

\documentclass[11pt,reqno]{amsart}

\usepackage{natbib}
\setcitestyle{square, comma, numbers,sort&compress, super}



\usepackage[a4paper,vmargin={3.5cm,3.5cm},hmargin={2.5cm,2.5cm},bottom=30mm]{geometry}


 \newtheorem{step}{Step}

\newtheorem{algorithm}{Algorithm}

\newcommand{\RR}{\mathbb{R}}
\newcommand{\NN}{\mathbb{N}}

\newcommand{\ZZ}{\mathbb{Z}}

\newcommand{\Bb}{\mathcal{B}}
\newcommand{\ee}{\mathrm{e}}

\newcommand{\De}{\mathrm{d}}


\usepackage{booktabs} 


\usepackage{array} 
\usepackage{paralist} 
\usepackage{verbatim} 
\usepackage{subfig} 
\usepackage[T1]{fontenc} 


\def\1{{\mathchoice {\rm 1\mskip-4mu l} {\rm 1\mskip-4mu l}
{\rm 1\mskip-4.5mu l} {\rm 1\mskip-5mu l}}}

\usepackage{pxfonts}
\usepackage{dsfont}



\usepackage{hyperref}
\hypersetup{ colorlinks=true, linkcolor=blue, citecolor=blue,
  filecolor=blue, urlcolor=blue}
\usepackage{tikz}



\DeclareMathOperator{\T}{\textbf{T}}
\DeclareMathOperator{\R}{\mathbb{R}}

\DeclareMathOperator{\Z}{\mathbb{Z}}
\DeclareMathOperator{\N}{\mathbb{N}}
\usepackage{mathtools}


\newcommand{\var}{\mathsf{Var}}

\renewcommand{\T}{\mathbb{T}}

\DeclareMathOperator{\e}{e}
\newcommand{\la}{\left\langle}
\newcommand{\ra}{\right\rangle}



\newcommand{\eq}[1]{\begin{equation#1}}
\newcommand{\eeq}[1]{\end{equation#1}}

\newtheorem{theorem}{Theorem}
\newtheorem{proposition}[theorem]{Proposition}

\newtheorem{lemma}[theorem]{Lemma}

\theoremstyle{definition}
\newtheorem{definition}{Definition}[section]

\usepackage{amssymb}
\numberwithin{equation}{section}

\title[Odometers of Divisible Sandpile Models: Scaling Limits, iDLA and Obstacle Problems. A Survey]{Odometers of Divisible Sandpile Models: Scaling Limits, iDLA and Obstacle Problems. A Survey}
\author[W.~M.~Ruszel]{Wioletta M.~Ruszel}
\address{TU Delft (DIAM), Building 28, van Mourik Broekmanweg 6, 2628 XE, Delft, The Netherlands}
\email{W.M.Ruszel@tudelft.nl}
\thanks{Acknowledgements: The author would like to thank Leandro Chiarini, Alessandra Cipriani and Rajat Hazra for useful comments on the previous versions of the draft and Jan de Graaff for the pictures.}

\begin{document}
\begin{abstract}
The divisible sandpile model is a fixed-energy continuous counterpart of the Abelian sandpile model. We start with a random initial configuration and redistribute mass deterministically. Under certain conditions the sandpile will stabilize. The associated odometer function describes the amount of mass emitted from each vertex during stabilization. In this survey we describe recent scaling limit results of the odometer function depending on different initial configurations and redistribution rules. Moreover we review connections to the obstacle problem from potential theory, including the connection between odometers and limiting shapes of growth models such as iDLA. Finally we state some open problems.
\end{abstract}
\keywords{Divisible sandpile, odometer function, fractional Gaussian fields, abstract Wiener space, iDLA, obstacle problems}
\subjclass[2000]{31B30, 60J45, 60G15, 82C20}
\maketitle

\tableofcontents

\section{Introduction}

The divisible sandpile model is a continuous height  and fixed energy counterpart of the Abelian sandpile model (ASM)  \citet{BTW}.  It was introduced by \citet{LevPer} to study scaling limits of two growth models: the rotor-router model and internal diffusion limited aggregation.
The model is interesting by itself and has further many nice connections to free boundary PDE, potential theory, algebraic geometry or computational complexity, see \citet{LapGrowth} and references therein. 

Consider a locally finite and undirected graph  $G=(V,E)$. A divisible sandpile configuration $s:V\rightarrow \R$ assigns to each $x\in V$ a \textit{height} $s(x)$ representing some \textit{mass} in case $s(x)>0$ and a \textit{hole} for $s(x)<0$.  At each time step, if $s(x)>1$, then redistribute the excess $(s(x)-1)^+$ among the neighbours. Repeat  for all such unstable vertices $x$. We will make two different choices on how to do that and capture the choice by $\Delta_V^{\star}$.
In the case where the excess is redistributed equally to nearest neighbours, we have $\Delta_V^{\star}$, where $\Delta_V^{\star}=\Delta$ the usual graph Laplacian. For the other case, consider a discrete fractional Laplacian $\Delta_V^{\star}=\Delta^{\alpha/2}$ capturing the redistribution to far away neighbours proportionally to the transition probabilities of a long-rang random walk. Note that once $s$ is fixed the evolution is deterministic. This model is Abelian in the sense that the final configuration does not depend on the toppling order (we can choose parallel toppling for simplicity).
Call
\[
u_n(x) = \text{total amount of mass emitted before time $n$ from $x$ to the neighbours}
\]
It is easy to see that $u_n$ is increasing in $n$ and hence will converge to some
\[
u:V\rightarrow [0,\infty].
\]
Define the following dichotomy: either for all $x\in V$, $u(x)<\infty$ so the divisible sandpile configuration will stabilize or $u(x)=\infty$ for all $x$ which means that $s$ explodes.  The first question one might ask : 
\begin{center}
Under which conditions on $s$ and $G$ does the sandpile stabilize?
\end{center}

\noindent
\textbf{Dichotomy between stabilization and explosion:}

When $|V|<\infty$ then it was shown in Lemma 7.1 of \citet{LMPU} that for any initial configuration $s$ satisfying $\sum_{x\in V}s(x)\leq |V|$ the sandpile will stabilize and explode otherwise. In the infinite volume case assume that $(s(x))_{x\in V}$ is a collection of i.i.d. random variables. Then it turns out that the mean or density in the physical sense (mass per unit volume) $\rho=\mathbb{E}(s(o))$ is the main parameter to determine stabilization. In Lemma 4.2. of \citet{LMPU} it was proven that indeed for $\rho<1$, the divisible sandpile stabilizes and in Lemma 4.1 for $\rho >1$ explodes almost surely and on the boundary case $\rho=1$ (Theorem 1.1.) the authors in \citet{LMPU} proved under a finite variance condition that $s$ does not stabilize a.s. This result was further extended to initial configurations with possibly no mean (Lemma 1) or variance (Theorem 2+3) in \citet{CHRheavy}.

Note the interesting fact that in the Abelian sandpile model, the density alone is not enough to determine whether the sandpile stabilizes or explodes. 
In fact, for $V=\Z^d$ there exist infinite volume measures $\mu$ with mean $\rho \in (\rho_c, 2d)$ such that $\mu$ is not stabilizable (has not probability one on configurations which will stabilize), compare Theorem 5.1 in \citet{AnneFrank}.
 
Recall that the ASM is a toy for model for self-organized criticality (SOC). A SOC system drives itself into a critical state (typically having power-law decay of correlations) without fine-tuning any parameters (temperature pressure, etc.).  
Let us point out the debate about  whether self-organized criticality intrinsically involves tuning a parameter, namely the density. It was claimed in a series of papers, see Section III of \citet{Vesp} and subsequent papers. In fact, the authors in \citet{VespDenCon} in Section II introduced the concept of a fixed-energy Abelian sandpile (FES) where mass is not lost at the boundary, contrary to the ASM.  They conjecture that the critical densities of a FES and ASM are the same. This would imply that the power-laws in ASM are coming from a real phase transition and not reminiscent of a SOC state. Although the critical values they found are close, it could be shown in large scale simulations that they are not the same, compare Theorem 1 of \citet{AnneDrive}. 

Given that we know the sandpile will stabilize on a finite graph, e.g. under the condition $\sum_{x\in V}s(x)\leq |V|$, another question is:
\begin{center}
What is the law of $u$ and it's scaling limit?
\end{center}

\noindent
\textbf{Law of the odometer and it's scaling limit:}

Consider the initial configuration $s$ given by
\[
s(x) = 1+ \sigma(x) - \frac{1}{n^d}\sum_{z\in V}\sigma(z).
\]
Then we have trivially that $\sum_{x\in V} s(x)= |V|$.
The collection  of random variables  $(\sigma(x))_{x\in V}$ will be either i.i.d. and satisfy a finite second moment assumption,  correlated Gaussian random variables or i.i.d. and in the domain of attraction of $\alpha$-stable random variables. Using the redistribution $\Delta_V^{\star}$ defined above note that $u$ satisfies
\[
\Delta_V^{\star} u = 1- s.
\]
We see that roughly
\[
\begin{split}
u &= (-\Delta_V^{\star})^{-1}\left (\sum_{x\in V} \sigma(x) - \frac{1}{|V|} \sum_{z\in V}\sigma(z) \right) \approx (\Delta_V^{\star})^{-1}\left (\sum_{x\in V} \sigma(x) \right ).
\end{split}
\]
Intuitively we can guess the scaling limit using the following considerations. Let $\sigma(x) \sim N(0,1)$ for all $x\in V$, independently, and $\alpha \in (0,2]$
then the authors in \citet{CHR17} in Theorem 1+2 and Theorem 3.4 of \citet{LongRange} prove that for $V=\Z^d_n$ the discrete torus of length $n$,
\[
(\Delta_V^{\star})^{-1}\left (\sum_{x\in V} \sigma(x) \right ) \longrightarrow \Delta^{- \alpha/2} W
\]
in law to the continuum fractional field on $\T^d$. $W$ denotes spatial white noise and the notation $\Delta^{- \alpha/2} W$ is adapted from \citet{LSSW} equation 1.1.
Special cases include: the Gaussian free field ($\alpha=1$) and the membrane model or bi-Laplacian field ($\alpha=2$). 
This convergence was first conjectured for a sub-case ($\sigma$'s having finite variance and $\Delta_V^{\star}=\Delta$ the usual graph Laplacian) in \citet{LMPU} after Proposition 1.3. It is notable that we cannot go beyond membrane models by playing with the long-range parameter $\alpha$. 

In fact, to obtain fractional Gaussian fields with regularity $\Delta^{-(1+\delta)} W$, for $\delta>0$ consider correlated initial Gaussian variables $(\sigma(x))_{x\in V}$ such that roughly
\[
(\Delta_V^{\star})^{-1}\left (\sum_{x\in V} \Delta^{-\delta} \sigma(x) \right ) \longrightarrow \Delta^{- (1+\delta)} W.
\]
This is a special case of the results in, Theorem 1 in \citet{JanFourier}, where the authors constructed a family of fractional fields specified by Fourier multipliers $\widehat{K}$ via defining an appropriate inverse covariance kernel for the Gaussian correlated random variables $(\sigma(x))_{x\in V}$. 

Can we go beyond Gaussianity? Yes, by imposing that $\sigma$'s will not satisfy a second moment condition but rather being in the domain of attraction of a $\alpha$-stable distribution. Then the authors proved in Theorem 5 of \citet{CHRheavy} that the limiting odometer field is in fact a $\alpha$-stable random field on $\T^d$. (Note that the parameter $\alpha$ from $\alpha$-stable distributions is not the same $\alpha$ which plays a role in the definition of a discrete fractional Laplacian and long-range random walk. It  will be clear from the context that they are not the same parameters.)\\

\noindent
\textbf{Obstacle problem and iDLA:}

An equivalent description of the odometer function can be achieved via solutions of an obstacle problem. Given an obstacle $\gamma$ satisfying $\Delta_V^{\star} \gamma = 1-s$ find a function $v$ such that $v(x)=\inf \{f(x)|f\geq \gamma, (-\Delta_V)^{\star}f \leq 0 \}$. Then the odometer is equal to
\[
u_{\infty} = v - \gamma.
\]
Note that the scaling limits obtained in Section \ref{sec:scale} are, by abuse of notation, scaling limits of the \textit{obstacle} and not the full odometer function. Define the formal field on the discretized torus:
\[
\Xi^u_n = \sum_{z\in \T^d_n} u^{\star}(z\cdot n) \1_{B(z,1/2n)}(x)
\]
for $x\in \T^d$.
However, the formal field given $\Xi^u_n$ whose convergence we study involves to take an inverse Laplacian resp. fractional Laplacian with zero eigenvalue. We need to remove it by imposing zero mean test functions $f$. Hence in law:
\[
\langle \Xi^u_n , f \rangle = \langle \Xi^{\gamma}_n , f \rangle
\]
where $\Xi_n^{\gamma}$ is the discretized field involving $\gamma$ and not $v-\gamma$. $v$ turns out to be related to the maximum of $\gamma$. One open question is what is the scaling limit of this maximum. Knowing the full scaling limit of the odometer allows to answer questions related to limiting shapes of growths models such as iDLA, rotor-router models and divisible sandpiles starting from a pile of $n$ chips at the origin. As we will explain in Section \ref{sec:iDLA}, the limiting shape will be related to determining the so-called non-coincidence set $\{x\in \R^d| v(x)>\gamma(x)\}$ for the obstacle problem.\\

The paper is organized as follows. In Section \ref{sec:not} we set the notation and preliminaries. We will introduce abstract Wiener spaces, sandpile models, stabilization and the reformulation of the odometer function in terms of solutions of an obstacle problem. The subsequent Section \ref{sec:law} will give an overview over results determining the law of the odometer on the discrete torus subjected to different redistribution rules and initial configurations. Section \ref{sec:scale} deals with scaling limit results for the Gaussian case and gives the main ideas of the proofs. In the following Section \ref{sec:stable} we state scaling limit results for the non-Gaussian case, when dealing with initial configurations being in the domain of attraction of $\alpha$-stable random variables. Section \ref{sec:iDLA} makes the connection between limiting shapes for growth models and non-coincidence sets for the odometer function. The last Section \ref{sec:dis} discusses open problems to explore.

\section{Notation and Preliminaries}\label{sec:not}

\subsection{Fourier analysis and abstract Wiener spaces}
\subsubsection{Fourier analysis on the torus and continuum fractional Laplacians}
\label{subsec-fourier-torus}
We will from now on assume that $V=\Z^d_n$.  Let $\T^d$ be the $d$-dimensional torus, alternatively  $\frac{\R^d}{\Z^d}$ or as $[-\frac12,\,\frac12)^d\subset\R^d$. The discrete torus with side length $n$ is denoted by $\Z_n^d:=[-\frac{n}{2},\,\frac{n}{2}]^d\cap \Z^d$. Finally $\T_n^d:=[-\frac12,\,\frac12]^d\cap (n^{-1}\Z)^d$ is the discretization of $\T^d$.  Moreover call $B(z,\,r)$ a ball centered at $z$ of radius $r>0$ in the $\ell^\infty$-metric. 
Define the inner product for  $\ell^2(\mathbb{Z}^d_n)$:

\[
	\langle {f,g} \rangle 
	=
	\frac{1}{n^d} \sum_{z \in \mathbb{Z}^d_n} 
	f(z) \overline{g(z)}.
\]
We will discuss two cases involving the classical graph Laplacian $\Delta_g$ and discrete fractional Laplacian $-(-\Delta_n)^{\alpha/2}$ for $\alpha \in (0,\infty)$.  
$\Delta_g$ is defined by
\[
\Delta_g f(x) = \frac{1}{2d}\sum_{\|y - x\|=1}\left (f(y) - f(x) \right )
\]
and
\begin{equation}\label{fracLap}
-(-\Delta_n)^{\alpha/2} f(x) = \left (\sum_{y\in \Z^d_n} f(y) p^{\alpha}_n(x-y) \right) - f(x),
\end{equation}
where $p_n^{\alpha}:\Z^d_n \times \Z^d_n \rightarrow [0,1]$ is the transition kernel for a long-range random walk defined by
\begin{equation}\label{def-random-walk-in-zdn}
	p^{(\alpha)}_n(0,x): =
	c^{(\alpha)} 
	\sum_{\substack{z \in  \mathbb{Z}^d\backslash \{0\} \\ z \equiv x 
	\!\!\!\!\! 
	\mod \mathbb{Z}^d_n }} \frac{1}{\|z\|^{d+\alpha}} ,
\end{equation}
where $c^{(\alpha)}=(\sum_{z \in  \mathbb{Z}^d \backslash \{0\}} \frac{1}{\|z\|^{d+\alpha}})^{-1}$ is the constant such that $\sum_{x \in \mathbb{Z}^d_n} p^{(\alpha)}_n(0,x) = 1$ and $x \equiv z \mod \mathbb{Z}^d_n$ denotes that $x_j \equiv z_j \mod n$ for all $j \in \{1,\dots,d\}$. With abuse of notation we will from now on write $p^{(\alpha)}_n(x):=p^{(\alpha)}_n(0,x)$.

Consider the Fourier basis given by the eigenfunctions of both Laplacians $\{\psi_w\}_{w\in \mathbb{Z}^d_n}$ with
\begin{equation}\label{def-fourier-basis-discrete}
	\psi_w(z)=\psi^{(n)}_w(z):=\exp \bigg(- 2\pi i z\cdot\frac{w}{n}\bigg).
\end{equation}

Given $f \in \ell^2(\mathbb{Z}^d_n)$, we define its discrete Fourier transform by
\[
	\widehat{f}(w) 
=
	\langle  f, \psi_{w}\rangle
= 
	\frac{1}{n^d}\sum_{z \in \mathbb{Z}^d_n} 
	f(z) \exp \bigg(-2\pi i z\cdot\frac{w}{n}\bigg)  
\]
for $w\in \mathbb{Z}^d_n$.

The eigenvalues   $\{\lambda_w\}_{w\in \mathbb{Z}^d_n}$ corresponding to the classical graph Laplacian are given by
\begin{equation}\label{eq:eigenvalues}
 \lambda_w:=-4\sum_{i=1}^d\sin^2\biggl(\frac{\pi w_i}{n}\biggr ),
\end{equation}
whereas for the discrete fractional Laplacian, see equation 4.2 in \citet{LongRange},
\[
    \lambda^{(\alpha,n)}_w = 
    -c^{(\alpha)} \frac{1}{n^{d+\alpha}} 
    \sum_{x \in \frac{1}{n} \mathbb{Z}^d \setminus \{0\}} 
    \frac{\sin^2(\pi x \cdot w )}{\|x\|^{d+\alpha}}, 
\]
where $c^{(\alpha)}$ is just the normalising constant of the associated long range-random walk in $\mathbb{Z}^d$. In equation 4.6 of \citet{JanFourier} we consider the following definition of discrete fractional Laplacian 
\[
(-\Delta_g)^{-a} f(\cdot) = \sum_{v\in \Z^d \setminus \{o\}} (-\lambda_v)^{-s} \widehat{f}(v)\psi_v(\cdot)
\]
for $f\in \ell^2(\Z^d_n)$.
Note that in contrast to \eqref{fracLap} there is no random walk representation.

Similarly, if $f, g \in L^2(\mathbb{T}^d)$  denote by
\[
	(f,g)_{L^2(\mathbb{T}^d)}:= \int_{\mathbb{T}^d} f(z) \overline{g(z)}
	\text{d}z
\]
the inner product.
Consider the Fourier basis $\{\phi_{\xi}\}_{\xi \in \mathbb{Z}^d}$ of $L^2(\T^d)$ given by 
$$\phi_{\xi}(x):=\exp(-2\pi i \xi \cdot x)$$ 
so
\[
	\widehat{f}(\xi): =(f,\phi_{\xi})_{L^2(\mathbb{T}^d)} = 
	\int_{\mathbb{T}^d} f(z) \ee^{- 2 \pi i \xi \cdot z} \text{d}z.
\]
It is important to notice that for $f \in C^\infty(\mathbb{T}^d)$, if we define $f_n: \mathbb{Z}^d_n \to \mathbb{R}$ by $f_n(z) := f( {z}/{n})$, then for all $\xi \in \mathbb{Z}^d$, $\widehat{f_n}(\xi) \to \widehat{f}(\xi)$
as $n\rightarrow \infty$. 

Finally, we write $C^\infty(\T^d)/\sim$ for the space of smooth functions modulo the equivalence relation of differing by a constant. 
\begin{definition}
Let $a\in \R$ we define the fractional operator $(-\Delta)^a$ as
\[
(-\Delta)^a f(\cdot) = \sum_{v\in \Z^d \setminus \{o\}} \|v\|^{2a} \widehat{f}(v) \phi_v(\cdot)
\]
for $f\in L^2(\T^d)$.
\end{definition}

Remark that
\[
(-\Delta)^a \phi_v(\cdot) = \| v\|^{4a} \phi_v(\cdot).
\]

Note that most results about continuum fractional Laplacians are obtained for $a \in (0,2)$, compare also different representations in Theorem 1 of  \citet{Kwasnicki15}.
The reason is that for $\alpha \in (0,2)$ and $f \in C^{\infty}(\mathbb{R}^d)$, $f(\cdot + e_i)=f(\cdot)$ for all $i =1, \dots,d$ and $\{e_i\}_{i=1}^d$  the canonical basis of $\mathbb{R}^d $ we can write
\begin{equation}\label{def-fractional-laplacian-1}
	-(-\Delta)^{\frac{\alpha}{2}}f(x)
:=\frac {2^{\alpha} \Gamma (\frac{d+\alpha}{2})}{\pi^{d/2}|\Gamma(-\frac{\alpha}{2})|}
	\int_{\mathbb{R}^d}\frac{f(x+y)+f(x-y)-2f(y)}{\|y\|^{d+\alpha}} \text{d} y,
\end{equation}
where the integral above is defined in the sense of principal value. The constant in front of the integral is chosen to guarantee that for  $\alpha,\beta \in (0,2)$ such that $\alpha +\beta <2$, we have $(-\Delta)^{\frac{\alpha}{2}}(-\Delta)^{\frac{\beta}{2}}f = (-\Delta)^{\frac{(\alpha+\beta)}{2}}f$ for all $f \in C^\infty(\mathbb{T}^d)$. 



\subsubsection{Abstract Wiener Spaces and continuum fractional Laplacians}\label{subsec:AWS}
Given a Fourier multiplier function  $\widehat{K}:\ZZ^d\to\RR_{>0}$ consider 
\begin{equation}\label{norm}
(f,g)_{K,a} := \sum_{\xi\in\ZZ^d\setminus\{0\}} \widehat{K}(\xi) \|\xi\|^{4a} \widehat{f}(\xi) \overline{\widehat{g}(\xi)}.
\end{equation}
We will make the following assumptions on $\widehat{K}$: $\widehat{K}$ is positive, even and real-valued.
It is straightforward to see that \eqref{norm} defines a proper inner product on $C^\infty(\T^d)/\sim$.
Define $H^a_K(\T^d)$ to be the Hilbert space completion of $C^\infty(\T^d)/\sim$ with respect to the norm $(\cdot,\cdot)_{K,a}=\|\cdot\|^2_{K,a}$. 
Recall the definition of abstract Wiener space (see~\S 8.2 in \citet{Str08}).

\begin{definition}\label{aws2}
A triple $(H,B,\mu)$ is called an abstract Wiener space (from now on abbreviated {AWS}) if
\begin{enumerate}
\item $H$ is a Hilbert space with inner product $(\cdot,\cdot)_H$.
\item $B$ is the Banach space completion of $H$ with respect to the measurable norm $\|\cdot\|_B$. Furthermore $B$ is supplied with the Borel $\sigma$-algebra $\Bb$ induced by $\|\cdot\|_B$.
\item $\mu$ is the unique probability measure on $B$ such that for all $\phi\in B^*$ we have $\mu\cdot\phi^{-1} = \mathcal{N}(0,\|\widetilde{\phi}\|_H^2)$, where $\widetilde{\phi}$ is the unique element of $H$ such that $\phi(h)=(\widetilde{\phi},h)_H$ for all $h\in H$.
\end{enumerate}
\end{definition}
In Lemma 3 and 4 in \citet{JanFourier} we proved the following:
\begin{lemma}
Let $\epsilon > d/4 - a$, $\widehat{K}$ a general positive Fourier multiplier and the Sobolev spaces $H^a_K$ w.r.t the  norm \eqref{norm} defined above. Then
the triple $(H^{a}_K,H^{-\epsilon}_K,\mu_{-\epsilon})$ is an AWS. 
\end{lemma}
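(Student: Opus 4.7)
The plan is to verify the three clauses of Definition~\ref{aws2} for the triple $(H^a_K, H^{-\epsilon}_K, \mu_{-\epsilon})$. Clause~(1) holds by construction. For clause~(2), the first step is to note that because $\|\xi\|\geq 1$ for all nonzero $\xi\in\ZZ^d$ and the standing hypothesis $\epsilon>d/4-a$ forces $a+\epsilon>0$, the termwise bound $\|\xi\|^{-4\epsilon}\leq \|\xi\|^{4a}$ yields a continuous inclusion $i:H^a_K\hookrightarrow H^{-\epsilon}_K$; combined with the density of $C^\infty(\T^d)/\sim$ in $H^a_K$, this identifies $H^{-\epsilon}_K$ with the Banach completion of $H^a_K$ under $\|\cdot\|_{K,-\epsilon}$. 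The substantive content is then that $\|\cdot\|_{K,-\epsilon}$ is a \emph{measurable} norm on $H^a_K$ in the sense of Gross, after which clause~(3) follows from the general abstract Wiener space construction (see \S 8.2 of \citet{Str08}).

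I would establish measurability through the standard Hilbert--Schmidt criterion. A direct computation gives $(\phi_\xi,\phi_\eta)_{K,a}=\widehat{K}(\xi)\|\xi\|^{4a}\delta_{\xi,\eta}$, so
\[
e_\xi \;:=\; \frac{\phi_\xi}{\sqrt{\widehat{K}(\xi)}\,\|\xi\|^{2a}},\qquad \xi\in\ZZ^d\setminus\{0\},
\]
is an orthonormal basis of $H^a_K$. Evaluating in the target norm yields $\|i(e_\xi)\|_{K,-\epsilon}^2 = \|\xi\|^{-4(a+\epsilon)}$, so
\[
\|i\|_{\mathrm{HS}}^2 \;=\; \sum_{\xi\in\ZZ^d\setminus\{0\}} \|\xi\|^{-4(a+\epsilon)},
\]
which converges exactly when $4(a+\epsilon)>d$, that is, precisely under the hypothesis $\epsilon>d/4-a$. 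Note that $\widehat{K}$ cancels entirely from this sum, so no condition on it beyond positivity enters the convergence.

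The main obstacle, such as it is, lies in invoking the correct form of Gross's theorem. Because the target $H^{-\epsilon}_K$ is itself Hilbert, the Hilbert--Schmidt embedding is precisely the sufficient (and in this Hilbert--Hilbert setting, essentially sharp) criterion for the canonical cylindrical Gaussian measure on $H^a_K$ to extend to a genuine Borel probability measure $\mu_{-\epsilon}$ on $H^{-\epsilon}_K$ whose characteristic functional on $(H^{-\epsilon}_K)^*$ reproduces the Hilbert norm on $H^a_K$; see e.g.\ Kuo's monograph or Stroock~\citet{Str08}. The only minor bookkeeping point is the exclusion of the zero mode, which is already built into the quotient $C^\infty(\T^d)/\sim$ used to define both spaces and so requires no extra care.
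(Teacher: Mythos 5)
Your argument is correct: the orthonormal basis computation, the cancellation of $\widehat{K}$, the Hilbert--Schmidt condition $\sum_{\xi\neq 0}\|\xi\|^{-4(a+\epsilon)}<\infty \iff \epsilon>d/4-a$, and the appeal to Gross's theorem for measurability of the norm are exactly the standard route. The survey itself omits the proof and defers to Lemmas 3 and 4 of the cited reference, where the argument is precisely this Hilbert--Schmidt embedding criterion, so your proposal follows essentially the same approach.
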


The measure $\mu_{-\epsilon}$ is the unique Gaussian law on $H^{-\epsilon}_K$ such that the characteristic functional is given by
\begin{equation}\label{chara}
\Phi(f) = \exp \left ( -\frac{1}{2} \| f \|^2_{K,a}\right )
\end{equation}
for a test function $f \in \mathcal{F} =  \left\{C^{\infty}(\T^d); \int_{\T^d} f(x) dx =0 \right \}$. 

\begin{definition}
A Gaussian random field $\Xi^K$ in $H^a_K(\T^d)$ is a collection $\{\langle \Xi^K, f  \rangle; f\in \mathcal{F}\}$ of random variables such that $\mathbb{E}(\langle \Xi^K, f \rangle^2 ) = \| f||^2_{K,a}$.
\end{definition}

Note that we wrote $\Xi^K$ to indicate the dependence of a possible Fourier multiplier and the corresponding Sobolev space $H^{a}_{K}$. We will sometimes just write $\Xi$ or $\Xi^{\star}$ for $\Xi^K$, it will clear from the context what we will mean by that.
 

\subsection{Divisible sandpiles, stabilization and obstacle problem}
In this section we would like to define the divisible sandpile model, state the dichotomy between stabilization and explosion and finally make a connection between the odometer function and solutions to obstacle problems. 

\subsubsection{Divisible sandpiles and odometers}
	
\begin{definition}
A \textit{divisible sandpile configuration} $s$ is a function $s:\mathbb{Z}^d_n \rightarrow \mathbb{R}$. 
\end{definition}

For $x \in \mathbb{Z}^d_n$, if $s(x)\geq 0$, we can interpret $s(x)$ as a \textit{mass} ($s(x)>0$) or \textit{hole} ($s(x)<0$) on the site $x$. If $s(x) > 1$, we call it \textit{unstable} and otherwise \textit{stable}. We then evolve the sandpile according to the following dynamics: unstable vertices will \text{topple} by keeping mass $1$ and redistributing the excess $(s(x)-1)^+$ over the other neighbouring vertices. We will distinguish between two different distribution rules:\\

\noindent
\textbf{Distribution rules:}
\begin{itemize}
\item[(n.n.):] the excess is redistributed according to the transition probabilities of a simple random walk (equally to all nearest neighbours)
\item[(l.r.):] the excess is redistributed according to the transition probabilities $p^{(\alpha)}_n(\cdot)$, see \eqref{def-random-walk-in-zdn}, of a long-range random walk at each discrete time step.
\end{itemize} 

Note that unstable sites in long-range divisible sandpile models distribute mass to \textit{all} vertices (including itself). One could generate a divisible sandpile on  a graph from any random walk defined on it, where on each time step the mass which in each vertex sent to its neighbours is proportional to the transition probabilities. Let us write $(-\Delta)^{\star} \in \left \{ (-\Delta_g),  (-\Delta)_n^{\alpha/2} \right \}$ for denoting a general graph Laplacian which defines the redistribution. We will specify, when needed, the redistribution rule. 

Let $s_t=(s_t(x))_{x\in \mathbb{Z}^d_n}$ denote the sandpile configuration after $t\in \mathbb{N}$ discrete time steps (set $s_0:=s$ the initial configuration).
Most of the times we will use parallel toppling, which we can define via an algorithm in the following way:
\begin{algorithm}\label{alg-divisible-long-range}
Set $t=1$ then run the following loop:
\begin{enumerate}
	\item if $\max_{x \in \mathbb{Z}^d_n} s_{t-1}(x) \le 1$, stop the algorithm;
	\item for all $x \in \mathbb{Z}^d_n$, set $e_{t-1}(x):=(s_{t-1}(x)-1)^+$;
	\item set $s_t(x):=s_{t-1} - (-\Delta)^{\star}_n e_{t-1}(x) $;
	\item increase the value of $t$ by $1$ and go back to step $1$.
\end{enumerate}
\end{algorithm}

Let us illustrate the algorithm in the nearest neighbour case on a simple example.

\begin{figure}[htb]
\includegraphics[scale=1]{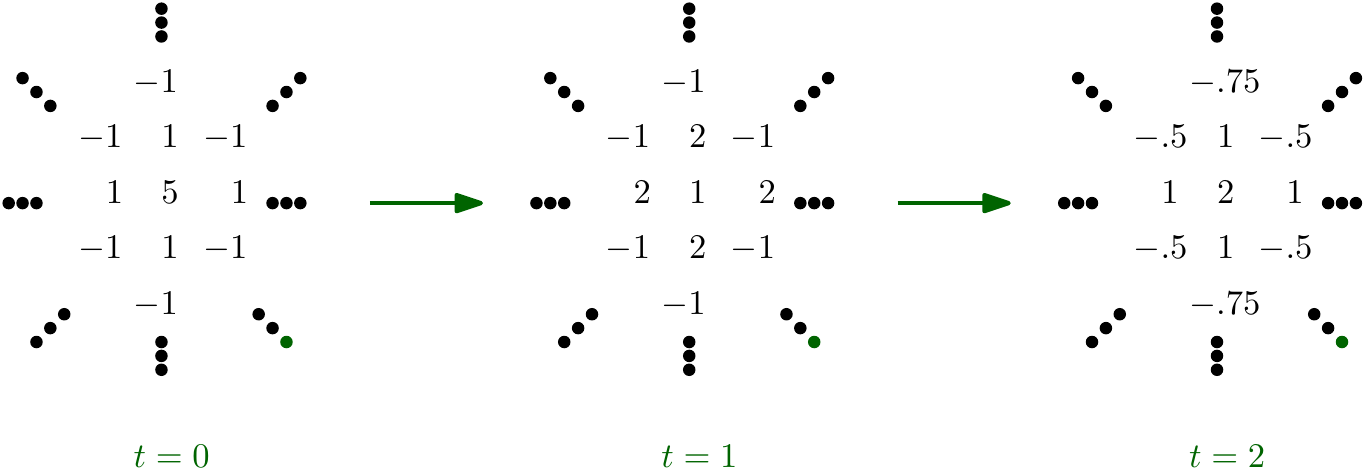}
\caption{Example of a toppling sequence }
\label{fig:example_top}
\end{figure}

We zoom into a local neighbourhood of an initial configuration with the displayed height configuration at $t=0$ in Figure \ref{fig:example_top}. In the next time step we topple $s(x)=5$ and redistributed the excess equally among the nearest neighbours. At $t=2$ we topple in parallel the 4 unstable neighbours $y$ such that $s(y)=2$ of $x$. What we observe is that the holes (``negative heights" ) will slowly fill up and the heights with large mass decrease.

\begin{definition}
The odometer function is equal to $u^{\star}_t: \mathbb{Z}^d_n \rightarrow [0,\infty]$, where $u^{\star}_t(x)$ denotes the total mass emitted from $x$ up to time $t$, that is $u^\star_t(x):= \sum_{i=0}^{t-1} e_{i}(x)$. The superscript $\star \in \{ \text{n.n.,  l.r.}\}$ will combine the nearest neighbour and long-range case.
\end{definition}

Note that analogously to (Section 2 in) \citet{LMPU} we have pointwise:
\begin{equation}\label{equilibrium-equation-long-range-divisible-sandpile}
	s_{t}(x) 
= 
	s(x) - (-\Delta)^{\star} u_{t}(x).
\end{equation}

By construction, the odometer is an increasing function in $t$, call it $u^{\star}_{\infty}:=\lim_{t\rightarrow \infty} u^{\star}_t$. 
The model is Abelian in the sense that the final configuration will not depend on the toppling order.

\subsubsection{Stabilization}

As in Section 1 of \citet{LMPU} and Section 2 of  \citet{LongRange} we define the following dichotomy: either for all $x\in \mathbb{Z}^d_n$ we have \textit{stabilisation}, i.e. $u^{\star}_{\infty}(x)<\infty$ or \textit{explosion}, i.e.  for all $x\in \mathbb{Z}^d_n:$ $u^{\star}_{\infty}(x)=\infty$.
Let us first consider more general toppling procedures, see Definition 2.2 \citet{LongRange} which was adapted from Definition 2.1 of \citet{LMPU}.

\begin{definition}\label{def-top-procedure}
	Let $T \subset [0,\infty)$	be a well-ordered set of toppling times such that
	$0 \in T$, $T$ is closed subset of $[0,\infty)$. A \textit{toppling procedure} is
	a function
	\begin{align*}
		T \times \mathbb{Z}^d_n &\longrightarrow [0,\infty)\\
		(t,x)& \longmapsto u^{\star}_t(x)
	\end{align*}
	such that for all $x \in \mathbb{Z}^d_n$
	\begin{enumerate}
		\item $u^{\star}_0(x) = 0$;
		\item $u^{\star}_{t_1}(x)\le u^{\star}_{t_2}(x)$ for all $t_1 \le t_2$; and
		\item if $t_n \uparrow t$, then $u^{\star}_{t_n}(x)\uparrow u^{\star}_t(x)$.
	\end{enumerate}
\end{definition}

	Given a toppling procedure $u^{\star}$, we say that it is \textit{legal} for the 
	initial configuration $s$ if 
	\[
		u^{\star}_{t}(x)-u^{\star}_{t^-}(x) \le (s_{t^-}(x)-1)^+,
	\]
	where $t^-:= \sup \{ r \in T: r < T \} \in T$ (as $T$ is closed), and \textit{finite} if 
	\[
		u^{\star}_{\infty}(x):= \lim_{t \to \sup T} u^{\star}_t(x) <\infty.
	\]
	The limit is always well defined due the monotonicity of $u^{\star}$.

If $u^{\star}$ is finite, then we have that 
\[
	s_\infty := \lim_{t \to \sup T} s_t
	= s - \lim_{t \to \sup T} (-\Delta)^{\star}u^{\star}_\infty
\]
is well defined and it is equal to $s -(-\Delta)^{\star}u^\star_\infty$.

\begin{definition}\label{def-stab-top-procedure}
	Given a toppling procedure $u^{\star}$, we say that it is \textit{stabilising} for $s$ 
	if $u^{\star}$ is finite and $s_\infty \le 1$ pointwise. We say that $s$ 
	\textit{stabilises} if there exists a stabilising toppling procedure $u^{\star}$ for 
	$s$.
\end{definition}

It is clear that, if $s$ does not stabilize, then $u^{\star}=\infty$.  When $s$ stabilizes, then its odometer $u^{\star}_{\infty}(x)$ is the total amount of mass sent from $x$ to its neighbours in any legal stabilizing toppling procedure for $s$.

The following proposition is also true for general finite connected graphs $V$, we will state it here for the particular case when $V=\Z^d_n$.

\begin{proposition}\label{prop:stab}
Let $s:\Z^d_n\rightarrow \R$ be any initial sandpile configuration satisfying $\sum_{x\in \Z^d_n} s(x)=n^d$ (*). Then $s$ stabilizes to the all 1 configuration and its odometer $u^{\star}$ is the unique function satisfying
\[
\begin{cases}
& s-(-\Delta)^{\star} u^{\star} = 1\\
& \min_{x\in \Z^d_n} u^{\star}(x) =0.
\end{cases}
\]
\end{proposition}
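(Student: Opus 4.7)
The plan is to separately establish uniqueness of any $u^\star$ satisfying the two bullet conditions, construct such a function as the solution of a discrete Poisson equation, and then identify it with the actual odometer by invoking a least action principle. First I would observe that, in both the nearest-neighbour and long-range cases, $(-\Delta)^\star$ is a symmetric positive semidefinite operator on $\ell^2(\Z^d_n)$ whose kernel is the one-dimensional space of constants (since the underlying random walk on the torus is irreducible). Its range is therefore the orthogonal complement, namely the mean-zero functions, and condition (*) is precisely what is needed to make the discrete Poisson equation $(-\Delta)^\star f = s-1$ solvable. Any two solutions differ by an additive constant, so the constraint $\min_x f = 0$ selects a unique non-negative solution $v$. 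This already yields uniqueness, conditional on eventually identifying $u^\star_\infty$ with $v$.

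Next I would use $v$ as a stabilizing certificate: since $s-(-\Delta)^\star v = 1 \le 1$, toppling each vertex $x$ by exactly $v(x)$ mass stabilizes $s$ in the sense of Definition \ref{def-stab-top-procedure}. By the least action principle for divisible sandpiles (embedded in the proof of Lemma 7.1 of \citet{LMPU} and extended in Section 2 of \citet{LongRange} to the long-range setting), every legal toppling procedure in the sense of Definition \ref{def-top-procedure} is dominated pointwise by any such stabilizing profile. Applied to the parallel procedure of Algorithm \ref{alg-divisible-long-range}, this yields $u^\star_t(x)\le v(x)$ for all $t$ and $x$, so the monotone limit $u^\star_\infty$ is finite and $s$ stabilizes.

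To finish, I would combine mass conservation with the kernel structure of $(-\Delta)^\star$. Since each row of $(-\Delta)^\star$ sums to zero, $\sum_x (-\Delta)^\star u^\star_\infty(x) = 0$, whence $\sum_x s_\infty(x) = \sum_x s(x) = n^d$. Together with the pointwise bound $s_\infty \le 1$ from stabilization and $|\Z^d_n|=n^d$, this forces $s_\infty \equiv 1$, i.e.\ $s-(-\Delta)^\star u^\star_\infty = 1$. Then $(-\Delta)^\star(v-u^\star_\infty)=0$, so $v-u^\star_\infty$ is a constant $c$; using $u^\star_\infty \le v$ pointwise, $u^\star_\infty \ge 0$ and $\min v = 0$, one must have $c=0$, hence $u^\star_\infty = v$ and in particular $\min u^\star_\infty = 0$. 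The main obstacle is the least action principle used above: it is what turns algebraic solvability of the Poisson equation into a dynamical convergence statement for Algorithm \ref{alg-divisible-long-range}; once granted, the remainder is routine bookkeeping with mass conservation and the one-dimensionality of $\ker(-\Delta)^\star$.
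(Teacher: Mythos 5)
Your proposal is correct and follows essentially the same route as the paper's own sketch: solve the discrete Poisson equation $(-\Delta)^{\star}f = s-1$ using the fact that $\ker(-\Delta)^{\star}$ is the constants (equivalently, $\mathrm{rank} = n^d-1$) so that (*) guarantees solvability, shift by the minimum to get a non-negative stabilizing candidate, and identify it with the odometer via the least action principle. You in fact supply slightly more detail than the paper (the mass-conservation argument forcing $s_\infty\equiv 1$ and the explicit uniqueness step), but the underlying argument is the same.
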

We will sketch the proof which can be found in Lemma 7.1. in \citet{LMPU} or Proposition 3.2 of \citet{ LongRange}. Note that we do not assume anything on the initial configuration, besides  (*), and on the redistribution specified by $\Delta^{\star}$.
\begin{proof}(Sketch.)
First we notice that the the rank of the operator $\Delta^{\star}$ is $rank(\Delta^{\star})=n^d-1$. Then there exists $h:\Z^d_n \rightarrow \R$ such that
\[
-(-\Delta)^{\star} h =s-1.
\]
 Set $f=h-\min h$ then $f\geq 0$ and $s-(-\Delta)^{\star} f=1$ so $f$ stabilizes. By Proposition \ref{prop-least-action} (see next section) the smallest of such solutions is the odometer with minimum equal to 0.
\end{proof}

\subsubsection{Least action principle and the obstacle problem}

The next Proposition 2.5. from \citet{LMPU} is a variational characterization of the odometer function. We rewrote it in terms of our general Laplacian $\Delta^{\star}$.

\begin{proposition}[Least action principle]\label{prop-least-action}
	Let $s \in \mathbb{R}^{\mathbb{Z}^d_n}$, consider
	\[
		\mathcal{F}_{s} :=
		\Big\{
			f: \mathbb{Z}^d_n \longrightarrow \mathbb{R}:
			f \geq 0, s  -(-\Delta)^{\star}f \le 1
		\Big\}.
	\]
	Let $\ell$ be any legal toppling procedure for $s$. We have
	\begin{enumerate}
		\item For all $f \in \mathcal{F}_{s}$ and $x \in \mathbb{Z}^d_n$
			\[
				\ell_\infty \le f;
			\]
		\item for all $u^{\star}$ stabilising toppling procedure for $s$ and $x \in \mathbb{Z}^d_n$
			\[
				\ell_\infty \le u^{\star}_\infty; \text{ and}
			\]
		\item for all $u^{\star}$ legal stabilising toppling procedure for $s$, then for all
		$x \in \mathbb{Z}^d_n$  
		\begin{equation}\label{odoLast}
			u^{\star}_\infty(x) = \inf \{f(x): f \in \mathcal{F}_{s} \} ,
		\end{equation}
		therefore, both $u^{\star}_\infty$ and $s_\infty$ do note depend on the 
		legal choice of the legal  stabilising toppling procedure.
	\end{enumerate}
\end{proposition}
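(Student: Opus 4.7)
The three claims reduce to the first. If $u^\star$ is a stabilising toppling procedure, then $u^\star_\infty \ge 0$ by monotonicity from $u^\star_0 \equiv 0$, and $s - (-\Delta)^\star u^\star_\infty = s_\infty \le 1$ by the stabilisation property together with \eqref{equilibrium-equation-long-range-divisible-sandpile}. Hence $u^\star_\infty \in \mathcal{F}_s$; applying (1) with $f = u^\star_\infty$ gives (2). For (3), applying (1) with $\ell = u^\star$ yields $u^\star_\infty \le f$ for every $f \in \mathcal{F}_s$, and since $u^\star_\infty$ itself lies in $\mathcal{F}_s$, it realises the infimum pointwise. I will therefore concentrate the proof on (1).

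My plan for (1) is transfinite induction on the well-ordered set $T$, showing $\ell_t \le f$ pointwise at every stage. The base case $t=0$ is immediate from $\ell_0 \equiv 0 \le f$, and at a limit point of $T$ condition (3) of Definition \ref{def-top-procedure} ($t_n \uparrow t \Rightarrow \ell_{t_n}(x) \uparrow \ell_t(x)$) propagates the bound through the supremum. The successor step carries the whole content, and is where the maximum-principle flavour of $(-\Delta)^\star$ enters. Assume $\ell_{t^-} \le f$ and set $g := f - \ell_{t^-} \ge 0$. Fix $x$ and suppose for contradiction that $\ell_t(x) > f(x)$; in particular $\ell_t(x) > \ell_{t^-}(x)$, so the legality condition forces the upper bound on the increment to be positive and gives
\[
0 < \ell_t(x) - \ell_{t^-}(x) \le (s_{t^-}(x)-1)^+ = s(x) - (-\Delta)^\star \ell_{t^-}(x) - 1.
\]
Combining this with the membership condition $s - 1 \le (-\Delta)^\star f$ defining $\mathcal{F}_s$ yields
\[
g(x) < \ell_t(x) - \ell_{t^-}(x) \le (-\Delta)^\star f(x) - (-\Delta)^\star \ell_{t^-}(x) = (-\Delta)^\star g(x).
\]

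To close the step I would invoke the key structural fact that both candidates for $(-\Delta)^\star$ in the setup are of the form $I - P$, where $P$ is a stochastic transition kernel: the simple-random-walk kernel in the n.n.\ case and $p_n^{(\alpha)}$ from \eqref{def-random-walk-in-zdn} in the l.r.\ case. The displayed inequality then reads $g(x) < g(x) - (Pg)(x)$, i.e.\ $(Pg)(x) < 0$; but $g \ge 0$ everywhere and $P$ has non-negative entries, so $(Pg)(x) \ge 0$, a contradiction. This closes the induction and hence (1). The main obstacle I anticipate is precisely this successor step: extracting the strict comparison $(-\Delta)^\star \ell_{t^-}(x) < (-\Delta)^\star f(x)$ from legality together with $f \in \mathcal{F}_s$, and then exploiting the probabilistic representation of the Laplacian as $I - P$ to contradict $g \ge 0$ pointwise at the offending site. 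Everything else---base case, passage through limit points of $T$, and the deduction of (2) and (3)---is bookkeeping around this single pointwise inequality.
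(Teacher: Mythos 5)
Your argument is correct: the reduction of (2) and (3) to (1), the transfinite induction over the well-ordered toppling set $T$, and the successor step combining legality, the defining inequality $s-1\le(-\Delta)^{\star}f$ of $\mathcal{F}_s$, and the representation $(-\Delta)^{\star}=I-P$ with $P$ a nonnegative stochastic kernel to force the contradiction $(Pg)(x)<0$ with $g\ge 0$, all go through. The survey itself states this proposition without proof (citing Proposition 2.5 of \citet{LMPU}), and your proof is essentially the standard argument given there, so there is nothing further to reconcile.
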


The variational characterisation \eqref{odoLast} has an equivalent formulation summarized in the following lemma:

\begin{lemma}[Lemma 2.2. in \citet{LapGrowth}]\label{odo_lev}
Let $\gamma:\Z^d \rightarrow \R$ satisfy $(-\Delta)^{\star} \gamma = s_0-1$. Then the odometer of $u$ in \eqref{odoLast} is given by
\[
u=v-\gamma
\]
where $v(x) = \inf \{ f(x)|f\geq \gamma; (-\Delta)^{\star}f \leq 0 \}$.
\end{lemma}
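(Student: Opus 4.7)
The plan is to reduce the lemma to the variational identity \eqref{odoLast} from the least action principle, Proposition~\ref{prop-least-action}, which already characterises the odometer as
\[
u(x)=\inf\{f(x):\ f\geq 0,\ (-\Delta)^{\star} f\geq s_0-1\}.
\]
My strategy is to exhibit the affine shift $\Phi:f\mapsto f+\gamma$ as an order-preserving bijection between the admissible class $\mathcal{F}_{s_0}=\{f\geq 0:\ (-\Delta)^{\star} f\geq s_0-1\}$ of the least action principle and the obstacle class $\mathcal{O}_\gamma=\{h:\ h\geq\gamma,\ (-\Delta)^{\star} h\leq 0\}$. The defining relation $(-\Delta)^{\star}\gamma=s_0-1$ is precisely engineered to cancel the right-hand side $s_0-1$ of the Laplacian constraint in $\mathcal{F}_{s_0}$ when one passes to $h=f+\gamma$, while the nonnegativity $f\geq 0$ transforms into $h\geq\gamma$ automatically, and vice versa under $\Phi^{-1}:h\mapsto h-\gamma$.

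The verification splits into two routine components. First, one checks that $\Phi$ and $\Phi^{-1}$ respect both constraints by expanding $(-\Delta)^{\star}(f+\gamma)=(-\Delta)^{\star} f+(-\Delta)^{\star}\gamma$ and invoking the hypothesis on $\gamma$. Second, one checks that the pointwise infimum of $\mathcal{O}_\gamma$ lies in $\mathcal{O}_\gamma$ (and the analogous statement for $\mathcal{F}_{s_0}$), so that the infima in question are genuinely attained inside the respective feasible classes; this is the standard lattice property of obstacle-type feasibility sets, namely that the pointwise minimum of two admissible functions remains admissible because both defining inequalities are local at each vertex. Once the bijection is in place, since $\gamma$ is a fixed function the shift is additive and order-preserving, and taking pointwise infima yields
\[
v(x)=\inf_{h\in\mathcal{O}_\gamma}h(x)=\gamma(x)+\inf_{f\in\mathcal{F}_{s_0}}f(x)=\gamma(x)+u(x),
\]
which is the desired identity $u=v-\gamma$.

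The main obstacle is the careful sign bookkeeping between the two variational problems: the entire argument rests on the algebraic fact that $(-\Delta)^{\star}\gamma$ hits the right-hand side of the $\mathcal{F}_{s_0}$-constraint in just the right way to convert it into the obstacle constraint $(-\Delta)^{\star} h\leq 0$, and similarly in reverse. Once this cancellation is secured, the remainder is essentially formal and amounts to translating an obstacle problem with non-trivial obstacle $\gamma$ into an odometer problem with zero obstacle and Laplacian source $s_0-1$, a classical potential-theoretic dictionary (compare the continuum version in \citet{LapGrowth}) whose discrete analogue here reduces to tracking a single additive shift through two pointwise inequality constraints.
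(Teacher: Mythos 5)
Your strategy --- transporting the least--action characterisation \eqref{odoLast} through the affine shift $f\mapsto f+\gamma$ --- is exactly the intended route: it is how Lemma 2.2 of \citet{LapGrowth} is proved, and it is the argument this survey sketches again in Section \ref{sec:iDLA}. The problem is that the one step you yourself single out as the crux, the ``sign bookkeeping'', is asserted rather than carried out, and if you carry it out with the conventions of this paper it does not close. Here $(-\Delta)^{\star}$ is the positive semidefinite operator (the negative of the averaging-minus-identity Laplacian $\Delta_g$), and membership in $\mathcal{F}_{s_0}$ reads $(-\Delta)^{\star}f\geq s_0-1$, as you correctly record. But then, with the hypothesis $(-\Delta)^{\star}\gamma=s_0-1$ taken literally, the shift gives
\[
(-\Delta)^{\star}(f+\gamma)=(-\Delta)^{\star}f+(s_0-1)\geq 2(s_0-1),
\]
so the source term doubles instead of cancelling, and you do not land in the class $\{h\geq\gamma:\ (-\Delta)^{\star}h\leq 0\}$. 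The correspondence only works under one of two equivalent corrections: either the obstacle satisfies $(-\Delta)^{\star}\gamma=1-s_0$ and the admissible class is $\{h\geq\gamma:\ (-\Delta)^{\star}h\geq 0\}$ (i.e.\ $h$ superharmonic for $\Delta_g$, consistent with the remark after the lemma that the solution should have no local minima), or one reads every occurrence of $(-\Delta)^{\star}$ in the lemma as $\Delta_g$ itself, which is the convention of \citet{LapGrowth}. You need to adopt one of these readings explicitly (or flag the inconsistency in the statement); as written, the pivotal identity on which you say ``the entire argument rests'' is false.

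Two smaller points. First, your step about the pointwise infimum of $\mathcal{O}_\gamma$ lying in $\mathcal{O}_\gamma$ is not needed: once $\mathcal{O}_\gamma=\gamma+\mathcal{F}_{s_0}$ as sets, the identity $\inf_{h\in\mathcal{O}_\gamma}h(x)=\gamma(x)+\inf_{f\in\mathcal{F}_{s_0}}f(x)$ holds pointwise for each fixed $x$ whether or not the infima are attained, so the lattice/closure discussion can be dropped. Second, to identify $u^{\star}_\infty$ with $\inf\{f(x):f\in\mathcal{F}_{s_0}\}$ via Proposition \ref{prop-least-action}(3) you must know that a legal stabilising toppling procedure exists; under the standing assumption (*) this is supplied by Proposition \ref{prop:stab}, and the proof should say so rather than invoke \eqref{odoLast} unconditionally.
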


The function $\gamma$ is also called the \textit{obstacle} and $v$ (defined above) the \textit{solution to the obstacle problem}.
For a fixed \textit{obstacle surface} $\gamma$ we want to find a graph $f$ which stays above the obstacle and is superharmonic at the same time, meaning in particular that $f$ has no local minima. The solution will be the lowest possible $f$ satisfying the conditions.

\section{Law of the odometer on the discrete torus and discrete fractional Gaussian fields}\label{sec:law}

\subsection{Law of the odometer on the discrete torus}
From Proposition \ref{prop:stab} it is clear that as long as the initial sandpile configuration $s$ satisfies the constraint (*), the sandpile will stabilize to the all 1 configuration. The simplest choice of an initial configuration $(\sigma(x))_{x\in \Z^d_n}$ satisfying (*) is if we take for all $x\in \Z^d_n$:

\begin{equation}\label{ini_s}
s(x) = 1 + \sigma(x) -\frac{1}{n^d} \sum_{y\in \Z^d_n} \sigma(y).
\end{equation}

Note that at this point we do not make any assumptions on the law of $(\sigma(x))_{x\in \Z^d_n}$.
We will discuss two different cases, the i.i.d. case and non-i.i.d. case.\\

\noindent
\textbf{Assumptions on the $\sigma$'s:}
\begin{enumerate}
\item[(A-ind):]  $(\sigma(x))_{x\in \Z^d_n}$ are i.i.d. random variables with $\mathbb{E}(\sigma(x))=0$ and $\var(\sigma(x))=1$.
\item[(A-cor):] $(\sigma(x))_{x\in \Z^d_n}$ are multivariate centered Gaussian variables with stationary covariance $\mathbb{E}(\sigma(x)\sigma(y)) = K_n(x-y)$.
\end{enumerate}
In inverse Fourier transforms of positive, real Fourier multipliers $\widehat{K}_n$ on $\Z^d_n$ correspond to a positive definite function $(x,y) \mapsto K_n (x-y)$ on $\Z^d_n\times \Z^d_n$. 
In Lemma 5 in \citet{JanFourier} we proved an analog of Bochner's theorem, which we will recall for completeness.

\begin{lemma}
The map $(x,y) \mapsto K_n(x,y)=K_n (x-y)$ on $\Z^d_n \times \Z^d_n$ is positive definite, hence a well-defined covariance matrix if and only if the corresponding Fourier multipliers $\widehat{K}_n$ are real-valued, even and positive.
\end{lemma}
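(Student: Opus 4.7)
My plan is to establish this as a finite-group version of Bochner's theorem, exploiting the fact that translation-invariant kernels on the finite abelian group $\Z^d_n$ are simultaneously diagonalised by the character basis $\{\psi_w\}_{w\in \Z^d_n}$ from \eqref{def-fourier-basis-discrete}. The algebraic identity $\psi_w(x-y)=\psi_w(x)\overline{\psi_w(y)}$ is the engine of the whole argument: it converts any double sum against $K_n(x-y)$ into a sum of squares.

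For the direction $(\Leftarrow)$, I would assume $\widehat{K}_n$ is real, even, and strictly positive and recover $K_n$ by Fourier inversion as $K_n(z)=\sum_w \widehat{K}_n(w)\psi_w(z)$. Evenness of $\widehat{K}_n$ combined with $\psi_w(-z)=\psi_{-w}(z)$ gives $K_n(-z)=K_n(z)$, and combining this with realness of $\widehat{K}_n$ yields that $K_n$ is real-valued. Positive definiteness then follows from
\begin{align*}
\sum_{x,y\in\Z^d_n} c(x)\overline{c(y)}\,K_n(x-y)
 &= \sum_w \widehat{K}_n(w)\sum_{x,y\in\Z^d_n}c(x)\overline{c(y)}\,\psi_w(x)\overline{\psi_w(y)}\\
 &= \sum_w \widehat{K}_n(w)\Bigl|\sum_{x\in\Z^d_n} c(x)\psi_w(x)\Bigr|^{2} \ge 0,
\end{align*}
which is strictly positive for $c\not\equiv 0$ since $\widehat{K}_n>0$ and the characters $\{\psi_w\}$ form an orthonormal basis of $\ell^2(\Z^d_n)$, so at least one inner Fourier coefficient is non-zero.

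For the direction $(\Rightarrow)$, assume $(x,y)\mapsto K_n(x-y)$ is a real symmetric positive-definite covariance matrix. Directly from the definition of the discrete Fourier transform, $K_n$ being real-valued is equivalent to $\widehat{K}_n(-w)=\overline{\widehat{K}_n(w)}$, while symmetry $K_n(z)=K_n(-z)$ is equivalent to $\widehat{K}_n(-w)=\widehat{K}_n(w)$; combining the two forces $\widehat{K}_n$ to be both even and real-valued. To extract positivity, I would plug the test vector $c(x)=\psi_w(x)$ into the positive-definiteness inequality and carry out the same computation as in the forward direction; the double sum collapses to $n^{2d}\widehat{K}_n(w)\ge 0$, giving the required sign, with strict positivity corresponding to strict positive-definiteness of $K_n$.

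No step is genuinely difficult; the only point requiring care is the bookkeeping of the Fourier conventions of \S\ref{subsec-fourier-torus}, in particular the normalisation $\widehat{f}(w)=n^{-d}\sum_z f(z)\overline{\psi_w(z)}$ and the identity $\overline{\psi_w}=\psi_{-w}$, so that the sign of $w$ is tracked correctly in the evenness and realness identities.
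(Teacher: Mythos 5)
Your proof is correct: the diagonalisation of the translation-invariant kernel in the character basis via $\psi_w(x-y)=\psi_w(x)\overline{\psi_w(y)}$, giving $\sum_{x,y}c(x)\overline{c(y)}K_n(x-y)=\sum_w\widehat{K}_n(w)\bigl|\sum_x c(x)\psi_w(x)\bigr|^2$ in one direction and testing against the characters themselves in the other, is exactly the standard finite-group Bochner argument. The survey itself does not reproduce a proof (it defers to Lemma~5 of the cited reference), but your argument is the one that result rests on, and your handling of the evenness/realness bookkeeping under the paper's Fourier conventions is accurate.
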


Define the Green's function on the torus as

\begin{equation}
	g^{\star}(x,y)
=
	\frac{1}{n^d} \sum_{z \in \mathbb{Z}^d_n }g^{\star}_z(x,y) 
\end{equation}
and $g^{\star}_z(x,y)=\mathbb{E}_x[\sum_{t=0}^{\tau_z-1} \1_{\{X^{\star}_t=y \}}]$ is the expected number of visits to $y$ by a random walk $(X^{\star}_t)_{t\in \N}$ starting from $x$ before being killed at $z$. Recall,  we write $(X^{\star}_t)_{t\in \N}$ for $\star \in \{ n.n., l.r. \}$ so if $\star=n.n.$ then we mean the simple random random walk and if $\star=l.r.$ we will consider long-range random walk with transition probabilities specified by \eqref{def-random-walk-in-zdn}. 

The following Proposition \ref{prop:dislaw} summarizes the laws of the odometers for different distribution rules and initial conditions. It is a combination of Proposition 1.3 from \citet{LMPU}, Proposition 4 in \citet{CHR17}, Proposition 3.2 in \citet{LongRange} and Lemma 6 in \citet{JanFourier}.

\begin{proposition}\label{prop:dislaw}
Let $s$ be an initial divisible sandpile configuration satisfying \eqref{ini_s}, namely
\[
s(x) = 1 + \sigma(x) - \frac{1}{n^d}\sum_{y\in \Z^d_n} \sigma(y)
\]
and the collection of random variables $(\sigma(x))_{x\in \Z^d_n}$ satisfying (A-ind) or (A-cor).  Then the law of the odometer $u^{\star}$ is equal to  

\begin{equation}\label{char-field-long-range-divisible-eq-1}
	\{u^{\star}_\infty(x)\}_{x \in \mathbb{Z}^d_n}
\sim 
	\Big\{
		\eta^{\star}(x)- \min_{z \in \mathbb{Z}^d_n}\eta^{\star}(z)
	\Big\}_{x \in \mathbb{Z}^d_n },
\end{equation}
where $\eta^{\star}(x)=\sum_{y\in \Z^d_n} g^{\star}(x,y)(s(y)-1)$ and in particular has covariances given by: 
\begin{equation}\label{cov}
\mathbb{E}(\eta^{\star}(x)\eta^{\star}(y)) = \begin{cases} 
\sum_{z\in \Z^d_n} g^{n.n.}(x,z)g^{n.n.}(z,y) & \text{ for } \star=n.n., (A-ind) \\
\sum_{z\in \Z^d_n} g^{l.r.}(x,z)g^{l.r.}(z,y) & \text{ for } \star=l.r., (A-ind) \\
\sum_{z,z'\in \Z^d_n} K_n(z-z')g^{n.n.}(x,z)g^{n.n.}(z',y) & \text{ for } \star=n.n., (A-cor) 
\end{cases}
\end{equation}
for all $x,y\in \Z^d_n$.
\end{proposition}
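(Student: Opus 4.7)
The strategy is to first establish the functional identity in \eqref{char-field-long-range-divisible-eq-1} by inverting the Laplacian against the Green's function, and then to compute the covariances in each of the three cases by using the centering of $s-1$.

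For the first part, I would start from Proposition~\ref{prop:stab}, which tells us that under constraint~(*) the odometer is the unique function $u^{\star}$ satisfying $(-\Delta)^{\star} u^{\star} = s-1$ together with $\min_{x\in\Z^d_n} u^{\star}(x)=0$. Next I would verify that $\eta^{\star}(x) := \sum_{y\in\Z^d_n} g^{\star}(x,y)(s(y)-1)$ also solves $(-\Delta)^{\star}\eta^{\star} = s-1$. This uses the torus identity $(-\Delta)^{\star}g^{\star}(\cdot,y) = \delta_{y}(\cdot)-n^{-d}$, combined with $\sum_{y}(s(y)-1)=0$ (which is just (*)). Since the kernel of $(-\Delta)^{\star}$ is the constants, $u^{\star}$ and $\eta^{\star}$ differ only by a constant; imposing $\min u^{\star}=0$ forces $u^{\star}(x) = \eta^{\star}(x) - \min_{z}\eta^{\star}(z)$, proving the distributional identity pointwise (indeed almost surely, not just in law).

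For the covariances, the key preliminary observation is that the symmetric torus Green's function $g^{\star}$ satisfies $\sum_{y\in\Z^d_n} g^{\star}(x,y) = 0$ (this follows from averaging the killed Green's functions $g^{\star}_{z}(x,y)$ over $z$, together with reversibility). Consequently, writing $\bar\sigma := n^{-d}\sum_{w}\sigma(w)$ and $s(y)-1 = \sigma(y)-\bar\sigma$, the contribution of $\bar\sigma$ in the definition of $\eta^{\star}$ collapses:
\begin{equation*}
\eta^{\star}(x) = \sum_{y} g^{\star}(x,y)\sigma(y) - \bar\sigma \sum_{y} g^{\star}(x,y) = \sum_{y} g^{\star}(x,y)\sigma(y).
\end{equation*}
From this point the three cases in \eqref{cov} are routine. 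For $\star = \text{n.n.}$ or $\star = \text{l.r.}$ under (A-ind), I would insert $\mathbb{E}[\sigma(z)\sigma(z')] = \delta_{z,z'}$ into the double sum, collapse one index, and use the symmetry $g^{\star}(z,y)=g^{\star}(y,z)$. For $\star = \text{n.n.}$ under (A-cor) I would simply substitute $\mathbb{E}[\sigma(z)\sigma(z')] = K_n(z-z')$ and keep the double sum.

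The only genuinely delicate step is the verification of the Green's function properties on the torus, namely the identity $(-\Delta)^{\star}g^{\star}(\cdot,y) = \delta_y - n^{-d}$ and the vanishing row-sum $\sum_{y}g^{\star}(x,y)=0$; these are known for the simple random walk case but need a careful check for the long-range walk defined by $p^{(\alpha)}_n$ in \eqref{def-random-walk-in-zdn}, where reversibility and transience/recurrence on the finite torus work differently. Once these properties are in hand, the rest of the argument is a direct reduction via the least-action representation and a bilinear expansion of the covariance.
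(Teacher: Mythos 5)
Your overall route is the same as the paper's: the paper disposes of this proposition in one line (``a direct consequence of Proposition \ref{prop:stab}''), and your write-up is the natural expansion of that line --- invert $(-\Delta)^{\star}$ via the torus Green's function, use uniqueness modulo constants, pin down the constant with $\min u^{\star}=0$, then expand the covariance bilinearly. The verification you flag at the end (that $(-\Delta)^{\star}g^{\star}(\cdot,y)=\delta_y-n^{-d}$ also holds for the long-range kernel $p^{(\alpha)}_n$) is indeed the only input needing a check beyond the nearest-neighbour case, and it goes through because $p^{(\alpha)}_n$ is an irreducible, symmetric transition kernel on the finite torus with uniform stationary measure.

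There is, however, one incorrect intermediate claim: $\sum_{y\in\Z^d_n}g^{\star}(x,y)=0$ is false for the Green's function as defined in the paper. Since $g^{\star}_z(x,y)\ge 0$, one has $\sum_{y}g^{\star}(x,y)=n^{-d}\sum_{z}\mathbb{E}_x[\tau_z]>0$. What is true --- and what your argument actually needs --- is that this row sum is a \emph{constant} $C_n$ independent of $x$; this is the random target lemma for an irreducible chain with uniform stationary distribution. (The zero-row-sum identity would hold for the spectral Green's function $n^{-d}\sum_{w\neq 0}\psi_w(x)\overline{\psi_w(y)}/(-\lambda_w)$, which differs from the probabilistic one by exactly this additive constant.) The consequence is that $\eta^{\star}(x)=\sum_{y}g^{\star}(x,y)\sigma(y)-C_n\bar\sigma$: the extra term is a random constant, independent of $x$, so it cancels in $\eta^{\star}(x)-\min_{z}\eta^{\star}(z)$ and the identity \eqref{char-field-long-range-divisible-eq-1} is untouched. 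For the covariance it does not cancel: under (A-ind) one gets $\mathbb{E}[\eta^{\star}(x)\eta^{\star}(y)]=\sum_{z}g^{\star}(x,z)g^{\star}(z,y)-n^{-d}C_n^2$, i.e.\ \eqref{cov} holds only up to an additive constant independent of $x,y$ --- which is in fact how the paper itself uses it later (``$\mathbb{E}(\eta^{\star}_{\infty}(x)\eta^{\star}_{\infty}(y))=const+\dots$''), and which is harmless downstream because all test functions have mean zero. So: correct the row-sum claim to ``constant in $x$,'' and either carry that constant through \eqref{cov} or switch to the spectral normalization of $g^{\star}$; the rest of your argument is sound.
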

The proof is a direct consequence of Proposition \ref{prop:stab}. Note that the covariance function $f(x,y):=\mathbb{E}(\eta^{\star}(x)\eta^{\star}(y))$ solves the following discrete PDE (up to constants)
\[
(-\Delta^{\star}_n)^2 f(x,y) = \delta_x(y) -\frac{1}{n^d}.
\]
Of particular interest are initial $\sigma$'s which are Gaussian. Then the distribution of $u^{\star}$ is completely determined by the covariance $(f(x,y))_{x,y\in \Z^d_n}$. In fact, assume that $\star=n.n$ and $\sigma$'s are standard normals, then we retrieve the discrete bi-Laplacian field
\[
(\eta(x))_{x\in \Z^d_n} \sim N(0, (f(x,y))_{x,y})
\]
where $f(x,y)$ solves the discrete bi-Laplacian equation $(-\Delta_g)^2 f(x,y) = \delta_x(y) -\frac{1}{n^d}$. Let us look at some examples for different laws of the odometer $u^{\star}$ depending on the diverse  assumptions.\\

\noindent
\textbf{Examples:}

\textbf{Case 1: nearest neighbour distribution and $\sigma$'s are i.i.d. standard normals}\\
For the nearest neighbour case and under assumption (A-ind) when the $\sigma$'s are independent standard normals, the odometer interface is depicted in Figure \ref{uncor}.
\begin{figure}[htb]
\includegraphics[scale=0.3]{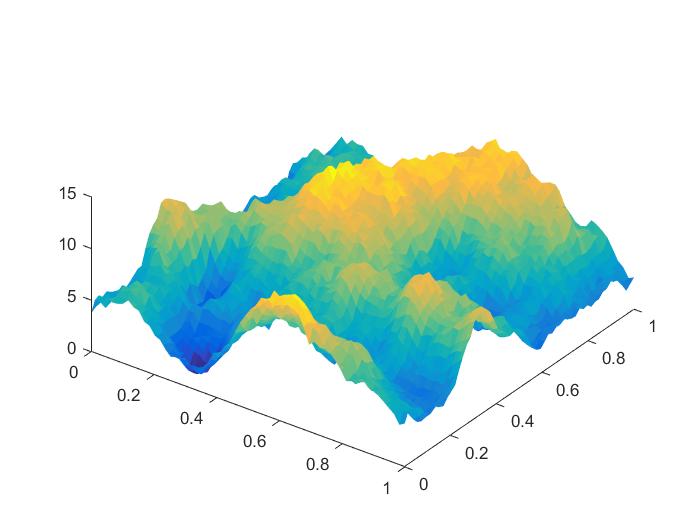}
\caption{Realization of the odometer for $u^{n.n.}$ and $\sigma$'s are i.i.d. $\sigma \sim N(0,1)$}
\label{uncor}
\end{figure}

\textbf{Case 2: nearest neighbour distribution and $\sigma$'s are correlated}\\
Let $\sigma$'s is $(\sigma(x))_{x\in \mathbb{Z}^d_n} \sim \mathcal N(0,\, K_n)$ when
the covariance matrix $K_n$ is polynomially decaying. Consider for example the summable kernel ($K\in \ell^1(\Z^d)$):
\[
K_n^{\pm}(x,y) = \begin{cases} 7 & \text{ if } x=y \\  \pm \| x-y\|^{-3} & \text{ otherwise }.\end{cases}
\]
The corresponding realizations of the odometer function are indicated in Figures~\ref{Fig:PosK}-\ref{Fig:NegK}. 
\begin{figure}[h]
\centering
 \centering
    \begin{minipage}{.5\textwidth}
\includegraphics[scale=1]{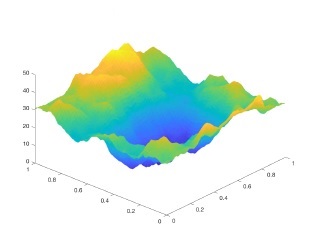}
\caption{Realization of the odometer associated to $K_n^+$.}\label{Fig:PosK}
\end{minipage}%
    \begin{minipage}{.5\textwidth}
\centering
\includegraphics[scale=1]{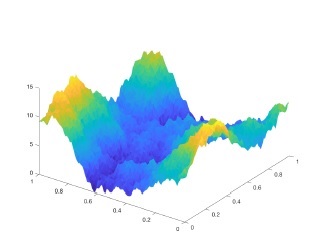}
\caption{Realization of the odometer associated to $K^-_n$.}\label{Fig:NegK}
\end{minipage}
\end{figure}

A combination of the two correlated interface is presented in Figure \ref{fig_comp}.
\begin{figure}[htb]
\includegraphics[scale=0.5]{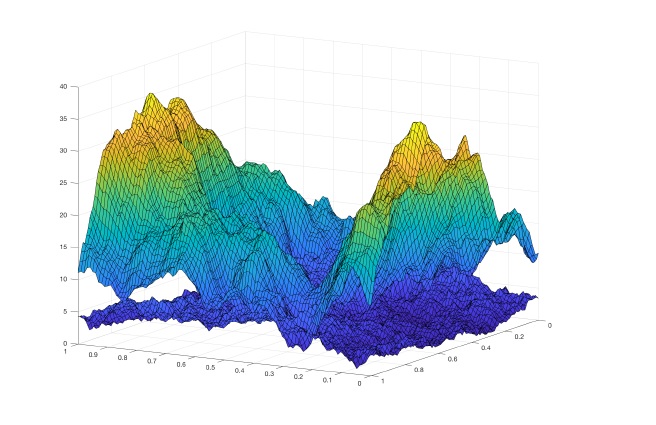}
\caption{Combination of the odometer interfaces associated to $K^+_n$ and $K^-_n$.}
\label{fig_comp}
\end{figure}
Observe that the realizations of the odometer function w.r.t. correlated initial $\sigma's$ look similar to the uncorrelated case. The positive or negative correlations play a role for maxima of the interface and how much it fluctuates. Positive correlations increase the position of the maxima and negative decrease them. The last Figure \ref{smoth} corresponds to choose $K_n(x,y)=C\|x- y\|^{-1}$, so $K_n\notin \ell^1(\Z^d)$. We get a smooth interface corresponding to the field $\Delta^{-(1+1/4)}W$ in the continuum.

\begin{figure}[htb]
\includegraphics[scale=0.3]{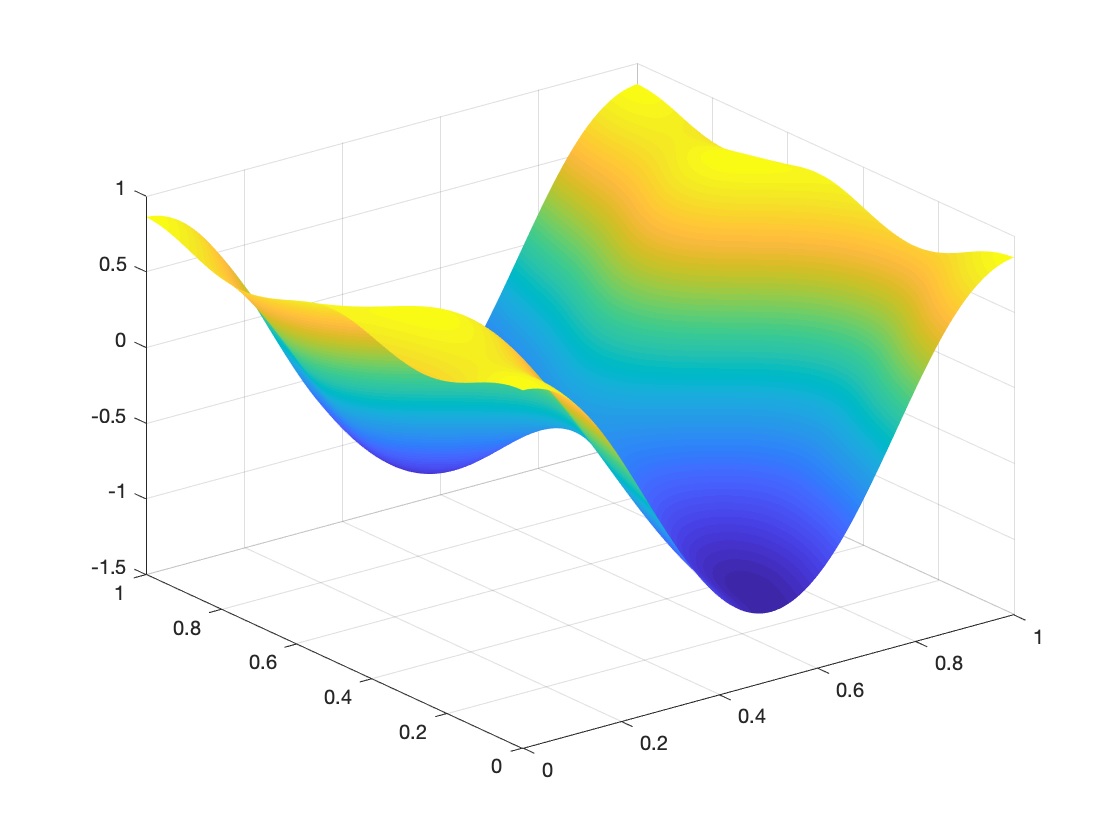}
\caption{Odometer for non-summable correlations}
\label{smoth}
\end{figure}


\textbf{Case 3: long-range distribution and $\sigma$'s are i.i.d. standard normals}\\
The third example concerns initial $\sigma$'s which are i.i.d. standard Gaussians and a long-range type redistribution w.r.t. a parameter $\alpha$.

\begin{figure}[h]
\centering
 \centering
    \begin{minipage}{.5\textwidth}
    \includegraphics[width=0.8\linewidth]{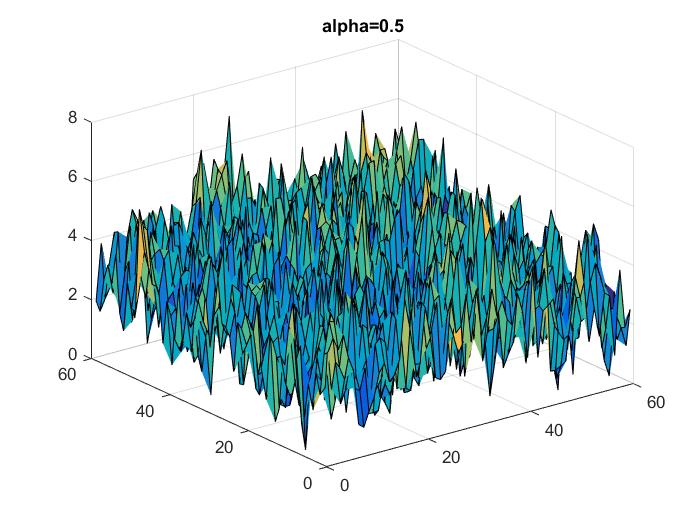} 
\caption{Realization of the odometer $u^{l.r.}$ associated to $\alpha=0.5$.}
\end{minipage}%
    \begin{minipage}{.5\textwidth}
\centering
    \includegraphics[width=0.8\linewidth]{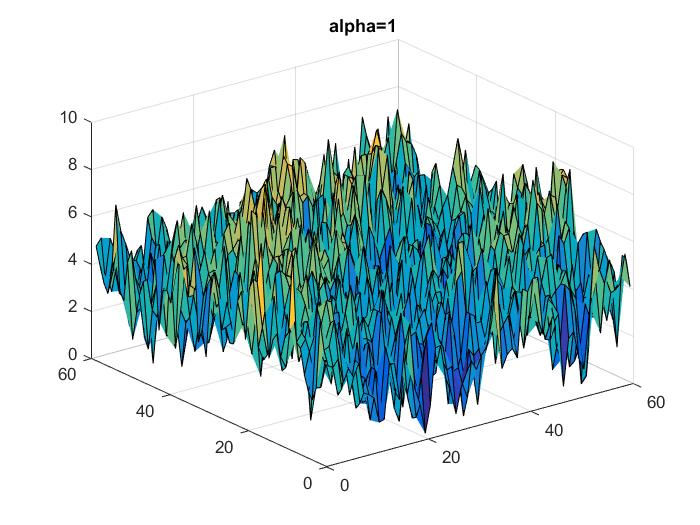} 
\caption{Realization of the odometer $u^{l.r.}$ associated to $\alpha=1$.}
\end{minipage}
\end{figure}

\begin{figure}[h]
\centering
 \centering
    \begin{minipage}{.5\textwidth}
    \includegraphics[width=0.8\linewidth]{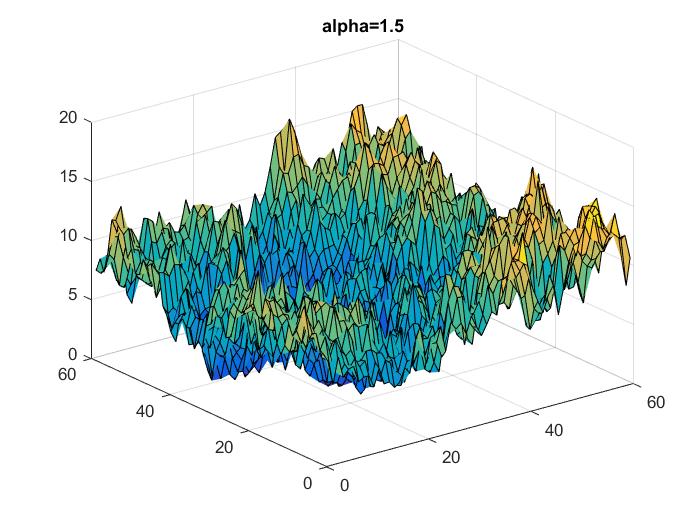} 
\caption{Realization of the odometer $u^{l.r.}$ associated to $\alpha=1.5$.}
\end{minipage}%
    \begin{minipage}{.5\textwidth}
\centering
    \includegraphics[width=0.8\linewidth]{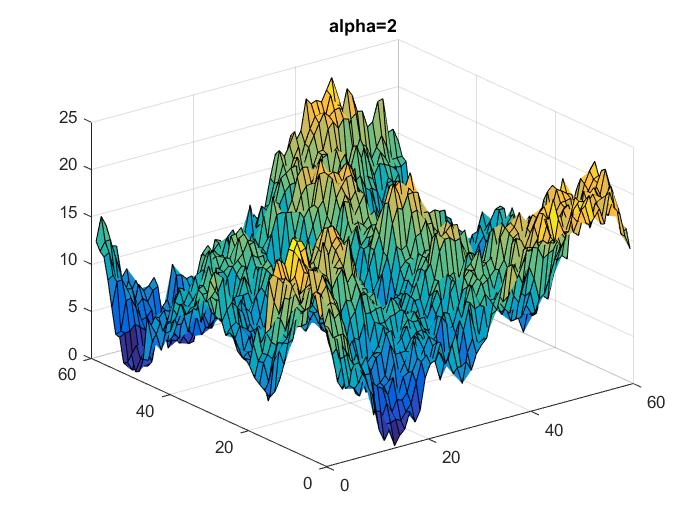} 
\caption{Realization of the odometer $u^{l.r.}$ associated to $\alpha=2$.}
\end{minipage}
\end{figure}

Observe that the lower $\alpha$ the ``rougher" the interface looks like.
\subsection{Mean behaviour of the odometer, extrema and fluctuations}

As described in Section 1.3 of \citet{LMPU}, the expected odometer can be seen as a \textit{measure of how difficult it is to stabilize}. In other words, it gives the information on how much mass is emitted in average from each site depending on dimension.  
Let $\sigma(x) \sim N(0,1)$ being i.i.d. and satisfying $\sum_{x\in \Z^d_n}s(x)=n^d$ and we will now consider general $\Delta^{\star}$. Note that from Proposition \ref{prop:dislaw}
we obtain by invariance:
\[
\mathbb{E}\left(u^{\star}_{\infty}(o) \right) = \mathbb{E}\left(\max_{x\in \Z^d_n} \eta^{\star}(x) \right).
\]
The order of the mean odometer will give the order of the maxima of the Gaussian field.

We combine Theorem 1.2 and Proposition 8.3, 8.8 from \citet{LMPU} in the following Theorem \ref{max_odo_nn}.  The proof replies on the fact that in law the odometer is equal to a discrete bi-Laplacian field shifted to have minimum value 0.

\begin{theorem}\label{max_odo_nn}
There exists a constant $C_d>0$ such that 
\[
C_d^{-1} \phi_d(n) \leq \mathbb{E} (u_{\infty}^{n.n.}(x)) \leq C_d \phi_d(n)
\]
where for $n\geq 2$, the function $\phi_d$ is defined as
\[
\phi_d(n) =\begin{cases}
n^{2- d/2}, & d<4 \\
\log(n), & d=4 \\
(\log(n))^{1/2}, & d\geq 5.   
\end{cases}
\]
Moreover $\mathbb{E}\left(\max_{x\in \Z^d_n} \eta^{n.n.}(x) \right) \asymp \phi_d(n)$.
\end{theorem}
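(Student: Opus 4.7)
The plan is to reduce Theorem \ref{max_odo_nn} to an estimate on the expected maximum of the centered Gaussian field $\eta^{\text{n.n.}}$ from Proposition \ref{prop:dislaw}, and then analyse that maximum dimension by dimension. By Proposition \ref{prop:dislaw} under (A-ind) with standard normals, one has the distributional identity $u_\infty^{\text{n.n.}}(x) \stackrel{d}{=} \eta^{\text{n.n.}}(x) - \min_{z} \eta^{\text{n.n.}}(z)$ for each $x$. Taking expectations and using that $\eta^{\text{n.n.}}$ is centered and sign-symmetric (so $-\min \eta \stackrel{d}{=} \max \eta$) yields
\[
\mathbb{E}\bigl(u_\infty^{\text{n.n.}}(x)\bigr) = \mathbb{E}\bigl(\max\nolimits_{z} \eta^{\text{n.n.}}(z)\bigr),
\]
so the two assertions of the theorem collapse to a single estimate. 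By \eqref{cov}, $\eta^{\text{n.n.}}$ has the law of the discrete bi-Laplacian (membrane) Gaussian field on $\mathbb{Z}^d_n$ pinned to have vanishing spatial mean.

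The next step is to compute $\sigma_n^2 := \mathrm{Var}(\eta^{\text{n.n.}}(0))$ by diagonalising $\Delta_g$ in the Fourier basis $\{\psi_w\}$ from \eqref{def-fourier-basis-discrete}. Writing $g^{\text{n.n.}}$ in Fourier and applying Parseval gives $\sigma_n^2 = n^{-d}\sum_{w \in \mathbb{Z}^d_n \setminus \{0\}} \lambda_w^{-2}$, and then the uniform estimate $|\lambda_w|\asymp \|w\|^2/n^2$ on $\mathbb{Z}^d_n\setminus\{0\}$ (with the torus norm) together with a Riemann sum comparison yields
\[
\sigma_n^2 \asymp \begin{cases} n^{4-d}, & d\in\{1,2,3\}, \\ \log n, & d=4, \\ 1, & d\geq 5. \end{cases}
\]
In particular $\sigma_n \asymp \phi_d(n)$ for $d<4$ while $\sigma_n \asymp \sqrt{\log n}$ for $d\geq 4$.

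For $d\in\{1,2,3\}$ the field is smooth at large scales: the variogram $\mathbb{E}(\eta(x)-\eta(y))^2$ inherited from Step 2 decays like $n^{4-d}(\|x-y\|/n)^{\beta}$ for some $\beta > 0$. A Dudley/Kolmogorov-Chentsov chaining argument then gives the upper bound $\mathbb{E}(\max\eta^{\text{n.n.}}) \leq C\sigma_n$, and tightness of the rescaled field $n^{-(2-d/2)}\eta^{\text{n.n.}}(\lfloor n\cdot\rfloor)$ toward a non-degenerate continuous Gaussian field on $\mathbb{T}^d$ whose maximum is a.s.\ strictly positive provides the matching $\mathbb{E}(\max\eta^{\text{n.n.}}) \geq c\sigma_n$. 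For $d\geq 5$ the SRW Green's function decays like $\|x-y\|^{2-d}$, so the covariance $f(x,y)$ decorrelates on large scales and one can extract $\asymp n^d$ asymptotically independent sites from $\mathbb{Z}^d_n$; Borell-TIS then gives $\mathbb{E}(\max\eta^{\text{n.n.}}) \leq C\sqrt{\log n}$, while Slepian comparison against an i.i.d.\ Gaussian family of the same cardinality gives the matching lower bound. The main obstacle is the borderline $d=4$ regime, where $\eta^{\text{n.n.}}$ is logarithmically correlated with $\mathrm{Cov}(\eta(x),\eta(y))\asymp \log(n/\|x-y\|)$: the naive Borell-TIS bound already gives the correct upper order $\sigma_n\sqrt{d\log n}\asymp \log n$, but confirming a matching lower bound of $\log n$ (rather than merely $\sqrt{\log n}$) requires log-correlated Gaussian field techniques such as a modified second moment method or a multiscale decomposition mimicking a branching random walk, which is precisely the content of LMPU's Proposition 8.8.
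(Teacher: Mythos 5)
Your overall strategy coincides with the paper's (which in turn is a pointer to Theorem 1.2 and Propositions 8.3, 8.8 of LMPU): reduce $\mathbb{E}(u_\infty^{n.n.}(x))$ to $\mathbb{E}(\max_z \eta^{n.n.}(z))$ via the distributional identity of Proposition \ref{prop:dislaw} and the sign symmetry of the centered field, compute the variance and variogram in Fourier using $|\lambda_w|\asymp \|w/n\|^2$, and then estimate the expected maximum regime by regime. The reduction, the variance asymptotics, the $d<4$ chaining upper bound, and the identification of $d=4$ as a log-correlated case with the lower bound deferred to LMPU's Proposition 8.8 are all consistent with the source (the survey itself only cites Dudley's bound and Talagrand's majorizing measure theorem, so deferring the $d=4$ lower bound is on par with the paper's level of detail).

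There is, however, one step that fails as written: the lower bound for $d\geq 5$ via "Slepian comparison against an i.i.d.\ Gaussian family of the same cardinality." Slepian's inequality lower-bounds $\mathbb{E}\max X$ by $\mathbb{E}\max Y$ only when $\mathbb{E}[X_iX_j]\leq \mathbb{E}[Y_iY_j]$ off the diagonal; comparing against an i.i.d.\ family therefore requires the off-diagonal covariances of $\eta^{n.n.}$ to be \emph{nonpositive}. But by \eqref{cov} one has $\mathbb{E}[\eta^{n.n.}(x)\eta^{n.n.}(y)]=\sum_z g^{n.n.}(x,z)g^{n.n.}(z,y)>0$, so the hypothesis is violated no matter how well separated the chosen sites are. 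The correct tool here is Sudakov's minoration (equivalently, the lower half of the majorizing measure theorem that the paper cites): the two-sided variogram estimate $\mathbb{E}[(\eta^{n.n.}(x)-\eta^{n.n.}(o))^2]\geq c_d>0$ for all $x\neq o$ when $d\geq 5$ shows the canonical metric has $\asymp n^d$ points that are $c$-separated, whence $\mathbb{E}(\max\eta^{n.n.})\geq c'\sqrt{\log(n^d)}\asymp (\log n)^{1/2}$. With that substitution (and, for $d<4$, replacing the somewhat circular "tightness of the rescaled field" lower bound by the elementary two-point bound $\mathbb{E}\max(\eta(x),\eta(y))=(2\pi)^{-1/2}\,d_\eta(x,y)$ evaluated at $r\asymp n$), your argument is sound and matches the paper's route.
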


A combination of Proposition 8.2 and 8.7 in \citet{LMPU} yields the following proposition for the variance of the $\eta$- configuration, (see also Proposition \ref{prop:dislaw} for the definition of $\eta$) .

\begin{proposition}
There exist $c_d,C_d>0$ such that, if $r=\|x\|_2$,
\[
c_d \psi_d(n,r) \leq \mathbb{E}((\eta^{n.n.}(x)-\eta^{n.n.}(o))^2) \leq C_d \psi_d(n,r)
\]
where
\[
\psi_d(n,r) = \begin{cases}
n r^2, & d=1\\
r^2 \log(n/r), & d=2\\
r, & d=3 \\
\log(1+r) & d=4 \\
1, & d\geq 5. 
\end{cases}
\]
\end{proposition}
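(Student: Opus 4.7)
The plan is to reduce the computation to an explicit Green's function sum for the discrete bi-Laplacian on the torus and then estimate the resulting spectral expression via the Fourier analysis of Section \ref{subsec-fourier-torus}, using a dimension-sensitive splitting of frequencies.

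First, from $\eta^{n.n.}(x)=\sum_{y}g^{n.n.}(x,y)(s(y)-1)$ with $s(y)-1=\sigma(y)-n^{-d}\sum_z\sigma(z)$, and the translation invariance on $\Z^d_n$ which makes $\sum_y g^{n.n.}(x,y)$ independent of $x$, one obtains $\eta^{n.n.}(x)-\eta^{n.n.}(o)=\sum_y(g^{n.n.}(x,y)-g^{n.n.}(o,y))\sigma(y)$. Since the $\sigma$'s are i.i.d.\ standard Gaussians, this is centered Gaussian with variance $\sum_y (g^{n.n.}(x,y)-g^{n.n.}(o,y))^2=2(G^{(2)}(o)-G^{(2)}(x))$, where $G^{(2)}(z):=\sum_yg^{n.n.}(o,y)g^{n.n.}(y,z)$ is (up to constants) the Green's function of $(-\Delta_g)^2$ on $\Z^d_n$. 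Expanding in the eigenbasis \eqref{def-fourier-basis-discrete} with eigenvalues \eqref{eq:eigenvalues} then gives
\[
G^{(2)}(o)-G^{(2)}(x)=\frac{1}{n^d}\sum_{w\in\Z^d_n\setminus\{0\}}\frac{1-\cos(2\pi w\cdot x/n)}{\lambda_w^2},
\]
and the asymptotic $|\lambda_w|\asymp\|w\|^2/n^2$ on $\|w\|_\infty\leq n/2$ reduces the task to estimating $n^{4-d}\sum_{w\neq 0}(1-\cos(2\pi w\cdot x/n))\|w\|^{-4}$.

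Second, I would split the sum at the crossover frequency $\|w\|\sim n/r$ with $r=\|x\|_2$: in the low-frequency range use $1-\cos(2\pi w\cdot x/n)\asymp \|w\|^2 r^2/n^2$, and in the high-frequency range bound $|1-\cos(\cdot)|\leq 2$. Comparing with Riemann integrals yields the upper bound
\[
C\left(r^2 n^{2-d}\sum_{0<\|w\|\lesssim n/r}\|w\|^{-2}+n^{4-d}\sum_{\|w\|\gtrsim n/r}\|w\|^{-4}\right),
\]
which case-by-case reproduces $\psi_d(n,r)$: for $d\leq 3$ the low-frequency piece dominates and gives $nr^2,\,r^2\log(n/r),\,r$ for $d=1,2,3$ respectively; at $d=4$ both pieces are logarithmic and combine to $\log(1+r)$; for $d\geq 5$ both sums are of order $1$, giving $\psi_d=1$. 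The matching lower bound is obtained by restricting the non-negative Fourier sum to a positive-density cone of modes aligned with $x/\|x\|_2$, on which $1-\cos(2\pi w\cdot x/n)\gtrsim \min(1,\|w\|^2 r^2/n^2)$, and repeating the Riemann-sum estimate.

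I expect the main obstacle to be the borderline dimensions $d=2$ and $d=4$, where the logarithmic factors $\log(n/r)$ and $\log(1+r)$ emerge from cancellation between the two frequency regimes and require careful bookkeeping of the integral-vs-sum errors near the crossover. A probabilistic alternative via the kernel identity $G^{(2)}(o,x)=\sum_{t\geq 0}(t+1)p_t(o,x)$ combined with local CLT bounds for the simple random walk would give the same estimates but at essentially equivalent technical cost.
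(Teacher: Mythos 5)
Your route is essentially the one this survey itself uses for the long-range analogue (Proposition \ref{prop-upper-bounds-gaussian-distance}): reduce to the spectral sum $\frac{1}{n^d}\sum_{w\neq 0}\frac{\sin^2(\pi w\cdot x/n)}{\lambda_w^2}$ via translation invariance of $g^{n.n.}$, replace $\lambda_w$ by $\|w/n\|^2$ using \eqref{eq:eigenvalues}, and compare with Riemann integrals after splitting at $\|w\|\sim n/r$. For the nearest-neighbour statement the survey only cites Propositions 8.2 and 8.7 of \citet{LMPU}, so there is no in-paper proof to diverge from; your argument is a legitimate self-contained substitute, and the upper bound, checked case by case, does reproduce $\psi_d(n,r)$ (with the harmless observation that $\log(n/r)$ is bounded below by a positive constant since $r\lesssim n$).

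One point needs repair before the lower bound is complete. For $d\le 3$ the low-frequency regime alone yields $\psi_d(n,r)$, and there your cone argument is sound: for $\|w\|\lesssim n/r$ and $w$ in a cone around $x/r$ one has $|2\pi w\cdot x/n|\lesssim 1$, where $1-\cos\theta\asymp\theta^2$. But for $d=4$ the low-frequency piece is only $O(1)$ (not logarithmic, contrary to your summary), and for $d\ge 5$ it is $O(r^{4-d})$; in both cases the claimed lower bound $\log(1+r)$ resp. $1$ must come entirely from $\|w\|\gtrsim n/r$. In that regime the pointwise inequality $1-\cos(2\pi w\cdot x/n)\gtrsim 1$ fails on resonant modes with $w\cdot x\equiv 0 \pmod n$, no matter which cone you restrict to, so the bound has to be stated in averaged form: over any box $Q$ of side $L\gtrsim n/\|x\|_\infty$ the sum $\sum_{w\in Q}\cos(2\pi w\cdot x/n)$ factors into Dirichlet kernels of total size $o(|Q|)$, whence $\sum_{w\in Q}\bigl(1-\cos(2\pi w\cdot x/n)\bigr)\ge c|Q|$; summing this over dyadic annuli $n/r\lesssim\|w\|\lesssim n$ recovers $\log(1+r)$ for $d=4$ and $1$ for $d\ge 5$. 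With that substitution the proof is correct.
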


If we now allow to spread mass relative to the distance of the points, hence according to a long-range random walk, how fast does the sandpile stabilize?
It is intuitively clear that, if mass will spread to far at each toppling, then the stabilization should be faster than in the previous case. Indeed it is the case, namely let us state Theorem 3.4. from \citet{LongRange}

\begin{theorem}\label{max_odo}
Let $\alpha \in (0,\infty)\setminus  \{2\}$, $\sigma(x) \sim N(0,1)$ i.i.d., $\star=l.r.$ and $\gamma=\min\{2,\alpha \}$. Then there exist $C_d$ such that
\[
C^{-1}_d \Phi_d(n) \leq \mathbb{E}(u_{\infty}^{l.r.}(x)) \leq C_d \Phi_d(n)
\]
where
\[
\Phi_d(n) = \begin{cases}
n^{\gamma - d/2}, & \gamma > d/2\\
\log(n), & \gamma=d/2\\
(\log(n))^{1/2}, & \gamma>d/2. \\
\end{cases}
\]
\end{theorem}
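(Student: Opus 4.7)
The strategy mirrors the approach of Theorem \ref{max_odo_nn}, replacing the graph Laplacian $\Delta_g$ by the fractional Laplacian $-(-\Delta_n)^{\alpha/2}$ throughout. First, Proposition \ref{prop:dislaw} combined with stationarity of the centered Gaussian field $\eta^{l.r.}$ on the torus yields
\[
\mathbb{E}\bigl(u_{\infty}^{l.r.}(x)\bigr) \;=\; \mathbb{E}\Bigl(\max_{z\in\Z^d_n}\eta^{l.r.}(z)\Bigr),
\]
so the problem reduces to estimating the expected maximum of a stationary Gaussian field whose covariance is diagonal in the Fourier basis \eqref{def-fourier-basis-discrete}. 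A standard two-regime estimate
\[
\bigl|\lambda_w^{(\alpha,n)}\bigr| \;\asymp\; \bigl(\|w\|/n\bigr)^{\gamma},\qquad w\in\Z^d_n\setminus\{0\},
\]
(obtained by splitting the sum defining $\lambda_w^{(\alpha,n)}$ according to whether the associated random walk has finite or infinite step-variance; this dichotomy is precisely what fails at $\alpha=2$) then gives
\[
\sigma_n^2 \;:=\; \var\bigl(\eta^{l.r.}(0)\bigr) \;=\; \frac{1}{n^d}\sum_{w\neq 0}\frac{1}{(\lambda_w^{(\alpha,n)})^2}\;\asymp\; n^{2\gamma-d}\sum_{w\in\Z^d_n\setminus\{0\}}\|w\|^{-2\gamma}.
\]
Comparing the latter sum with $\int_1^n r^{d-1-2\gamma}\,dr$ yields $\sigma_n^2\asymp n^{2\gamma-d}$ when $\gamma>d/2$, $\sigma_n^2\asymp\log n$ when $\gamma=d/2$, and $\sigma_n^2\asymp 1$ when $\gamma<d/2$.

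For the upper bound I would first apply the standard Gaussian concentration inequality $\mathbb{E}(\max_{z}\eta^{l.r.}(z))\le C\sigma_n\sqrt{d\log n}$. In both regimes $\gamma\le d/2$ this already produces the announced $\Phi_d(n)$; in the smooth regime $\gamma>d/2$ it is off by a spurious $\sqrt{\log n}$ factor, and one must exploit the regularity of the field instead. A second Fourier computation, splitting the sum over modes at $\|w\|\asymp n/\|x-y\|$, gives
\[
\mathbb{E}\bigl[(\eta^{l.r.}(x)-\eta^{l.r.}(y))^2\bigr]\;\lesssim\; n^{2\gamma-d}\bigl(\|x-y\|/n\bigr)^{\min\{2\gamma-d,\,2\}},
\]
so that the rescaled field $n^{-(\gamma-d/2)}\eta^{l.r.}$ is uniformly H\"older continuous (in the intrinsic Gaussian metric) on the rescaled torus. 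A Dudley/Fernique chaining argument applied to this rescaled object then yields $\mathbb{E}(\max_{z}\eta^{l.r.}(z))\lesssim n^{\gamma-d/2}$.

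For the matching lower bound the smooth regime is immediate: Jensen's inequality combined with stationarity gives $\mathbb{E}(\max_{z}\eta^{l.r.}(z))\ge\mathbb{E}(\eta^{l.r.}(0)_+)=\sigma_n/\sqrt{2\pi}\asymp n^{\gamma-d/2}$. When $\gamma\le d/2$ I would instead invoke a Sudakov--Fernique comparison: extract a sub-lattice of $m\asymp n^d$ points along which the incremental variance $\mathbb{E}[(\eta^{l.r.}(x_i)-\eta^{l.r.}(x_j))^2]$ is bounded below by $c\sigma_n^2$ uniformly for $i\ne j$ (an estimate which follows from the increment computation above, since in this regime the covariance decays faster than the variance grows), and compare with an i.i.d.\ $\mathcal N(0,c\sigma_n^2)$ family of size $m$ to conclude $\mathbb{E}(\max_{z}\eta^{l.r.}(z))\gtrsim\sigma_n\sqrt{\log m}\asymp\sigma_n\sqrt{\log n}$.

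The principal technical obstacle is the chaining step in the smooth regime: one must verify that the H\"older increment estimate holds uniformly in $n$ and persists down to the lattice scale, so that the Dudley entropy integral converges and delivers the clean $n^{\gamma-d/2}$ bound without extra logarithmic factors. A more delicate secondary issue is the critical case $\gamma=d/2$, where $\sigma_n^2\asymp\log n$ and the field is only borderline de-correlated: producing enough well-separated points for a Sudakov--Fernique lower bound of the correct order requires a careful extraction from the increment estimate above.
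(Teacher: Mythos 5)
Your proposal is correct and follows essentially the same route as the paper: reduce via Proposition \ref{prop:dislaw} to the expected maximum of the stationary Gaussian field $\eta^{l.r.}$, extract the variance and increment asymptotics from the eigenvalue estimate $|\lambda_w^{(\alpha,n)}|\asymp(\|w\|/n)^{\gamma}$ (your increment bound is the content of Proposition \ref{prop-upper-bounds-gaussian-distance}, up to the logarithmic correction $\log r$ rather than $1$ at the critical case $\gamma=d/2$), and then run chaining for the upper bound and a Gaussian comparison for the lower bound. The paper cites Dudley's bound together with Talagrand's Majorizing Measure Theorem where you invoke the union bound, Dudley, and Sudakov--Fernique, but this is the same toolbox applied to the same canonical metric $d_\eta$.
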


Note the interesting point that $\gamma\leq 2$ so we will never have the correspondence to models with critical dimension higher than 4.
The next Proposition 4.6 from \citet{LongRange} will give us the order of the variance of the $\eta$'s. It is the crucial computation to get Theorem \ref{max_odo}.

\begin{proposition}\label{prop-upper-bounds-gaussian-distance}
For $\alpha \in (0,2)$ there is a constant $C = C_{d,\alpha}>0$ such that for all $n \geq 1$ and all $x \in \mathbb{Z}^d_n$,
\[
    \mathbb{E}[(\eta^{l.r.}(0) - \eta^{l.r.}(x))^2] 
\le
    C \Psi_{d,\alpha(n,\|x\|)},
\]
where 
\begin{equation}\label{def-asymp-mean-u}
    \Psi_{d,\alpha}(n,r):=
    \begin{cases}
        n^{2\alpha - d-2} r^2        ,& \text{ if } \alpha > \frac{d}{2}+1\\ 
        \log(\frac{n}{r}) r^2    ,& \text{ if } \alpha = \frac{d}{2}+1\\
        r^{2\alpha -d}            ,& \text{ if } \alpha \in  (\frac{d}{2},\frac{d}{2}+1)\\
        \log(r)                    ,& \text{ if } \alpha = \frac{d}{2}\\
        1                        ,& \text{ if } \alpha > \frac{d}{2}\\
    \end{cases}.
\end{equation}
\end{proposition}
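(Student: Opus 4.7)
The plan is to recast the second moment of the increment as a Fourier sum on $\Z^d_n$ and then estimate it from above by a low/high-frequency splitting, depending on the magnitude of $r=\|x\|$ relative to the torus side $n$.

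First I would diagonalise. By Proposition~\ref{prop:dislaw}, $\eta^{l.r.}$ coincides up to an additive constant with the odometer $u^{l.r.}$, which solves $(-\Delta_n)^{\alpha/2} u^{l.r.} = s-1$; since $(-\Delta_n)^{\alpha/2}\psi_w = (-\lambda_w^{(\alpha,n)})\psi_w$, its Fourier coefficients satisfy $\widehat{\eta^{l.r.}}(w) = \widehat{\sigma}(w)/(-\lambda_w^{(\alpha,n)})$ for $w\neq 0$. Using the orthogonality of the complex Gaussian Fourier coefficients $\mathbb{E}[\widehat{\sigma}(w)\overline{\widehat{\sigma}(w')}] = n^{-d}\delta_{w,w'}$, a direct expansion of $\eta^{l.r.}(0)-\eta^{l.r.}(x)$ gives
\[
\mathbb{E}\bigl[(\eta^{l.r.}(0)-\eta^{l.r.}(x))^2\bigr] = \frac{4}{n^d}\sum_{w\in\Z^d_n\setminus\{0\}} \frac{\sin^2(\pi x\cdot w/n)}{(\lambda_w^{(\alpha,n)})^2}.
\]

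Next I would establish the uniform lower bound $|\lambda_w^{(\alpha,n)}| \gtrsim \|w\|^\alpha/n^\alpha$ on $\Z^d_n\setminus\{0\}$ by recognising the defining expression as a Riemann sum of $\sin^2(\pi y\cdot w)/\|y\|^{d+\alpha}$ over $n^{-1}\Z^d\setminus\{0\}$: its continuum analogue equals a positive constant times $\|w\|^\alpha$ by the rescaling $y\mapsto y/\|w\|$ and rotational invariance. At the edge of the Brillouin zone $\|w\|\asymp n$ the bound becomes $|\lambda_w^{(\alpha,n)}|\asymp 1$, which is consistent with $\|w\|^\alpha/n^\alpha\asymp 1$ there. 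With this in hand, it suffices to bound from above $T(n,r) := \sum_{w\neq 0}\sin^2(\pi x\cdot w/n)/\|w\|^{2\alpha}$ and then multiply by a factor of order $n^{2\alpha-d}$. I would split the sum at $\|w\|\sim n/r$, using $\sin^2(\pi x\cdot w/n)\leq \pi^2 r^2\|w\|^2/n^2$ on the low frequencies and $\sin^2\leq 1$ on the high frequencies. Comparing each piece with the polar integrals $\int_0^{n/r} t^{d+1-2\alpha}\,dt$ and $\int_{n/r}^n t^{d-1-2\alpha}\,dt$ yields values whose regime depends on the signs of $d+2-2\alpha$ and $d-2\alpha$; the five cases that arise reproduce the five branches of $\Psi_{d,\alpha}(n,r)$ after multiplying by $n^{2\alpha-d}$.

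The main obstacle is the uniform eigenvalue lower bound. The integrand $\|y\|^{-d-\alpha}$ is non-integrable at the origin, and one must verify that excluding $y=0$ from the Riemann sum does not destroy the $\|w\|^\alpha$ scaling. The cleanest route I see is to separate the contribution of lattice points with $\|y\|\leq 1/\|w\|$ and, via the rescaling $y\mapsto y/\|w\|$, match it against the corresponding continuum integral on the unit ball, while the tail $\|y\|>1/\|w\|$ is handled by a standard Riemann-sum comparison. Once this eigenvalue bound is in place, the remaining work is elementary bookkeeping of sums against polar integrals.
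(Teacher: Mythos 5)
Your proposal is correct and follows essentially the same route as the paper: the Fourier representation $\frac{c}{n^d}\sum_{w\neq 0}\sin^2(\pi x\cdot w/n)/(\lambda_w^{(\alpha,n)})^2$, the eigenvalue comparison $|\lambda_w^{(\alpha,n)}|\asymp \|w/n\|^{\alpha}$ for $\alpha\in(0,2)$ (the paper's Lemma 4.7 of the cited work), and a Riemann-sum/integral comparison that produces the five regimes (the paper's Lemmas 4.8--4.15). Your explicit low/high-frequency split at $\|w\|\sim n/r$ with $\sin^2\le\min\{1,\pi^2 r^2\|w\|^2/n^2\}$ is exactly the bookkeeping those lemmas carry out, and your case check reproduces all five branches of $\Psi_{d,\alpha}$.
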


\begin{proof}
The proof goes via noticing that 
\begin{align*}
    \mathbb{E}[(\eta^{l.r.}(0) - \eta^{l.r.}(x))^2]
&= 
    \sum_{ w \in \mathbb{Z}^d_n} (g^\alpha(w,0)-g^\alpha(w,x))^2\\
& = 
    \frac{1}{n^d}\sum_{ w \in \mathbb{Z}^d_n\backslash \{0\}} \frac{\sin^2(\frac{\pi x\cdot w}{n})}{(\lambda^{(\alpha,n)}_w)^2} \\
& = \int_{\R^d} G_{n,d,\alpha,x}(y) dy \\
& = \int_{\R^d}  \sum_{ w \in \mathbb{Z}^d_n\backslash \{0\}} \frac{\sin^2(\frac{\pi x\cdot w}{n})}{(\lambda^{(\alpha,n)}_w)^2} 1_{B(\frac{w}{n},\frac{1}{2n})}(y) dy.
\end{align*}
 Lemma 4.7 from \citet{LongRange} roughly states that $\lambda_{w}^{(\alpha, n)} \asymp \| w/n \|^{\gamma}$ where recall that $\gamma=\min \{2,\alpha \}$ and extra logarithmic corrections for $\gamma=2$. The proof goes via  showing that for some $C>0$
\[
C^{-1} \sum_{ w \in \mathbb{Z}^d_n\backslash \{0\}} \frac{\sin^2(\frac{\pi x\cdot w}{n})}{ \|w\|^{2\gamma}} \1_{B(\frac{w}{n},\frac{1}{2n})}(y) \leq G_{n,d,\alpha,x}(y) \leq C \sum_{ w \in \mathbb{Z}^d_n\backslash \{0\}} \frac{\sin^2(\frac{\pi x\cdot w}{n})}{ \|w\|^{2\gamma}} \1_{B(\frac{w}{n},\frac{1}{2n})}(y).
\] 
The most involved technical part is to get the proper upper and lower bounds for the terms 
\[
H_{n,d,\alpha,x}(y)=\sum_{ w \in \mathbb{Z}^d_n\backslash \{0\}} \frac{\sin^2(\frac{\pi x\cdot w}{n})}{ \|w\|^{2\gamma}} \1_{B(\frac{w}{n},\frac{1}{2n})}(y) 
\]
depending on the dimension and parameter $\alpha$ Lemmas 4.8-4.15 in \citet{LongRange}. Comparisons with proper Riemann intergrals are used for that.
\end{proof}

\begin{proof}[Proof of Theorem \ref{max_odo}]
The main ingredients are (similarly to the proof of Theorem \ref{max_odo_nn} in \citet{LMPU}) 
 Dudley's bound Proposition 1.2.1 in \citet{Tal05} and the Majorizing  Measure  Theorem 2.1.1. in \citet{Tal05}. 
The idea is to study the mean of the extremes of a centered Gaussian field $\{\eta(x)\}_{x \in T}$ for some set of indexes through the metric in $T$ induced by 
\begin{align}\label{def-gaussian-metric}
\nonumber
        d_\eta : T \times T &\longrightarrow \mathbb{R} 
\\ 
        (x,y)& \longmapsto \mathbb{E}[(\eta(x) -\eta(y))^2]^{\frac{1}{2} }.
\end{align}
``Good bounds" for $d_{\eta}(x,y)$ (which will come from Propostition \ref{prop-upper-bounds-gaussian-distance}) will imply ``good bounds" for $\mathbb{E}\left (\max_{x \in T} (\eta^{l.r.}(x) )\right )$ using similar computations as in \citet{LMPU}.
\end{proof}

\subsection{Discrete fractional Gaussian fields}

Discrete Gaussian Free Fields (DGFF) on the torus  were studied for example in \citet{ABDGFF} and  \citet{AleMani}. 
In \citet{ABDGFF} the author studied level-sets percolation of the  Gaussian Free Field (GFF) on the discretized $d$-dimensional torus $\Z^d_n$ for $d\geq 3$. He  approximates the GFF on the torus by a GFF on the lattice $\Z^d$. The authors in \citet{AleMani} construct a DGFF on a compact manifold. To the author's knowledge the first time the discrete fractional Gaussian field via long-range random walks on the torus was defined, is in \citet{LongRange}. It is consistent with the definition on regular bounded domains $D\subset \R^d$ which can be found in \citet{LSSW}. Note that the authors in \citet{LSSW} (see section 12.2) define it for $\alpha \in (0,2)$ whereas in \citet{LongRange} it is defined for all $\alpha >0$ in equation 2.2.

\begin{definition}
For $\delta >0$ define $V^{\delta}:=\delta \Z^d \cap D$. Then the zero-boundary discrete fractional Gaussian field (DFGF) $h^{\delta}$ is a multivariate Gaussian function with density at $f\in \R^{V^{\delta}}$ proportional to
\[
\exp \left ( -\frac{1}{2}\sum_{(x,y)\in (\delta \Z)^2, x\neq y }  \frac{|f(x)-f(y)|}{|x-y|^{d+2\alpha}} \delta^d\right ).
\]
\end{definition}

Then $h^{\delta}$ can be interpreted as a linear functional on $C^{\infty}_c(D)$ by setting
\[
(h^{\delta}, f) =\sum_{x\in V^{\delta}} h^{\delta}(x) f(x) \delta^d
\]
for all test functions $f \in C^{\infty}_c(D)$.

\section{Scaling limits of the odometer and continuum fractional Gaussian fields}\label{sec:scale}

\subsection{Continuum fractional fields}

Let $\alpha \in \R$, $D\subset \R^d$ some regular domain and $W$ spatial white noise on $\R^d$. Recall that $W$ is a spatial white noise on $\R^d$ if for all Schwartz functions $f\in \mathcal{S}(\R^d)$ 
we have that $\langle W,f\rangle \sim N(0,\|f\|^2_{L^2(\R^d)})$.
We adapt the formal notation of \citet{LSSW} to describe
the family of fractional Gaussian fields in the continuum by
\begin{equation}\label{FGF}
FGF_{\alpha}(D) :=\Delta^{-\alpha/2} W.
\end{equation}
Special cases include the GFF for $\alpha=1$ or the bi-Laplacian model for $\alpha=2$.
Depending on the parameters $\alpha,\,d$ the field $\Delta^{-\alpha/2} W$ will be either a random distribution or a random continuous function. A random distribution (contrary to a function) cannot be defined pointwise but instead tested against suitable test functions.  More precisely, if the Hurst parameter $H$ of the $FGF_{\alpha}(D)$  field
\[
 H:=\alpha-\frac{d}{2}
\]
is strictly negative, then the limit is a random distribution while for $H\in (k,\,k+1)$, $k\in \NN\cup\{0\}$, the field is a $(k-1)$-differentiable function (\citet{LSSW} presented the results  for domains with zero boundary conditions).

\begin{definition}
For $d>2\alpha$ and $\alpha \in (0,2)$ we define the a fractional Gaussian field $\Xi^{\alpha}$ as the unique distribution on $(C^{\infty}_c(\R^d))^*$ such that for all test functions $f\in C^{\infty}_c(\R^d)$,  the random variable  $\langle \Xi^{\alpha}, f\rangle$ is a centered Gaussian variable with variance
 \[
\mathbb{E}(\langle \Xi^{\alpha}, f\rangle^2) = \int_{\R^d}\int_{\R^d} f(x)f(y)\|x-y\|^{2\alpha -d} dxdy.
\]
\end{definition}

\subsection{Scaling limits of the odometer}
Let us define the formal field for $x\in \T^d:$
\begin{equation}\label{xi}
\Xi^{\star}_n(x) := \sum_{z\in \T^d_n} u^{\star}_{\infty} (z\cdot n) \1_{B(z,1/2n)}(x).
\end{equation}
It means that for every point in the continuum torus $x\in \T^d$ we assign the value of the odometer on the discretized torus which is in a ball of radius 1/2n around $z$ containing $x$.  The scaling $a_n^{\star}$ of the formal field will depend on the initial configuration and redistribution rule. For pedagogical reasons we will write the results in 2 different theorems. The first Theorem \ref{theo_nn} is proven in \citet{CHR17} (see Theorem 1+2) and \citet{JanFourier} (refer to Theorem 1) and the second Theorem \ref{theo_lr} is proven in \citet{LongRange} in Theorem 3.5.

\begin{theorem}[Scaling limit for $\star=n.n$]\label{theo_nn}
Let the initial sandpile configuration $s$ be defined as in \eqref{ini_s} with $\sigma$'s satisfying (A-ind) or (A-cor). Define the scaling $a_n^{n.n.}$ as
\[
a_n^{n.n.} =\begin{cases} 4\pi^2 n^{\frac{d-4}{2}} & \text{ if }  (A-ind) \\ 
 4\pi^2 n^{-2} & \text{ if }  (A-cor) 
\end{cases}
\]
Then the formal field defined in \eqref{xi} converges in law towards
\[
a^{n.n.}_n \Xi^{n.n}_n \rightarrow \begin{cases} 
\Delta^{-1}W & \text{ if }  (A-ind)\\ 
\Delta^{- (1+\delta)}W & \text{ if }  (A-cor), \delta >0\\ 
\end{cases}
\]
on $\T^d$. The convergence holds in the Sobolev space $H^K_{-\epsilon}(\T^d)$ with topology induced by the norm $\|\cdot \|_{-\epsilon,K}$ for $\epsilon > \max\{d/4+1,d/2\}$.
\end{theorem}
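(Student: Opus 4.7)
\medskip

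By Proposition~\ref{prop:dislaw}, $u^{n.n.}_\infty \stackrel{d}{=} \eta^{n.n.} - \min_{z\in\Z^d_n}\eta^{n.n.}(z)$, and since the formal field $\Xi^u_n$ is paired in $H^K_{-\epsilon}(\T^d)$ only against zero-mean test functions $f\in\mathcal F$, the additive minimum contributes nothing. Consequently $\langle a^{n.n.}_n\Xi^u_n,f\rangle\stackrel{d}{=}\langle a^{n.n.}_n\Xi^\eta_n,f\rangle$, where $\Xi^\eta_n$ is the formal field built from the centered Gaussian $\eta^{n.n.}$. The rescaled field is then Gaussian, so within the abstract Wiener space framework of Section~\ref{subsec:AWS} the task reduces to (i) identifying the limit of $\mathrm{Var}(\langle a^{n.n.}_n\Xi^\eta_n,f\rangle)$ for every $f\in\mathcal F$, and (ii) proving tightness of the laws in $H^K_{-\epsilon}(\T^d)$.

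For (i), I would diagonalize in the eigenbasis $\{\psi_w\}$ of $\Delta_g$. The covariance formula of Proposition~\ref{prop:dislaw} combined with the spectral representation of the Green's function and the approximation $\int_{B(z,1/(2n))}f(x)\,\mathrm{d}x\approx n^{-d}f(z)$ yields, via Parseval,
\[
\mathrm{Var}\bigl(\langle\Xi^\eta_n,f\rangle\bigr) \;=\; \frac{1}{n^d}\sum_{w\in\Z^d_n\setminus\{0\}}\frac{\widehat{K_n}(w)\,|\widetilde f_n(w)|^2}{\lambda_w^2},
\]
where $\widehat{K_n}\equiv 1$ in case (A-ind) and $\widetilde f_n(w):=n^{-d}\sum_z f(z)\psi_w(zn)$ is a Riemann sum converging to $\widehat f(w)$ for each fixed $w$ by the remark before Definition~\ref{aws2}. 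The decisive estimate is the small-angle expansion $|\lambda_w|=4\sum_i\sin^2(\pi w_i/n)=(4\pi^2/n^2)\|w\|^2(1+o_n(1))$ for fixed $w$, whose square produces exactly the factor $n^4$ that the scaling $a^{n.n.}_n=4\pi^2 n^{(d-4)/2}$ is designed to cancel together with the residual $n^{-d}$. Term-by-term passage to the limit in (A-ind) gives
\[
\mathrm{Var}\bigl(\langle a^{n.n.}_n\Xi^\eta_n,f\rangle\bigr)\;\longrightarrow\;\sum_{w\neq 0}\frac{|\widehat f(w)|^2}{\|w\|^4},
\]
which is precisely the variance of $\langle\Delta^{-1}W,f\rangle$. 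In case (A-cor), the covariance inserts an additional Fourier multiplier $\widehat{K_n}(w)\to\widehat K(w)\asymp\|w\|^{-4\delta}$ whose non-summability pumps in an extra factor $n^d$, calibrating the scaling to $a^{n.n.}_n=4\pi^2 n^{-2}$ and producing the limit variance $\sum_{w\neq 0}\widehat K(w)|\widehat f(w)|^2/\|w\|^4$, matching $\Delta^{-(1+\delta)}W$.

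The principal obstacle is step (ii). I would establish tightness by the uniform second moment bound
\[
\sup_n\mathbb E\bigl[\|a^{n.n.}_n\Xi^\eta_n\|^2_{K,-\epsilon}\bigr]\;<\;\infty,
\]
which via the same variance computation reduces to summability of $\widehat K(w)\|w\|^{-4-4\epsilon}$ over $\Z^d\setminus\{0\}$, valid in the stated range $\epsilon>\max\{d/4+1,d/2\}$ by the AWS lemma quoted before Definition~\ref{aws2}. The delicate point is that the small-angle Taylor expansion of $\sin(\pi w_i/n)$ breaks down for frequencies $\|w\|\asymp n$. There one must use, instead of the pointwise asymptotic, the uniform two-sided bound $c\|w\|^2/n^2\le|\lambda_w|\le C\|w\|^2/n^2$ valid on all of $\Z^d_n\setminus\{0\}$, which follows from $|\sin t|\asymp |t|$ on $[-\pi/2,\pi/2]$ together with the constraint $|w_i|\le n/2$ built into $\Z^d_n$. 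With this uniform control of $\lambda_w^{-2}$, Prohorov's theorem together with the finite-dimensional convergence in (i) and the Gaussianity of all approximations upgrades the result to weak convergence of Gaussian measures on $H^K_{-\epsilon}(\T^d)$.
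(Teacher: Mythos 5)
There is a genuine gap in your treatment of case (A-ind). You assert that ``the rescaled field is then Gaussian,'' but under (A-ind) the $\sigma$'s are only assumed i.i.d.\ with mean zero and unit variance --- they need not be Gaussian, so $\eta^{n.n.}(x)=\sum_y g^{n.n.}(x,y)(s(y)-1)$ is a linear combination of non-Gaussian variables and $\Xi^\eta_n$ is not Gaussian at finite $n$. Identifying the limit of the variance therefore does not determine the limit law: you must additionally prove asymptotic Gaussianity, i.e.\ a CLT for the linear statistics $\langle a^{n.n.}_n\Xi^\eta_n,f\rangle$. The paper does this in two stages: first a combinatorial moment computation (Proposition \ref{prop-convergence-moments-bounded-case}), summing over partitions of $\{1,\dots,m\}$ and exploiting independence of the $\sigma$'s to show all moments converge to the Gaussian ones when $\sigma$ has all moments finite; then a truncation argument at level $R$ combined with Billingsley's Theorem~4.2 (Theorem \ref{thm:billy}) to pass from bounded $\sigma$ to general finite-variance $\sigma$. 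Your proposal contains neither ingredient, so as written it proves the theorem only when the $\sigma$'s are themselves Gaussian. (For (A-cor) the $\sigma$'s are Gaussian by assumption, so there your reduction to the variance is legitimate and matches the paper.)

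A secondary, more repairable issue is the ``term-by-term passage to the limit'' in the variance computation. The discrete Fourier coefficients $\widetilde f_n(w)$ are periodizations of $\widehat f$ and do not decay uniformly over $w\in\Z^d_n$ the way $\widehat f(w)$ does over $\Z^d$, and for $d\ge 4$ the weights $\|w\|^{-4}$ are not summable on their own, so one cannot simply dominate and pass to the limit. The paper handles exactly this point by inserting a Schwartz mollifier $\widehat{\rho_\kappa}$, proving the mollified sums converge to $\|f\|^2_{-1}$ (equation \eqref{eq-mollified-goes-to-norm}) and that the mollification error vanishes (equation \eqref{eq-error-in-mollification}). You correctly identify the uniform two-sided bound $|\lambda_w|\asymp\|w\|^2/n^2$ as the tool for tightness, but you would need it (or the mollification scheme) in the variance step as well; otherwise the interchange of limits is unjustified precisely in the supercritical dimensions where the theorem is most interesting.
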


Remark that for the (A-ind) case the Fourier multiplier $\widehat{K}$ is constant equal to 1 and recall that we use the formal notation \eqref{FGF} for the limiting fields $\Delta^{-1}W$ resp. $\Delta^{- (1+\delta)}W$.

The fields obtained in the previous theorem have regularity properties of bi-Laplacian models or smoother interface models for $\Delta^{- (1+\delta)}W $. Let us remark that the main Theorem 1 proved in \citet{JanFourier} is even stronger, more precisely, the Gaussian limiting fields have variance given by
\[
\mathbb{E}(\langle \Xi , f \rangle^2) = \sum_{z\in \Z^d \setminus \{o\}} \widehat{K}(z)\|z\|^{-4} |\widehat{f}(z)|^2
\]
and depending on a Fourier multiplier which is related to the covariance kernel of the initial distribution of $\sigma$. Choosing $\mathbb{E}(\sigma(x)\sigma(y))=K_n(x-y)$ such that $\widehat{K}(z) = \|z\|^{-4\delta}$ on $\Z^d\setminus 0$ provides the field $\Delta^{- (1+\delta)}W $ for $\delta>0$. If the initial covariance kernel is summable on $\Z^d$ then $\widehat{K}(z)=Const$ and we obtain the bi-Laplacian model.

The following theorem states the scaling limit result for the long-range case. Note that in particular we can obtain a whole class of limiting fields with different regularity properties including the GFF on the torus. The odometer approach presents an alternative way to construct for example a Gaussian Free Field on the torus.

\begin{theorem}[Scaling limit for $\star=l.r$]\label{theo_lr}
Let the initial sandpile configuration $s$ be defined as in \eqref{ini_s} with $\sigma$'s satisfying (A-ind). Define the scaling $a_n^{l.r.}$ as
\[
a_n^{l.r.} = \begin{cases}
n^{\frac{d-2\alpha}{2}} & \text{ if } \alpha < 2 \\
n^{\frac{d-2\alpha}{2}}\log(n) & \text{ if } \alpha = 2 \\
n^{\frac{d-4}{2}} & \text{ if } \alpha > 2.
\end{cases}
\]
Then the formal field defined in \eqref{xi} converges in law to
\[
a^{l.r.}_n \Xi^{r.r}_n \rightarrow \Delta^{-\gamma/2}W.
\]
The convergence holds in the Sobolev space $H_{-\epsilon}(\T^d)$ with topology induced by the norm $\|\cdot \|_{-\epsilon}$ for $\epsilon > \max\{d/4+ \gamma/2,d/2\}$ and $\gamma=\min\{2,\alpha\}$.
\end{theorem}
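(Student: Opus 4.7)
The plan is to follow the same architecture as the proof of Theorem \ref{theo_nn} sketched in \citet{CHR17,LongRange}, but with the eigenvalue asymptotics for the long--range Laplacian replacing those of the graph Laplacian. First, by Proposition \ref{prop:dislaw} the odometer $u^{l.r.}_\infty$ equals in law $\eta^{l.r.} - \min_z \eta^{l.r.}(z)$, and since the formal field $\Xi^{l.r.}_n$ is tested against zero--mean functions $f \in \mathcal{F}$, the additive constant $\min \eta^{l.r.}$ drops out. Thus it suffices to study the Gaussian field $\widetilde{\Xi}^{l.r.}_n(x) := \sum_{z \in \T^d_n} \eta^{l.r.}(z\cdot n)\mathbf 1_{B(z,1/2n)}(x)$. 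Because everything is centered Gaussian, convergence in law in $H_{-\epsilon}(\T^d)$ reduces, after a tightness argument, to the pointwise (in $f$) convergence of the characteristic functional, i.e.\ of the variance $\mathrm{Var}\langle a_n^{l.r.} \widetilde{\Xi}^{l.r.}_n, f\rangle$ for $f \in \mathcal{F}$.

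Next I would diagonalize the covariance using the Fourier basis $\{\psi_w\}_{w \in \Z^d_n}$. Using the spectral representation of the Green's function $g^{l.r.}$ associated to the generator $-(-\Delta_n)^{\alpha/2}$, Proposition \ref{prop:dislaw} gives
\[
\mathbb{E}[\eta^{l.r.}(x)\eta^{l.r.}(y)] \;=\; \frac{1}{n^{2d}}\sum_{w \in \Z^d_n\setminus\{0\}} \frac{\psi_w(x)\overline{\psi_w(y)}}{(\lambda^{(\alpha,n)}_w)^2}.
\]
Substituting this into $\mathrm{Var}\langle a_n^{l.r.}\widetilde\Xi^{l.r.}_n, f\rangle$ and using that the integrals $I_n f(z) := \int_{B(z,1/2n)} f$ satisfy $\widehat{I_n f}(w) = n^{-d}\widehat{f_n}(w)\cdot(1+o(1))$ with $\widehat{f_n}(w) \to \widehat f(w)$ for each fixed Fourier mode $w \in \Z^d$, the variance becomes a weighted Fourier sum
\[
(a_n^{l.r.})^2 \sum_{w\in\Z^d_n\setminus\{0\}} (\lambda^{(\alpha,n)}_w)^{-2}\,|\widehat{I_n f}(w)|^2.
\]

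The core analytic input is then Lemma 4.7 of \citet{LongRange}, which states $\lambda^{(\alpha,n)}_w \asymp \|w/n\|^{\gamma}$ with $\gamma=\min\{2,\alpha\}$ and with logarithmic corrections exactly at $\alpha=2$. Splitting the Fourier sum into a low--frequency part $\|w\| \le R$ (for a truncation $R$ eventually sent to infinity, with $R \ll n$) and a high--frequency part, on the low--frequency part the eigenvalues are close to their asymptotic form $(\|w\|/n)^{2\gamma}$ and, after multiplying by $(a_n^{l.r.})^2 = n^{d-2\gamma}$ (or its logarithmic variant at $\alpha=2$), the sum becomes a Riemann/lattice sum converging to $\sum_{\xi\in\Z^d\setminus\{0\}}\|\xi\|^{-2\gamma}|\widehat f(\xi)|^2 = \|f\|^2_{-\gamma/2}$, which is exactly the variance of $\langle \Delta^{-\gamma/2}W, f\rangle$. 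The high--frequency tail has to be shown to be negligible: this uses the smoothness of $f$ together with the lower eigenvalue bound to control $(\lambda^{(\alpha,n)}_w)^{-2}|\widehat f(w)|^2$ uniformly in $n$.

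Finally, tightness in $H_{-\epsilon}(\T^d)$ with $\epsilon > \max\{d/4+\gamma/2,d/2\}$ follows from the same Fourier computation applied to the Sobolev norm itself: bound $\mathbb{E}\|a_n^{l.r.}\widetilde\Xi^{l.r.}_n\|_{-\epsilon}^2$ by a Fourier sum of $(\lambda^{(\alpha,n)}_w)^{-2}\|w\|^{-4\epsilon}$ (with the appropriate $a_n^{l.r.}$ prefactor), and verify summability using the range of $\epsilon$ together with the eigenvalue estimate. I expect the main obstacle to be the \emph{case analysis around $\alpha=2$ and above}: the eigenvalue $\lambda^{(\alpha,n)}_w$ ceases to match $\|w/n\|^\alpha$ once $\alpha>2$ because the second--moment heat kernel takes over, yielding the saturation $\gamma = 2$ and the logarithmic prefactor at $\alpha=2$. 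Turning the distance estimates of Proposition \ref{prop-upper-bounds-gaussian-distance} and Lemma 4.7 of \citet{LongRange} into sufficiently sharp two--sided spectral bounds for the Riemann--sum step, uniformly across the three regimes in the definition of $a_n^{l.r.}$, is the delicate technical core that must be managed with care.
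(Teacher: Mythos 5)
Your outline of the second--moment computation and the tightness step matches the paper's strategy: reduce to $\eta^{l.r.}$ via Proposition \ref{prop:dislaw} (the minimum drops out against zero--mean test functions), diagonalize in the Fourier basis, use the eigenvalue asymptotics $\lambda^{(\alpha,n)}_w \asymp \|w/n\|^{\gamma}$ from Lemma 4.7 of \citet{LongRange} with the logarithmic correction at $\alpha=2$, and pass to the lattice sum $\sum_{\xi\neq 0}\|\xi\|^{-2\gamma}|\widehat f(\xi)|^2$ (the paper does the low/high--frequency control via a mollifier $\rho_\kappa$ rather than a sharp cutoff, but that is a cosmetic difference). The tightness argument via Fourier bounds on $\mathbb{E}\|a_n^{l.r.}\Xi_n^{l.r.}\|^2_{-\epsilon}$ combined with Rellich's theorem is also the paper's route.

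However, there is a genuine gap at the step where you write ``Because everything is centered Gaussian, convergence in law \dots reduces \dots to the convergence of the variance.'' Assumption (A-ind) only requires the $\sigma$'s to be i.i.d.\ with mean zero and unit variance; they are \emph{not} assumed Gaussian, so the prelimit field $\sum_x k_n(x)\sigma(x)$ is not Gaussian and convergence of its variance does not identify the limiting law. This is precisely where the bulk of the paper's work lies: it first proves convergence of \emph{all} moments to the Gaussian moments $(2m-1)!!\,[\mathbb{E}\langle\Xi^\star,f\rangle^2]^{m/2}$ via a combinatorial partition argument exploiting independence (Proposition \ref{prop-convergence-moments-bounded-case}), valid when $\sigma$ has all moments finite, and then removes that extra hypothesis with the truncation scheme $\sigma = \sigma\1_{\{|\sigma|<R\}} + \sigma\1_{\{|\sigma|\ge R\}}$ and Billingsley's Theorem \ref{thm:billy}. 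Your proposal contains neither of these ingredients (nor an alternative such as a Lindeberg--Feller central limit theorem for the triangular array of weighted sums $\sum_x k_n(x)\sigma(x)$, which would require verifying a negligibility condition on $\max_x |k_n(x)|$), so as written it only establishes the result when the $\sigma$'s are themselves Gaussian, which is strictly weaker than the theorem.
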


The proofs involve showing finite dimensional convergence and tightness. For ensuring tightness we will need to assume that $\epsilon > d/2$. On the other side for the AWS to be well-defined we will need that for the nearest neighbour case we have that $\epsilon > 1+d/4$ and in the long-range case $\epsilon > \gamma/2 + d/4$.

The kernels of the operators will be specified in Theorem \ref{kernel} below.
It is a combination of Corollary 1 in \citet{LongRange} and Theorem 3 of \citet{CHR17} which follows from the proofs of the scaling limit results in Theorem's \ref{theo_lr}-\ref{theo_nn}. We added a result for the kernel of the operator in the correlated case (A-cor) which is not specified in \citet{JanFourier} but follows from analogous computations as before in \citet{CHR17}. We will distinguish between two cases, below the critical dimension $d^{\star}_c$ which is equal to $d_c^{nn}=4$ for $\star=n.n.$ and $d^{lr}_c=2\gamma$ for $\star=l.r.$ and above. Note that the kernel at the critical dimension $d=4$ for the nearest neighbour case and $d=2\gamma$ for the long-range case is still an open question. We expect some logarithmic behaviour to play a role.  

\begin{theorem}[Kernel of the fractional operator for  $d>d_c^{\star}$]\label{kernel}

\begin{enumerate}
\item[Case $\star=n.n.$, (A-ind):] The kernel $\mathbb{K}$ of the bi-Laplacian operator on the torus is a $L^1(\R^d)$-function  and is equal to
\[
\mathbb{K}(x,y) = c_d \|x-y\|^{4-d} + h_d(x-y)
\]   
where $h_d \in C^{\infty}(\R^d)$ and $c_d>0$ some constant depending on $d$.
\item[Case $\star=n.n.$, (A-cor):] Let us consider initial correlated Gaussian's $\sigma$ such that $\widehat{K}(\cdot)=\| \cdot \|^{-4\delta}$ with $\delta>0$. The kernel  $\mathbb{K}$  of the fractional field on the torus is a $L^1(\R^d)$-function such that
\[
\mathbb{K}(x,y) = c_{d,\delta} \|x-y\|^{4(1+\delta)-d} + h_{d,\delta}(x-y)
\]
for $h_{d,\delta} \in C^{\infty}(\R^d)$ and $c_{d,\delta}>0$ some constant depending on $d$ and $\delta$.
\item[Case $\star=l.r.$, (A-ind):] Let $\alpha \in (0,2)$. The kernel  $\mathbb{K}$  of the fractional field on the torus is a $L^1(\R^d)$ function such that
\[
\mathbb{K}(x,y) = c_{d,\delta} \|x-y\|^{2\alpha-d} + h_{d,\alpha}(x-y)
\]
for $h_{d,\alpha} \in C^{\infty}(\R^d)$ and $c_{d,\alpha}>0$ some constant depending on $d$ and $\alpha$.
\end{enumerate}
\end{theorem}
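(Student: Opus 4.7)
My plan is to express the kernel $\mathbb{K}$ via its Fourier series on $\T^d$ and then peel off a Riesz-type singularity at the diagonal. By Theorems~\ref{theo_nn}--\ref{theo_lr}, the limiting Gaussian field has covariance operator whose Fourier symbol on $\mathcal{F}$ is $\widehat{K}(\xi)\|\xi\|^{-4a}$, so, setting $\beta=2$ in case (i), $\beta=2(1+\delta)$ in case (ii), and $\beta=\alpha$ in case (iii), stationarity yields $\mathbb{K}(x,y)=K_0(x-y)$ with
\begin{equation*}
K_0(x) \;=\; \sum_{\xi\in\Z^d\setminus\{0\}} \|\xi\|^{-2\beta}\,\phi_\xi(x).
\end{equation*}
The goal is to exhibit the decomposition $K_0(x) = c_{d,\beta}\|x\|^{2\beta-d}+h(x)$ with $h\in C^\infty(\T^d)$.

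\textbf{Heat-kernel representation and Poisson summation.} The main technical device is the subordination identity $\|\xi\|^{-2\beta}=\Gamma(\beta)^{-1}\int_0^\infty t^{\beta-1}e^{-t\|\xi\|^2}\,dt$. Inserting it and interchanging sum with integral gives
\begin{equation*}
K_0(x) \;=\; \frac{1}{\Gamma(\beta)}\int_0^\infty t^{\beta-1}\bigl(\Theta_t(x)-1\bigr)\,dt, \qquad \Theta_t(x) := \sum_{\xi\in\Z^d} e^{-t\|\xi\|^2}\phi_\xi(x).
\end{equation*}
By Poisson summation $\Theta_t(x)=(\pi/t)^{d/2}\sum_{k\in\Z^d}e^{-\pi^2\|x+k\|^2/t}$, and extracting the $k=0$ term yields $K_0=I_0+h$ with
\begin{equation*}
I_0(x) \;=\; \frac{\pi^{d/2}}{\Gamma(\beta)}\int_0^\infty t^{\beta-1-d/2}\, e^{-\pi^2\|x\|^2/t}\,dt \;=\; c_{d,\beta}\,\|x\|^{2\beta-d},
\end{equation*}
the classical Riesz-kernel evaluation, finite precisely when $0<\beta<d/2$, i.e.\ when $d>d_c^\star$. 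The remainder $h$ is the sum over $k\in\Z^d\setminus\{0\}$ of translated Gaussian integrals evaluated at points $\|x+k\|\geq \tfrac12$ (in a neighbourhood of the fundamental domain), plus a smooth contribution from the $-1$ in $\Theta_t-1$; each term is $C^\infty$ and the series converges in every $C^N$-norm by rapid Gaussian decay, so $h\in C^\infty(\T^d)$. Combined with local integrability of $\|x\|^{2\beta-d}$ (since $2\beta-d>-d$), this gives $\mathbb{K}\in L^1(\R^d)$.

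\textbf{The three cases and the main obstacle.} Cases (ii) and (iii) fall under the same scheme: the correlated multiplier $\widehat{K}(\xi)=\|\xi\|^{-4\delta}$ merely shifts $\beta$ by $2\delta$, while the long-range case uses $\beta=\alpha\in(0,2)$ (no logarithmic correction thanks to Lemma~4.7 of \citet{LongRange} giving $\lambda_w^{(\alpha,n)}\asymp\|w/n\|^\alpha$). I expect the delicate step to be twofold. First, justifying the sum--integral interchange uniformly near $t=0$, where individual terms $e^{-t\|\xi\|^2}\to 1$ and absolute summability fails; this is routine once one works with the Poisson-summed form and splits the $t$-integral at, say, $t=1$, but requires care. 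Second, one must verify that the covariance of the limit field delivered by Theorems~\ref{theo_nn}--\ref{theo_lr} (which give only $H^K_{-\epsilon}$-convergence) really coincides with this explicit integral kernel when paired with test functions in $\mathcal{F}$; the Sobolev thresholds $\epsilon>d/4+\beta/2$ built into those theorems are exactly what is needed. At the critical dimension $d=d_c^\star$, $I_0$ diverges logarithmically, which is precisely the open case flagged in the statement.
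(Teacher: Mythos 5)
Your proposal is correct and follows the same route the survey indicates (and the cited references carry out): rewrite the variance of the limit field as a double integral against the Fourier series $\sum_{\xi\in\Z^d\setminus\{0\}}\widehat{K}(\xi)\|\xi\|^{-4a}\phi_\xi(x-y)$ and identify the kernel, with the Riesz singularity $c_{d,\beta}\|x-y\|^{2\beta-d}$ extracted exactly by the heat-kernel subordination and Poisson summation you describe, the constraint $\beta<d/2$ reproducing the supercritical condition $d>d_c^{\star}$ in all three cases. The one point to keep straight is that the $-1$ and the $k\neq 0$ Gaussian terms are not separately integrable over $t\in[1,\infty)$, so the smoothness of $h$ must be argued from the combined, exponentially small Fourier-side tail after splitting the $t$-integral at $t=1$ --- a bookkeeping step you already flag as requiring care.
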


\begin{theorem}[Kernel of the fractional operator for  $d<d_c^{\star}$]
\begin{enumerate}
\item[Case $\star=n.n.$, (A-ind):] For $x, y \in \T^d:$
$$\mathbb{K}(x,y) =  \sum_{v\in \Z^d \setminus \{o\}} \frac{e^{2\pi i (x-y)v}}{\|v\|^4}$$. 
\item[Case $\star=n.n.$, (A-cor):] Let us consider initial correlated Gaussian's $\sigma$ such that $\widehat{K}(\cdot)=\| \cdot \|^{-4\delta}$ with $\delta>0$. Then 
\[
\mathbb{K}(x,y) = \sum_{v\in \Z^d \setminus \{o\}} \frac{e^{2\pi i (x-y)v}}{\|v\|^{4(1+\delta)}}. 
\]
\item[Case $\star=l.r.$, (A-ind):] Let $\alpha \in (0,2)$. Then 
\[
\mathbb{K}(x,y) = \sum_{v\in \Z^d \setminus \{o\}} \frac{e^{2\pi i (x-y)v}}{\|v\|^{2\alpha}}. 
\]

\end{enumerate}
\end{theorem}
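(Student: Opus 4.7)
The plan is to derive the kernel directly from the spectral representation of the limit field identified by Theorems \ref{theo_nn} and \ref{theo_lr}, exploiting the fact that below the critical dimension the relevant Fourier series converges absolutely, so no regularisation step is required (in contrast to the case $d>d_c^\star$).

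First I would recall that in each of the three cases the scaling limit is $\Xi=(-\Delta)^{-a}W$ for an appropriate exponent: $a=1$ in the n.n., (A-ind) case; $a=1+\delta$ in the n.n., (A-cor) case; and $a=\gamma/2=\alpha/2$ in the l.r., (A-ind) case with $\alpha\in(0,2)$. The kernel $\mathbb{K}$ is characterised by
$$\mathbb{E}\bigl[\langle \Xi, f\rangle\,\overline{\langle \Xi, g\rangle}\bigr]=\iint_{\T^d\times\T^d} \mathbb{K}(x,y)\,f(x)\overline{g(y)}\,\mathrm{d}x\,\mathrm{d}y$$
for all smooth zero-mean test functions $f,g$. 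Since $W$ is spatial white noise, $\langle \Xi, f\rangle=\langle W,(-\Delta)^{-a}f\rangle$ is a centred Gaussian with variance $\|(-\Delta)^{-a}f\|_{L^2(\T^d)}^2$, and expanding in the Fourier basis $\{\phi_v\}$ of Section \ref{subsec-fourier-torus} together with Parseval and polarisation yields
$$\mathbb{E}\bigl[\langle \Xi, f\rangle\,\overline{\langle \Xi, g\rangle}\bigr] = \sum_{v\in\Z^d\setminus\{o\}} \|v\|^{-4a}\,\widehat{f}(v)\,\overline{\widehat{g}(v)}.$$

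The heart of the argument is that $d<d_c^\star$ translates precisely into $4a>d$ in each of the three cases ($4>d$, $4(1+\delta)>d$, and $2\alpha>d$ respectively), so $\sum_{v\neq o}\|v\|^{-4a}<\infty$. Consequently the series
$$\mathbb{K}(x,y):=\sum_{v\in\Z^d\setminus\{o\}} \frac{e^{2\pi i (x-y)\cdot v}}{\|v\|^{4a}}$$
converges absolutely and uniformly in $(x,y)$, defining a continuous function on $\T^d\times\T^d$. Substituting this representation into the double integral and swapping sum and integral (justified by absolute convergence and Fubini) identifies the inner integrals with $\widehat{f}(v)\overline{\widehat{g}(v)}$, so one recovers the Parseval expression above. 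Plugging in $a\in\{1,\,1+\delta,\,\alpha/2\}$ then yields the three stated formulae.

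The main technical point, mild in this regime, is the identification of $(-\Delta)^{-a}$ on zero-mean functions and the interchange of the infinite sum with the double integral, both of which are immediate by absolute convergence. The contrast with the $d>d_c^\star$ theorem is essential: there one must isolate a singular $\|x-y\|^{4a-d}$ part and a smooth remainder via a Poisson-summation or lattice-sum analysis, whereas here the critical dimension marks exactly the borderline for summability of $\|v\|^{-4a}$ on $\Z^d$, so no such decomposition is needed.
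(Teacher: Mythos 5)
Your proposal is correct and follows essentially the same route the paper indicates: rewrite the variance of the limit field in Fourier space as $\sum_{v\neq o}\|v\|^{-4a}\widehat f(v)\overline{\widehat g(v)}$, note that $d<d_c^\star$ makes $\sum_{v\neq o}\|v\|^{-4a}$ absolutely convergent so the Fourier series defines the kernel directly and Fubini identifies it in the double-integral representation, with no need for the singular-plus-smooth decomposition of the supercritical case. The paper only sketches this in a sentence ("rewriting the variance of the field as a double integral and identifying the kernel", with "only an implicit formula" below $d_c^\star$), so your write-up is a faithful, slightly more detailed version of the intended argument.
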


The proofs are based on rewriting the variance of the field as an double integral and identifying the kernel. A priori the kernel can be written as a sum in any dimension (compare with the previous theorem). The fact that in the supercritical dimensions $d>d^{\star}_c$ power-law decay of correlations appears is due to the existence of infinite volume Gibbs measures in $\R^d$ of the corresponding bi-Laplacian field or fractional field. 
In the sub-critical dimensions there is only an implicit formula for the kernel.

\subsection{Main ideas of the scaling limit proofs}

Let us describe the main ingredients and ideas of the proofs of Theorems \ref{theo_lr}-\ref{theo_nn}, based on \citet{CHR17, LongRange} and \citet{JanFourier}. 
Recall that we proved in Proposition \ref{prop:dislaw} that the odometer on the discrete torus is distributed as
\begin{equation*}
	\{u^{\star}_\infty(x)\}_{x \in \mathbb{Z}^d_n}
\sim 
	\Big\{
		\eta^{\star}(x)- \min_{z \in \mathbb{Z}^d_n}\eta^{\star}(z)
	\Big\}_{x \in \mathbb{Z}^d_n },
\end{equation*}
where  $\eta^{\star}(x)=\sum_{y\in \Z^d_n} g^{\star}(x,y)(s(y)-1)$ has mean 0 and covariance specified in \eqref{cov}. Since we have mean 0 test functions $f$, note the important simplification: 
\[
\begin{split}
\langle \Xi^{\star}_n , f\rangle & = \int_{\T^d} \left (\sum_{z\in \T^d_n} \1_{B(z,1/2n) }(x)u^{\star}_{\infty}(nz) \right ) f(x) dx \\
& = \int_{\T^d} \left (\sum_{z\in \T^d_n} \1_{B(z,1/2n) }(x)\eta^{\star}_{\infty}(nz) \right ) f(x) dx -  [\min_{y\in \Z^d_n} \eta^{\star}(y)] \int_{\T^d} \left (\sum_{z\in \T^d_n} \1_{B(z,1/2n) }(x)  \right ) f(x) dx\\
& = \int_{\T^d} \left (\sum_{z\in \T^d_n} \1_{B(z,1/2n) }(x)\eta^{\star}_{\infty}(nz) \right ) f(x) dx \\
& = \int_{\T^d} \left (\sum_{z\in \T^d_n} \1_{B(z,1/2n) }(x) \left [ \sum_{w\in \Z^d_n} g^{\star}(nz,w)\sigma(w) \right] \right ) f(x) dx
\end{split}
\]
We need to impose mean zero on the test function to get rid of the 0 eigenvalue of the (fractional) Laplacian(s) in order to be able to invert them. Indeed,
\[
\mathbb{E}(\eta_{\infty}^{\star}(x)\eta_{\infty}^{\star}(y)) = const + \sum_{z\in \mathbb{Z}^d_n\setminus \{o\}} \widehat{g}_x(z) \overline{\widehat{g}_y(z)} 
\]
and $const$ is a dimensional constant which does not depend on $x,y$. 
By abuse of notation we will write 
\[
\langle \Xi^{\star}_n , f\rangle = \int_{\T^d} \left (\sum_{z\in \T^d_n} \1_{B(z,1/2n) }(x) \left [ \sum_{w\in \Z^d_n} g^{\star}(nz,w)\sigma(w) \right] \right ) f(x) dx .
\]

\noindent
The proofs consist of two main steps.\\

\noindent
\textbf{Step 1:}  Tightness of $(\mathcal{L}(a_n^{\star} \Xi_n^{\star}))_{n\in \N}$ in $H_{-\epsilon}(\T^d)$ for $-\epsilon < -d/2$. \\

\noindent
\textbf{Step 2:} From the first step we argue that there exists $(n_k)_{k\in \N}$ such that 
\[
a^{\star}_{n_k} \Xi^{\star}_{n_k} \overset{\mathcal{L}} \longrightarrow \Xi^{\star}
\]
as $k\rightarrow \infty$ in $H_{-\epsilon}(\T^d)$. It remains to show that the limit is unique. It suffices by Section 2.1 of \citet{ledoux:talagrand} to prove that for $f\in \mathcal{F}$ we have that the characteristic functional
\begin{equation}\label{lim_chara}
\lim_{n\rightarrow \infty} \mathbb{E} (\exp(i \langle \Xi^{\star}_n, f \rangle)) = \Phi(f)
\end{equation}
 is the correct one with $\Phi$ is defined in \eqref{chara}. Let us give more details in the sequel.\\

\noindent
\textbf{Tightness:}
Let us give a sketch of the arguments used in order to prove tightness. Compare Section 4.2. in \citet{CHR17}, Section 3.1. from \citet{JanFourier} and Section 4 in \citet{LongRange}. 

We will use Rellich's theorem (\citet{Roe}) for showing tightness.
\begin{theorem}[Rellich's theorem]\label{thm-rellich}
    If $k_1<k_2$ the inclusion operator 
    $H^{k_2}(\mathbb{T}^d)\hookrightarrow H^{k_1}(\mathbb{T}^d)$ 
    is a compact linear operator.  
    In particular for any radius $R>0$, the closed ball 
    $\overline{B_{ H_{-\frac{\varepsilon}{2}}}(0,\,R)}$ is compact in 
    $H_{-\varepsilon}$.
\end{theorem}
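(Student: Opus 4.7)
The plan is to work with the Fourier characterization of the Sobolev norms on $\T^d$: for any $k \in \R$,
\[
\|f\|^2_{H^k(\T^d)} = \sum_{\xi \in \Z^d} (1+\|\xi\|^2)^k |\widehat f(\xi)|^2,
\]
and to establish compactness of the inclusion $H^{k_2}(\T^d)\hookrightarrow H^{k_1}(\T^d)$ by the standard diagonal-extraction plus frequency-truncation argument. The compactness statement for the ball of $H^{-\varepsilon/2}$ inside $H^{-\varepsilon}$ will then follow by specialization.

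First I would take a bounded sequence $(f_m)\subset H^{k_2}(\T^d)$ with $\|f_m\|_{H^{k_2}}\le C$. For every fixed $\xi\in\Z^d$ the estimate $|\widehat f_m(\xi)|^2 \le (1+\|\xi\|^2)^{-k_2}\|f_m\|_{H^{k_2}}^2$ shows that the scalar sequence $(\widehat f_m(\xi))_m$ is bounded in $\mathbb{C}$. By a diagonal extraction over the countable set $\Z^d$, I can pass to a subsequence, still denoted $(f_m)$, along which $\widehat f_m(\xi)\to a(\xi)$ for every $\xi$. Fatou's lemma applied to the Fourier series then produces a candidate limit $f:=\sum_\xi a(\xi)\phi_\xi$ with $\|f\|_{H^{k_2}}\le C$.

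Next I would prove that $f_m\to f$ in $H^{k_1}$ by splitting the sum at a large cutoff $N$:
\[
\|f_m-f\|_{H^{k_1}}^2 = \sum_{\|\xi\|\le N}(1+\|\xi\|^2)^{k_1}|\widehat f_m(\xi)-a(\xi)|^2 + \sum_{\|\xi\|>N}(1+\|\xi\|^2)^{k_1}|\widehat f_m(\xi)-a(\xi)|^2.
\]
The low-frequency piece is a finite sum and vanishes as $m\to\infty$ by pointwise convergence of Fourier coefficients. For the high-frequency tail I would exploit the gap $k_1<k_2$, factoring $(1+\|\xi\|^2)^{k_1}=(1+\|\xi\|^2)^{k_1-k_2}(1+\|\xi\|^2)^{k_2}$ and bounding the first factor by $(1+N^2)^{k_1-k_2}$ on $\|\xi\|>N$, to obtain the uniform bound $(1+N^2)^{k_1-k_2}(2C)^2$, which tends to $0$ as $N\to\infty$. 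Choosing $N$ large first and then $m$ large finishes the argument and shows the inclusion is compact.

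Finally, for the ``in particular'' assertion I would apply compactness with $k_1=-\varepsilon$ and $k_2=-\varepsilon/2$: the closed ball $\overline{B_{H^{-\varepsilon/2}}(0,R)}$ is bounded in $H^{-\varepsilon/2}$ and hence has relatively compact image in $H^{-\varepsilon}$. To upgrade to genuine compactness I would use that closed norm balls in a Hilbert space are weakly compact (Banach--Alaoglu together with reflexivity) and that compact continuous operators map weakly convergent sequences to norm-convergent ones, so the image is closed as well. The only mildly delicate point in the whole plan is the uniform-in-$m$ control of the high-frequency tail, but the strict inequality $k_2>k_1$ makes that step essentially immediate, so I do not foresee any serious obstacle.
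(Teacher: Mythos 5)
Your argument is correct. Note that the paper does not prove this statement at all: it is quoted as a classical result with a citation (to Roe's book) and used as a black box in the tightness argument, so there is nothing in the source to compare against step by step. Your proof is the standard one — Fourier characterization of the $H^k(\mathbb{T}^d)$ norms, boundedness of individual Fourier coefficients, diagonal extraction, Fatou to identify the limit in $H^{k_2}$, and the frequency cutoff with the factor $(1+N^2)^{k_1-k_2}\to 0$ giving uniform control of the high-frequency tail — and each step checks out. The upgrade from relative compactness to genuine compactness of the closed ball via weak compactness of balls in a Hilbert space and the fact that a compact operator sends weakly convergent sequences to norm-convergent ones is also sound; this is exactly the point one needs to make the ``in particular'' clause literally true rather than just ``relatively compact''.
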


Pick $k_1=-\epsilon$ and $k_2=-\frac{\epsilon}{2}$ and choose $-\epsilon > -\frac{d}{2}$.
Observe that
\[
    \| a^{\star}_n{\Xi^{\star}_{n}}\|_{L^2(\mathbb{T}^d)}^2
    = (a^{\star}_n)^2 \sum_{x,\,y\in \mathbb{Z}^d_n}
    g^{(\star)}(x,y)\sigma(x)
    \sum_{x',\,y'\in \mathbb{Z}^d_n }g^{(\star)}(x',y')\sigma(x')
\]
is a.s. finite, as, for any fixed $n$, it is a finite combination of essentially bounded random variables. Therefore 
$\Xi^{\star}_{n}\in L^2(\mathbb{T}^d) \subset 
 H_{-\varepsilon}(\mathbb{T}^d)$  a.s. It will be enough 
to show that, for all $\delta>0$, there exists a $R=R(\delta)>0$ such that
\[
     \sup_{n\in \mathbb{N}}
     \mathbb{P} 
     \Big(\| a^{\star}_n\Xi^{\star}_{n}\|_{H_{-\frac{\varepsilon}{2}}}\geq R\Big)\le \delta.
 \]
Using Markov's inequality and exploiting the representation of the field $\Xi_n^{\star}$  in Fourier space, we can bound after some longer computations

\[
 \sup_{n\in \mathbb{N}} \mathbb{E} 
 \Big[\| a^{\star}_n \Xi^{\star}_{n}\|_{ H_{-\frac{\varepsilon}{2}}}^2\Big]
 \le C.
\]

\noindent
\textbf{Finite-dimensional convergence.}
The first question is how do we guess the right scaling? To determine $a_n$ we do the following rough computation for (A-ind) and general $\star$: pick $x,y\in \T^d_n$ and compute the covariance of the odometer

\[
\begin{split}
\mathbb{E}(u^{\star}_{\infty}(n x) u^{\star}_{\infty}(n y)) &= \sum_{w \in \Z^d_n} g^{\star}(xn,w)  g^{\star}(w,yn) \\
& \approx n^d \sum_{w\in \Z^d_n \setminus o} \widehat{g^{\star}}_{xn}(w) \overline{ \widehat{g^{\star}}_{yn}(w)} \\
& \approx n^{-d} \sum_{w\in \Z^d_n \setminus o} \frac{e^{2\pi in w(x-y)}}{|\lambda^{\star}_w|^2}
\end{split}
\]

Now using roughly that 

\begin{equation}\label{ev}
\lambda^{\star}_w \approx \begin{cases} \left \| \frac{w}{n}\right \|^2 & \text{ if }  \star=n.n \\ 
\left \| \frac{w}{n}\right \|^{2}\log(n) & \text{ if }  \star=l.r, \gamma= 2 \\
\left \| \frac{w}{n}\right \|^{\gamma} & \text{ if }  \star=l.r, \gamma=\min\{2,\alpha\}, \gamma\neq 2\\
\end{cases}
\end{equation}

we argue 
\[
\mathbb{E}(u^{\star}_{\infty}(n x) u^{\star}_{\infty}(n y)) \approx  \begin{cases} 
 n^{4-d} \sum_{w\in \Z^d_n \setminus o}\frac{e^{2\pi in w(x-y)}}{\|w\|^4} & \text{ if }  \star=n.n \\ 
n^{2-d}\log(n)  \sum_{w\in \Z^d_n \setminus o} \frac{e^{2\pi in w(x-y)}}{\|w\|^{2}} & \text{ if }  \star=l.r, \gamma=2 \\
n^{\gamma-d}  \sum_{w\in \Z^d_n \setminus o} \frac{e^{2\pi in w(x-y)}}{\|w\|^{\gamma}} & \text{ if }  \star=l.r, \gamma=\min\{2,\alpha\}, \gamma\neq 2.
\end{cases}
\]
For proving convergence to the characteristic functional we will first prove moment convergence for initial $\sigma$'s whose moments exist. We state the following Proposition 4.16 from \citet{LongRange}. 

\begin{proposition}\label{prop-convergence-moments-bounded-case}
    Assume $\mathbb{E}[\sigma]=0, \mathbb{E}[\sigma^2]=1$ and that 
    $ \mathbb{E}|\sigma|^k < \infty$ for all $k \in \mathbb{N}$. Then for all $m \geq 1$
    and for all $f \in C^\infty (\mathbb{T}^d)/\sim$ with zero average, the following 
    limits hold:
    \begin{equation}
        \lim_{n \to \infty} \mathbb{E}[\langle a^{\star}_n\Xi^{\star}_{n}, f \rangle^m ]
        =
        \begin{cases}
            (2m-1)!! [\mathbb{E}(\langle \Xi^{\star}, f \rangle^2)]^{m/2}, & m \in 2\mathbb{N} \\
            0, & m \in 2\mathbb{N} +1.
        \end{cases}
    \end{equation}
\end{proposition}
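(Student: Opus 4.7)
The plan is the direct moment method, combined with a Lindeberg-type sup-norm bound on the coefficients of a natural linear-form representation of $X_n := \langle a^{\star}_n \Xi^{\star}_n, f\rangle$.

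First, since $f$ has zero average, the $\min_z \eta^{\star}(z)$ correction from Proposition~\ref{prop:dislaw} drops out when tested against $f$ (as spelled out in the "Main ideas" section), so one can write the target as a linear form in the i.i.d.\ noise:
\[
X_n = \sum_{w \in \mathbb{Z}^d_n} b_n(w)\, \sigma(w),
\qquad
b_n(w) := a^{\star}_n \sum_{z \in \mathbb{T}^d_n} g^{\star}(nz, w) \int_{B(z,\,1/(2n))} f(x)\,\De x.
\]
By the variance computations underlying Theorems~\ref{theo_nn} and~\ref{theo_lr}, $\sum_w b_n(w)^2 = \var(X_n) \to v^{\star} := \EE[\langle \Xi^{\star}, f\rangle^2]$. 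Since the $\sigma(w)$'s are i.i.d.\ with $\EE[\sigma]=0$, $\EE[\sigma^2]=1$, one expands the $m$-th moment by grouping indices with equal argument: for a partition $\pi \in P(m)$ of $\{1,\dots,m\}$ with $|\pi|$ blocks, set $\mu_\pi := \prod_{B \in \pi} \EE[\sigma^{|B|}]$, and
\[
\EE[X_n^m] = \sum_{\pi \in P(m)} \mu_\pi \sum_{\substack{v_1,\ldots,v_{|\pi|}\in \mathbb{Z}^d_n \\ \text{pairwise distinct}}} \prod_{B \in \pi} b_n(v_B)^{|B|}.
\]
Any partition containing a singleton block is killed because $\mu_\pi$ carries the factor $\EE[\sigma]=0$, so only partitions with all blocks of size $\ge 2$ survive; in particular, for odd $m$ such partitions necessarily contain a block of size $\ge 3$.

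The key analytic input is the Lindeberg-type bound $\|b_n\|_\infty := \max_{w}|b_n(w)| \to 0$. Given this, for any block of size $\ell \ge 3$,
\[
\sum_v |b_n(v)|^\ell \le \|b_n\|_\infty^{\ell-2} \sum_v b_n(v)^2 = o(1),
\]
so every partition containing a block of size $\ge 3$ contributes $o(1)$, which at once yields $\EE[X_n^m] \to 0$ for odd $m$. For even $m=2p$, only the $(2p-1)!!$ pair partitions survive in the limit, and each contributes
\[
\sum_{v_1,\ldots,v_p \text{ distinct}} \prod_{j=1}^{p} b_n(v_j)^2 = \Big(\sum_v b_n(v)^2\Big)^{p} - O\!\Big(\|b_n\|_\infty^{2} \big(\sum_v b_n(v)^2\big)^{p-1}\Big) \longrightarrow (v^{\star})^{p},
\]
where the error comes from subtracting collision (diagonal) contributions, each controlled by $\|b_n\|_\infty \to 0$. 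Summing over pair partitions yields exactly $(2p-1)!!\,(v^{\star})^{p}$, as required.

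The main obstacle is the sup bound $\|b_n\|_\infty \to 0$. Using the Fourier diagonalisation $g^{\star}(x,w) = n^{-d}\sum_{\xi\neq 0} (\lambda^{\star}_\xi)^{-1} \psi_\xi(x-w)$ and the eigenvalue asymptotics \eqref{ev}, one bounds $|b_n(w)| \le C a^{\star}_n n^{-d}\sum_{\xi\neq 0} |\lambda^{\star}_\xi|^{-1} |\widehat{f_n}(\xi)|$. Smoothness of $f$ gives rapid decay of $\widehat{f_n}$, and the choice of $a^{\star}_n$ in Theorems~\ref{theo_nn}--\ref{theo_lr} is precisely tuned so that $\|b_n\|_\infty = O(n^{-d/2})$, up to logarithmic corrections at the critical values of $\alpha$; this is strictly smaller than $\sqrt{\var(X_n)}$ by the Lindeberg factor $n^{-d/2}$ expected from $n^d$ roughly equal-weight summands. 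The estimate relies on the same Green's-function technology already developed for Proposition~\ref{prop-upper-bounds-gaussian-distance} (and its nearest-neighbour counterpart in \citet{LMPU}), so no ingredients beyond those used for the variance are required; once the sup bound is in hand, the combinatorial expansion above is routine.
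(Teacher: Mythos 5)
Your proposal is correct and follows essentially the same route as the paper: the partition (moment-method) expansion of $\mathbb{E}[X_n^m]$ in the linear-form representation $X_n=\sum_w b_n(w)\sigma(w)$, with singleton blocks killed by centering and blocks of size $\ge 3$ killed by eigenvalue/Green's-function estimates --- your Lindeberg bound $\|b_n\|_\infty = O(n^{-d/2})$ together with $\sum_w b_n(w)^2\to \mathbb{E}(\langle \Xi^{\star},f\rangle^2)$ is precisely the quantitative content of the paper's appeal to ``properties of the eigenvalues and independence of the $\sigma$'s''. Note only that for even $m=2p$ the Gaussian moment constant you obtain, $(2p-1)!!=(m-1)!!$, is the correct one; the $(2m-1)!!$ appearing in the stated proposition is a typo.
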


Note that it is more general than the corresponding Proposition 13 in \citet{CHR17} which assumes bounded $\sigma$'s almost surely instead of finite moments. 
Let us focus on the second moment and give a sketch for the proof for higher moments.
For the second moment observe that for $f\in \mathcal{F}$,
\begin{equation}\label{sec_mom}
\begin{split}
\mathbb{E}\left[\la\Xi^{\star}_n,\,f \ra^2 \right]&=  n^{-2d}(a^{\star}_n)^2 \sum_{z,\,z'\in \T_n^d} f(z)f(z') \mathbb{E}[ \eta^{\star}(nz) \eta^{\star}(nz')]+\mathbb{E}\left[R_n(f)^2\right]\\
&+\mathbb{E}\left[n^{-d}a_n^{\star} \sum_{z\in \T_n^d} f(z)\eta^{\star}(nz)R_n(f)\right ]
\end{split}
\end{equation}
where the reminder is equal to
\[
R_n(f):= a_n^{\star} n^{-d} \sum_{z\in \T_n^d} \eta^{\star}(nz)\left(\int_{B(z,\,\frac{1}{2n})}n^d f(x) dx-f(z)\right).
\]
The key observation here is that the first term on the R.H.S. of \eqref{sec_mom} gives the desired variance and the remaining terms converge to 0 in $L^2(\T^d)$. Indeed we have,
\begin{proposition}
For any $f\in \mathcal{F}$, 
\[
\lim_{n\rightarrow \infty}\mathbb{E}\left[R^2_n(f)\right ] =0, \, \, \lim_{n\rightarrow \infty}\mathbb{E}\left[n^{-d}a_n^{\star} \sum_{z\in \T_n^d} f(z)\eta^{\star}(nz)R_n(f)\right ]=0.
\]
\end{proposition}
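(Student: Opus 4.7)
The key input is the smoothness of $f$. Because $B(z,1/2n)$ is an $\ell^\infty$-ball symmetric about $z$, a second-order Taylor expansion of $f\in C^\infty(\T^d)$ kills the linear term and yields
\[
B_n(z):=\int_{B(z,1/2n)} n^d f(x)\,dx - f(z) = \frac{\Delta f(z)}{24\,n^{2}} + r_n(z),
\]
uniformly in $z\in\T^d_n$, with $\|r_n\|_\infty = \O{n^{-4}}$ and implicit constants depending only on $\|\nabla^{\,4} f\|_\infty$. Introducing the Riemann-sum functional $M_n(g):= a_n^\star\, n^{-d}\sum_{z\in\T^d_n} \eta^\star(nz)\,g(z)$, this decomposition rewrites
\[
R_n(f) = \frac{1}{24\,n^2}\,M_n(\Delta f) + M_n(r_n),
\]
so the first limit reduces to a uniform-in-$n$ second-moment bound on $M_n(\cdot)$.

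To control $\mathbb{E}[M_n(g)^2]$, I would pass to the Fourier basis $\{\psi_w\}$ of $\ell^2(\Z^d_n)$. In case (A-ind) one has $\widehat{\eta^\star}(w) \propto \widehat\sigma(w)/\lambda^\star_w$ for $w\neq 0$ and $\mathbb{E}|\widehat\sigma(w)|^2 = n^{-d}$, so Parseval gives
\[
\mathbb{E}[M_n(g)^2]\,\asymp\,(a_n^\star)^2\!\!\sum_{w\in\Z^d_n\setminus\{0\}}\!\!n^{-d}|\lambda^\star_w|^{-2}\,|\widehat g(w)|^2.
\]
The crucial estimate is the uniform-in-$n$ bound $(a_n^\star)^2 n^{-d}|\lambda^\star_w|^{-2}\le C$, which follows from \eqref{ev}, the lower bound $\|w/n\|\ge 1/n$, and the definition of the scaling $a_n^\star$. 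Combined with the Plancherel identity $\sum_w |\widehat g(w)|^2 = n^{-d}\sum_z |g(z)|^2\le \|g\|_\infty^2$, this delivers $\sup_n \mathbb{E}[M_n(g)^2]\le C\|g\|_\infty^2$ for every bounded $g:\T^d_n\to\mathbb{R}$. Applied to $g=\Delta f$ and $g=r_n$, the triangle inequality yields
\[
\mathbb{E}[R_n^2(f)]\le 2C\Bigl(\frac{\|\Delta f\|_\infty^2}{(24)^2 n^4} + \|r_n\|_\infty^2\Bigr) = \O{n^{-4}},
\]
establishing the first limit. Case (A-cor) is handled identically after inserting the factor $\widehat K_n(w)$ in $\mathbb{E}|\widehat\sigma(w)|^2$.

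The cross term follows from Cauchy--Schwarz,
\[
\Bigl|\mathbb{E}\bigl[M_n(f)\, R_n(f)\bigr]\Bigr|\le \bigl(\mathbb{E}[M_n(f)^2]\bigr)^{1/2}\,\bigl(\mathbb{E}[R_n(f)^2]\bigr)^{1/2},
\]
the first factor being $\O{1}$ by the second-moment bound (applied to $g=f$) and the second factor vanishing by the previous paragraph. The principal technical obstacle is the supremum bound $(a_n^\star)^2 n^{-d}|\lambda^\star_w|^{-2}\le C$: in the nearest-neighbour case it follows from the standard Riemann-sum comparison for \eqref{eq:eigenvalues}, but in the long-range case it depends on the refined eigenvalue asymptotics of Lemma 4.7 in \citet{LongRange}, and at the critical dimensions $d=d_c^\star$ the logarithmic corrections hidden in \eqref{ev} must be tracked with care.
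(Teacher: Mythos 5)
Your argument is correct and follows essentially the same route as the proofs referenced in the paper (Proposition 6 of \citet{CHR17} and its analogues): pass to Fourier space, use Parseval together with the uniform bound $(a_n^{\star})^2 n^{-d}|\lambda_w^{\star}|^{-2}\le C$ coming from the eigenvalue asymptotics and the choice of $a_n^{\star}$, and dispose of the cross term by Cauchy--Schwarz. The only cosmetic difference is that your second-order Taylor expansion of $\int_{B(z,1/2n)}n^d f - f(z)$ is more than needed; the crude bound $\O{n^{-1}}$ from smoothness of $f$ already suffices once the uniform second-moment bound on $M_n$ is in place.
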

The previous proposition corresponds to Propositions 6 in \citet{CHR17}, 4.16 in \citet{LongRange} and 9 in \citet{JanFourier}. The proof uses Cauchy-Schwartz inequality.
It remains to show that (Proposition 5 in \citet{CHR17}, 4.17 in \citet{LongRange} and Proposition 8 in \citet{JanFourier}).
\begin{proposition}\label{prop:var}
For $f\in \mathcal{F}$,
\[
\lim_{n\rightarrow \infty} n^{-2d} (a^{\star}_n)^2 \sum_{z,\,z'\in \T_n^d} f(z)f(z') \mathbb{E}[ \eta^{\star}(nz)\eta^{\star}(nz')] = \begin{cases} \| f \|^2_{-1}& \text{ if } \star=n.n., (A-ind) \\ 
\| f \|^2_{K, -1}& \text{ if } \star=n.n., (A-cor) \\
\| f \|^2_{-\gamma/2}& \text{ if } \star=l.r., (A-ind), \gamma=\{2,\alpha\}.
\end{cases}
\]
\end{proposition}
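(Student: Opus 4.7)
The plan is to evaluate the double sum on the torus via Parseval's identity. First I would use the representation $\eta^{\star}(x) = \sum_{y \in \Z_n^d} g^{\star}(x,y)\sigma(y)$ (which is valid up to constants against mean-zero test functions $f$), so that under (A-ind),
\[
\mathbb{E}[\eta^{\star}(nz)\eta^{\star}(nz')] = \sum_{y \in \Z_n^d} g^{\star}(nz,y)\, g^{\star}(y,nz'),
\]
while under (A-cor) an extra factor $K_n(y - y')$ appears by the Bochner-type lemma. Diagonalising $g^{\star}$ in the Fourier basis $\{\psi_w\}$ of Section \ref{subsec-fourier-torus} with eigenvalues $\lambda_w^{\star}$, and using the orthogonality of the $\psi_w$, this simplifies to
\[
\mathbb{E}[\eta^{\star}(nz)\eta^{\star}(nz')] = \frac{1}{n^d}\sum_{w \in \Z_n^d \setminus \{0\}} \frac{\psi_w(n(z-z'))}{(\lambda_w^{\star})^2}
\]
in the (A-ind) cases, with an additional factor $\widehat{K}_n(w)$ in (A-cor).

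Next I would apply Parseval once more to the outer sum. Writing $f_n(z) := f(z/n)$ on $\Z_n^d$, a standard calculation gives
\[
n^{-2d}\sum_{z,z' \in \T_n^d} f(z)f(z')\,\mathbb{E}[\eta^{\star}(nz)\eta^{\star}(nz')] = \frac{1}{n^d}\sum_{w \in \Z_n^d \setminus \{0\}} \frac{|\widehat{f_n}(w)|^2}{(\lambda_w^{\star})^2} \cdot \chi_n(w),
\]
where $\chi_n(w) = 1$ in (A-ind) and $\chi_n(w) = \widehat{K}_n(w)$ in (A-cor). Now I would insert the eigenvalue asymptotics \eqref{ev}: $|\lambda_w^{\star}|$ behaves like $\|w/n\|^2$ for $\star = n.n.$ and like $\|w/n\|^{\gamma}$ (up to a $\log n$ when $\gamma = 2$) for $\star = l.r.$. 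Multiplying by $(a_n^{\star})^2$, chosen precisely to cancel the powers of $n$ arising from these asymptotics, one obtains expressions of the form
\[
\sum_{w \in \Z_n^d \setminus \{0\}} \frac{|\widehat{f_n}(w)|^2}{\|w\|^{4}},\qquad \sum_{w \in \Z_n^d \setminus \{0\}} \frac{|\widehat{f_n}(w)|^2\,\widehat{K}_n(w)}{\|w\|^{4}},\qquad \sum_{w \in \Z_n^d \setminus \{0\}} \frac{|\widehat{f_n}(w)|^2}{\|w\|^{2\gamma}}.
\]

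Finally, to pass to the limit I would use the pointwise convergence $\widehat{f_n}(w) \to \widehat{f}(w)$ for each fixed $w$ (noted in Section \ref{subsec-fourier-torus}) together with a dominated convergence argument: smoothness of $f$ gives $|\widehat{f}(w)| = O(\|w\|^{-M})$ for arbitrarily large $M$, which provides an $n$-uniform summable majorant against the polynomial weight $\|w\|^{-4}$ (or $\|w\|^{-2\gamma}$, or $\|w\|^{-4}\widehat{K}_n(w)$ using the prescribed multiplier $\widehat{K}(w) = \|w\|^{-4\delta}$). The resulting limits are precisely $\sum_{v \in \Z^d \setminus \{0\}} \|v\|^{-4}|\widehat{f}(v)|^2 = \|f\|_{-1}^2$, and analogously $\|f\|_{K,-1}^2$ and $\|f\|_{-\gamma/2}^2$.

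The main obstacle is the quantitative control of the eigenvalue asymptotics \eqref{ev} together with the error $|\widehat{f_n}(w) - \widehat{f}(w)|$ in a regime uniform in $n$. The ratio $(a_n^{\star})^2/(\lambda_w^{\star})^2$ must be shown to be bounded by $C\|w\|^{-2\gamma}$ (respectively $C\|w\|^{-4}$) uniformly in $n$ and $w \in \Z_n^d \setminus \{0\}$, including the near-boundary modes $\|w\| \sim n$ where the sine factors entering $\lambda_w^{\star}$ deviate most from the continuum behaviour. This is precisely where Lemma 4.7 of \citet{LongRange} and its nearest-neighbour analogue are used; at the critical value $\gamma = 2$ the additional $\log n$ in the scaling exactly compensates for the logarithmic correction to $\lambda_w^{l.r.}$. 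Handling (A-cor) requires in addition that the $n$-th Fourier multiplier $\widehat{K}_n(w)$ approximates $\|w\|^{-4\delta}$ sufficiently uniformly, which is built into the very definition of the correlated initial configuration.
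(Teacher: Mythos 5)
Your overall strategy coincides with the paper's: diagonalise the Green's function in the Fourier basis, use Parseval to reduce the double sum to $\sum_{w\in\Z^d_n\setminus\{0\}}|\widehat{f_n}(w)|^2\,\chi_n(w)/|\lambda_w^{\star}|^2$, replace the eigenvalues by their leading asymptotics \eqref{ev} (with the $\log n$ compensation at $\gamma=2$), and let the prefactor $(a_n^{\star})^2$ cancel the resulting powers of $n$. Up to that point you are reproducing the paper's computation \eqref{var_comp}.

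The gap is in the final limit passage. Your dominated-convergence majorant is derived from the decay $|\widehat{f}(w)|=O(\|w\|^{-M})$ of the \emph{continuum} Fourier coefficients, but the summand contains $|\widehat{f_n}(w)|^2$, and the index set $\Z^d_n$ grows with $n$. A bound of the form $|\widehat{f_n}(w)|\le C_M(1+\|w\|)^{-M}$ uniformly in $n$ and in $w\in\Z^d_n\setminus\{0\}$ does not follow from the decay of $\widehat{f}$ alone; it is exactly at the high modes $\|w\|\sim n$, where aliasing occurs and the sine factors in $\lambda_w^{\star}$ deviate most from $\|w/n\|^{2}$ (resp.\ $\|w/n\|^{\gamma}$), that the argument must be quantified. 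You flag this as ``the main obstacle'' but then assert it is handled by the smoothness of $f$, which is circular as stated. This is precisely the point where the paper departs from naive dominated convergence: for $d<d_c^{\star}$ the weight $\sum_{w}\|w\|^{-4}$ (resp.\ $\|w\|^{-2\gamma}$) is itself summable and the argument is straightforward, whereas for $d\ge d_c^{\star}$ the paper inserts a mollifier $\widehat{\rho_\kappa}$ with rapid decay and proves the two separate limits \eqref{eq-mollified-goes-to-norm} and \eqref{eq-error-in-mollification}, i.e.\ that the mollified sum converges to $\mathbb{E}(\langle\Xi^{\star},f\rangle^2)$ and that the correction $(1-\widehat{\rho_\kappa})$ contributes nothing as $\kappa\to 0^+$ after $n\to\infty$. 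Your route can be repaired (e.g.\ by discrete summation by parts against $\Delta_g$ to get a genuine $n$-uniform decay of $\widehat{f_n}$), but as written the majorant does not exist yet, and this is the one step of the proposition that carries the real content.
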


\begin{proof}
To prove  Proposition \ref{prop:var} we write:
\begin{equation}\label{var_comp}
\begin{split}
n^{-2d} (a^{\star}_n)^2 \sum_{z,\,z'\in \T_n^d} f(z)f(z') \mathbb{E}[ \eta^{\star}(nz)\eta^{\star}(nz')]  & = n^{-2d} (a^{\star}_n)^2 \sum_{z,\,z'\in \T_n^d} f(z)f(z') \sum_{w\in \Z^d_n} \frac{e^{2\pi i(z-z')\cdot w}}{|\lambda_w^{\star}|^2}.
\end{split}
\end{equation}
Observe that
\[
n^{-2d}\sum_{z,z'\in \T^d_n} f(z) f(z') \exp \left ( 2\pi i (z-z')w\right ) = | \widehat{f}(w) |^2. 
\]
Under the assumption (A-ind) we can use the asymptotics of the eigenvalues \eqref{ev} which will be of order $a^{\star}_n$ to make the convergence work. 

We need to adjust our proofs to the case where
\[
\sum_{w\in \Z^d\setminus \{o\}} \frac{1}{\|w\|^{4}} \, \, \, \text{ resp. } \sum_{w\in \Z^d\setminus \{o\}} \frac{1}{\|w\|^{2\gamma}}
\]
are converging or not. For $d<d_c^{\star}$ which was $d_c^{n.n.}=4$ when $\star=n.n.$ and $d_c^{l.r.}=2\gamma$ in the long-range case the sum converges and the argument is straightforward. In the other case we need a mollifying procedure. 
Let $\rho \in \mathcal{S}(\R^d)$ be a normalized function in the Schwartz space with support in $\T^d$. Let $\rho_k(x) =\frac{1}{k^p}\rho(x/k)$ then for all $m\in \N$ there exists $C>0$ such that 
\[
|\widehat{\rho}_k(w)| \leq \frac{c}{(1+\|w\|)^m}.
\]
It remains to prove that the mollified version converges to the desired variance
\begin{equation}\label{eq-mollified-goes-to-norm}
    \lim_{\kappa \to 0^+} \lim_{ n \to \infty} 
    n^{-2d}
    \sum_{z,z^\prime \in \mathbb{T}^d_n}f(z)f(z^\prime) 
    \sum_{w \in \mathbb{Z}^d_n \backslash \{0\}} \widehat{\rho_\kappa}(w)
    \frac{\exp(2\pi i (z-z^\prime)\cdot w )}{\|w\|^{\star}}= 
    \mathbb{E}(\langle \Xi^{\star},f \rangle^2),
\end{equation}
and the mollifier correction disappears,  
\begin{equation}\label{eq-error-in-mollification}
    \lim_{\kappa \to 0^+} \overline{\lim_{ n \to \infty}}
    \Big|
    n^{-2d}
\!\!\!
    \sum_{z,z^\prime \in \mathbb{T}^d_n}
\!\!\!\!    
    f(z)f(z^\prime) 
\!\!\!\!\!
    \sum_{w \in \mathbb{Z}^d_n \backslash \{0\}} 
\!\!\!\!\!
    \Big(1-\widehat{\rho_\kappa}(w)\Big)
    \frac{\exp(2\pi i (z-z^\prime)\cdot w )}{\|w\|^{\star}}
    \Big|
    =0.
\end{equation}

\end{proof}

Let us remark that in the correlated case (A-cor) we need only to worry about the second moment since the initial configuration is Gaussian. The variance will depend additionally on the Fourier multiplier $\widehat{K}$. 

The most involved part in the proofs is to control that the error introduced in \eqref{var_comp}, by replacing the eigenvalues $\lambda_w^{\star}$ by the leading order \eqref{ev}, is vanishing. In the long-range case this poses an extra difficulty since it requires a fine analysis to get to the right leading order and additionally to make the convergence work. 

Higher moments follow from the following combinatorial argument. Take a fixed $f \in \mathcal{F}$ and define a map $T_n: \mathbb{T}^d \longrightarrow \mathbb{R}$ by 
\begin{equation}\label{map_T}
z \longmapsto \int_{B(z, \frac{1}{2n})}    f(y) dy. 
\end{equation}
For $m\geq 3$ denote by $\mathcal{P}(m)$ the set of all partitions of $\{1,2,...,m\}$ and $\Pi$ the elements of a partition $P\in \mathcal{P}(m)$. Then we write
\[
\mathbb{E}(\langle \Xi_n^{\star},f\rangle^m) =  \sum_{P \in \mathcal{P}(m) } \prod_{\Pi \in P}
    (a_n^{\star})^{|\Pi|}
    \mathbb{E}[\sigma^{|\Pi|}] 
    \sum_{x \in \mathbb{Z}^d_n}\Big( 
        \sum_{z \in \mathbb{T}^d_n} 
        g^{(\alpha)}(x,nz)T_n(z)
    \Big)^{|\Pi|}    
\]
and use again properties of the eigenvalues, independence of the $\sigma$'s to prove that it converges to the right moment of the corresponding Gaussian model, see Proposition \ref{prop-convergence-moments-bounded-case}. We stress that independence of the $\sigma$ variables is a key property here. In fact, if we want to consider correlated random variables, then the higher moment proof is much more involved.We expect that if the correlations are small then the proof should still work.\\

\noindent
\textbf{Truncation method:} The proof will be completed once we show the truncation method.
Fix an arbitrarily large (but finite) constant $R>0$ and set
\begin{align}
w_n^{<R}(x)&:=\frac{1}{2d}\sum_{y\in \Z_n^d}g^{\star}(x,\,y)\sigma(y)\1_{\{|\sigma(y)|< R\}},\\
w_n^{\geq R}(x)&:=\frac{1}{2d}\sum_{y\in \Z_n^d}g^{\star}(x,\,y)\sigma(y)\1_{\{|\sigma(y)|\geq R\}}.
\end{align}
Clearly $w_n(\cdot)=w_n^{<R}(\cdot)+w_n^{\geq R}(\cdot)$. To prove our result, we will use
\begin{theorem}[Theorem~4.2 \citet{Bi68}] \label{thm:billy}
Let $S$ be a metric space with metric $\rho$. Suppose that $(X_{n,\,u},\,X_n)$ are elements of $S\times S.$ If
\[
\lim_{u\to+\infty} \limsup_{n\to+\infty} \mathbb{P}\left(\rho(X_{n,\,u},\,X_n)\geq \tau\right)=0
\]
for all $\tau>0$, and $X_{n,\,u}\Rightarrow_{n}Z_u\Rightarrow_u X$, where $``\Rightarrow_x''$ indicates convergence in law as $x\to +\infty$, then $X_n\Rightarrow_n X$.
\end{theorem}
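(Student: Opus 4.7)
The plan is to verify weak convergence $X_n \Rightarrow X$ via the Portmanteau characterisation: it suffices to show that $\mathbb{E}[f(X_n)] \to \mathbb{E}[f(X)]$ for every bounded uniformly continuous $f\colon S \to \mathbb{R}$. The argument will be a triangular comparison passing through the auxiliary variables $X_{n,u}$ and $Z_u$, combined with a careful choice of the order in which $n$ and $u$ are sent to infinity.

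First I fix a bounded uniformly continuous $f$ with $\|f\|_\infty \leq M$ and an arbitrary $\varepsilon > 0$, and I select $\tau > 0$ such that $\rho(x,y) < \tau$ implies $|f(x) - f(y)| < \varepsilon$. Splitting the expectation according to whether $\rho(X_n, X_{n,u}) < \tau$ or $\geq \tau$ yields
\[
|\mathbb{E}[f(X_n) - f(X_{n,u})]| \leq \varepsilon + 2M\, \mathbb{P}\bigl(\rho(X_n,X_{n,u}) \geq \tau\bigr).
\]
Inserting this into the triangle inequality
\[
|\mathbb{E}[f(X_n)] - \mathbb{E}[f(X)]| \leq |\mathbb{E}[f(X_n) - f(X_{n,u})]| + |\mathbb{E}[f(X_{n,u})] - \mathbb{E}[f(Z_u)]| + |\mathbb{E}[f(Z_u)] - \mathbb{E}[f(X)]|
\]
produces a three-term estimate valid for every pair $(n,u)$.

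Next I take $\limsup_{n\to\infty}$ with $u$ held fixed. The middle term vanishes because $X_{n,u} \Rightarrow Z_u$ as $n\to\infty$, leaving
\[
\limsup_{n\to\infty}|\mathbb{E}[f(X_n)] - \mathbb{E}[f(X)]| \leq \varepsilon + 2M\, \limsup_{n\to\infty}\mathbb{P}\bigl(\rho(X_n, X_{n,u})\geq \tau\bigr) + |\mathbb{E}[f(Z_u)] - \mathbb{E}[f(X)]|.
\]
Sending $u\to\infty$, the third hypothesis of the theorem annihilates the probability term while $Z_u \Rightarrow X$ kills the last term. Since $\varepsilon$ was arbitrary, this closes the argument.

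The only genuinely delicate point is the ordering of the two limits: one must first send $n\to\infty$ for a fixed approximation level $u$ (exploiting $X_{n,u}\Rightarrow_n Z_u$), and only afterwards send $u\to\infty$ (exploiting both $Z_u \Rightarrow_u X$ and the approximation hypothesis). Everything else reduces to the Portmanteau theorem and the uniform continuity of $f$; variants using bounded Lipschitz test functions or the closed-set formulation of weak convergence would work equally well and would not change the structure of the proof.
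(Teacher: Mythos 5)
Your argument is correct and complete. The paper itself does not prove this statement at all --- it is quoted verbatim as Theorem~4.2 of Billingsley's \emph{Convergence of Probability Measures} and used as a black box for the truncation step --- so there is no internal proof to compare against. Your route is the standard test-function proof: reduce weak convergence to convergence of $\mathbb{E}[f(X_n)]$ for bounded uniformly continuous $f$, split $|\mathbb{E}[f(X_n)-f(X_{n,u})]|$ on the event $\{\rho(X_n,X_{n,u})\geq\tau\}$ with $\tau$ chosen from the modulus of continuity of $f$, and then take $\limsup_n$ before $u\to\infty$; since the left-hand side is independent of $u$, the order of limits is exactly as you describe and the bound $\varepsilon$ survives. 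For the record, Billingsley's own proof runs instead through the closed-set form of the Portmanteau theorem, bounding $\mathbb{P}(X_n\in F)\leq\mathbb{P}(X_{n,u}\in F^\tau)+\mathbb{P}(\rho(X_{n,u},X_n)\geq\tau)$ with $F^\tau$ the closed $\tau$-enlargement of $F$ and then letting $\tau\downarrow 0$; the two arguments are interchangeable, and yours has the minor advantage of avoiding the (routine) verification that $F^\tau$ decreases to $F$ for closed $F$. The only hypothesis you use implicitly is that $\rho(X_{n,u},X_n)$ is measurable, which is guaranteed by the statement's assumption that $(X_{n,u},X_n)$ is a random element of $S\times S$.
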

Following this Theorem, we need to show two steps:
\begin{itemize}
\item[(Step 1)] $\lim_{R\to+\infty} \limsup_{n\to+\infty} \mathbb{P}\left(\left\| a^{\star}_n\Xi^{\star}_{n}- a^{\star}_n\Xi^{\star}_{w_n^{<R}}\right\|_{ H_{-\epsilon}}\geq \tau\right)=0$ for all $\tau>0$.
\item[(Step 2)] For a constant $v_R>0$, we have $a^{\star}_n \Xi^{\star}_{w_n^{<R}}\Rightarrow_n \sqrt{v_R}\, \Xi^{\star} \Rightarrow_{R}\Xi$ in the topology of $H_{-\epsilon}$.
\end{itemize}
As a consequence we will obtain that $a^{\star}_n\Xi^{\star}_{n}$ converges to $\Xi^{\star}$ in law in the topology of $H_{-\epsilon}.$
To prove (Step 1) notice that
\[
\left\| a^{\star}_n\Xi_{n}- a^{\star}_n\Xi_{w_n^{<R}}\right\|_{H_{-\epsilon}}=\left\| a^{\star}_n \Xi_{w_n^{\geq R}}\right\|_{H_{-\epsilon}}
\]
by definition, for every realization of $(\sigma(x))_{x\in \Z_n^d}$. Since, for every $\tau>0$,
\[
\mathbb{P}\left(\left\| a^{\star}_n \Xi_{w_n^{\geq R}}\right\|_{ H_{-\epsilon}}\geq\tau\right)\le\frac{\mathbb{E}\left[\left\| a^{\star}_n \Xi_{w_n^{\geq R}}\right\|_{ H_{-\epsilon}}^2\right]}{\tau^2}
\]
it will suffice to show that the numerator on the right-hand side goes to zero to show (Step 1).  The second step follows from the next considerations.
Set $v_R = Var(\sigma\1_{\{|\sigma|<R\}})$ and
\[
\sigma^R(x):=\sigma(x)\1_{\{|\sigma(x)|<R\}}-m_R.
\]
Let us look at the following fields, $c>0$ some constant, 
\[
\Xi^{\star}_{n,\,R}(x):= c \cdot a^{\star}_n \sum_{z\in \T^d_n}\sum_{w\in\Z_n^d}g^{\star}(w,\,nz)\sigma^R(w)\1_{B(z,\,\frac{1}{2n})}(x),\quad x\in \T^d.
\]
Since $\sum_{y\in \Z_n^d}g^{\star}(\cdot,\,y)$ is a constant function on $\Z_n^d$ it follows that
\[
\la \Xi^{\star}_{n,\,R},\,f  \ra=\la \Xi^{\star}_{w_n^{<R}},\,f\ra
\]
for all smooth functions $f$ with zero average. Therefore the field $\Xi^{\star}_{n,\,R}$ has the same law of $\Xi^{\star}_{w_n^{<R}}$. If we multiply and divide the former expression by $\sqrt{v_R}$ and take the limits we get the claim. 


\section{Alpha-Stable Distributions}\label{sec:stable}

Which assumptions on the initial distribution or redistribution of the mass are responsible for the Gaussian nature of the limiting odometer interface?
We have seen that the redistribution (nearest neighbours versus long-range)  of the mass will influence the regularity of the limiting field but not the Gaussianity. In fact, the finite second moment assumption of the $\sigma$'s is responsible for the CLT-type behaviour. 

In \citet{CHRheavy} the authors proved that for $\sigma$'s which are in the domain of attraction of stable distributions the odometer scales to an $\alpha$-stable continuum field on $\T^d$. To the authors knowledge it is the first time that a continuum $\alpha$-stable field was constructed. 

\subsection{Stabilization on infinite graphs}

In \citet{CHRheavy} the authors investigate the dichotomy between stabilization and explosion for infinite vertex transitive graphs $G$ when $(s(x))_{x\in V}$ is i.i.d. and has not necessarily a finite mean or variance.

In Lemma 1 it is shown that for $\mathbb{E}(s(o))=\infty$ the sandpile will not stabilize almost surely while for $\mathbb{E}(s(o))=-\infty$ it does.
Assuming finite mean of the initial configuration and infinite variance one has to be more careful. As in \citet{LMPU} one has to distinguish two cases, when $\sum_{y\in V}g(0,y)^{\alpha}=\infty$ and when it is finite. The first case is called the \textit{doubly transient case} and corresponds to e.g. $\Z^3,\Z^4$ and the latter the \textit{singly transient case} corresponding for example to $\Z^d$ for $d\geq 5$. 

In both cases it turns out that for $\mathbb{E}(s(o))=1$ and $\sigma$'s regularly varying, the probability of stabilization is 0 (compare Theorems 2 and 3 from \citet{CHRheavy}). The same result holds true in the finite variance case, proven already by \citet{LMPU} in Theorem 1.1.

\subsection{Scaling limit of the odometer for heavy-tail inital distributions}

Let us first state assumptions on the initial configuration $s$. We will again assume that $s$ satisfies
\[
s(x) = 1 + \sigma(x) - \frac{1}{n^d}\sum_{z\in \Z^d_n}\sigma(z).
\]

 A non-negative random variable $X$ is called {\em regularly varying} of index $\alpha\ge 0$, and we write $X\in RV_{-\alpha}$, if
\[
 \mathbb{P}(X>x) \sim x^{-\alpha} L(x) \quad \text{as }x \rightarrow +\infty 
\]
where $L$ is a slowly varying function, i. e.,
\[
\lim_{x\rightarrow +\infty}\frac{L(tx)}{L(x)}=1\quad\text{for all }t>0.
\]
We recall the definition of random variables which fall in the domain of attraction of $\alpha$-stable distributions:
\begin{definition}[Domain of normal attraction of stable variables]
\label{firstassumption}
Let $\alpha\in (0,\,2]$. Let $V$ be a countably infinite index set and $(W(x))_{x\in V}$ be i.i.d. symmetric random variables with common distribution function in the domain of normal attraction of an $\alpha$-stable distribution. This means that, for $V_1\subset V_2\subset\ldots$ such that $\cup_{k\geq 1}V_k=V$, we have the following limit:
\begin{equation}\label{eq:doa:stable}
\lim_{k\rightarrow +\infty}{|V_k|^{-\frac{1}{\alpha}}}\sum_{x\in V_k} W(x)\stackrel{d}{=} \rho_\alpha,
\end{equation}
where $\rho_\alpha$ has a symmetric $\alpha$-stable law which we denote as $S\alpha S(\mathfrak c)$, that is, $\mathbb{E}[ \exp(i \theta \rho_\alpha)]= \exp({-\mathfrak{c}^\alpha|\theta|^\alpha})$ for some $\mathfrak{c}\in \mathbb{R}$. 
\end{definition}
The scaling limit result (Theorem 5) in \citet{CHRheavy} is proven in the setting  $V_n:=\Z_n^d$ and $V= \Z^d$.  If the scale parameter of the $\alpha$-stable law is $1$, we will write $\sigma(x)\stackrel{d}{=} S\alpha S(1)$. In this case, it is well known that $|\sigma(x)|$ has a regularly varying tail with index $-\alpha$, for $\alpha\in (0,2]$.
The results we are going to prove can be extended to a more general set-up assuming further necessary and sufficient conditions for the $(\sigma(x))_{x\in V}$ to be in the domain of attraction of stable variables, see also \citet{ST}. 
Proposition \ref{prop:dislaw} is valid also for finite connected graphs $G=(V,E)$, compare Proposition 4 in \citet{CHRheavy}.\\

\noindent
\textbf{Example:}
For $\sigma$'s chosen from a Cauchy distribution or Pareto with parameter 1.5 (Figure \ref{Cauchy} resp. \ref{Pareto}) we see that the odometer interface has a completely different shape. Indeed, the limiting interface will not be Gaussian any more.

\begin{figure}[h]
\centering
 \centering
    \begin{minipage}{.5\textwidth}
\includegraphics[scale=0.5]{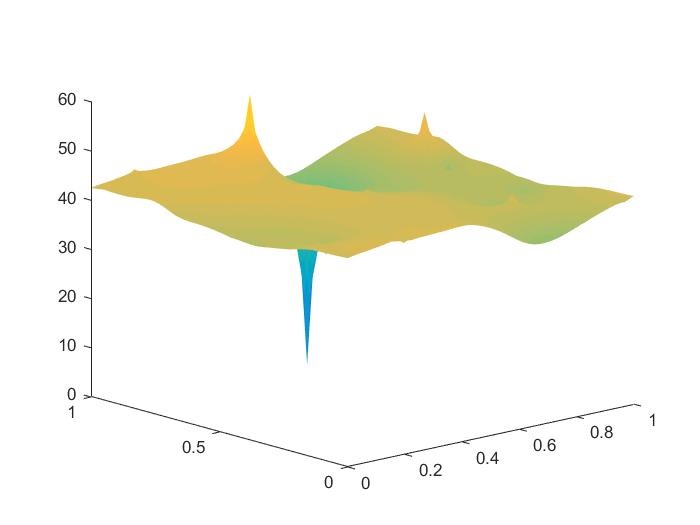}
\caption{Realization of the odometer interfaces for Cauchy initial $\sigma$'s}
\label{Cauchy}
\end{minipage}%
    \begin{minipage}{.5\textwidth}
\centering
\includegraphics[scale=0.5]{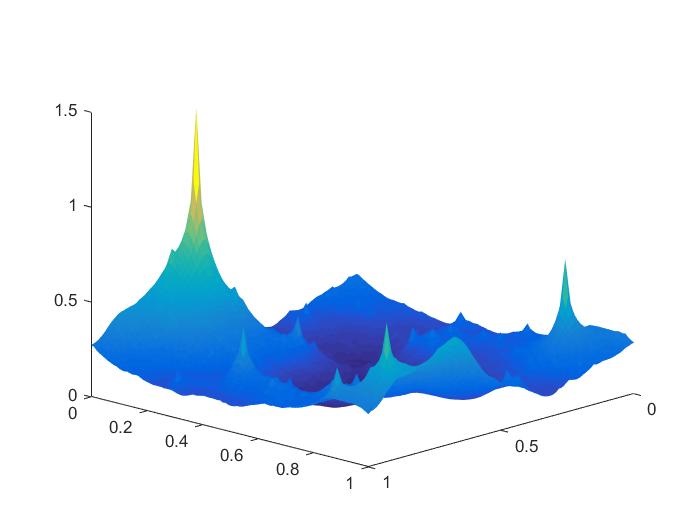}
\caption{Realization of the odometer interfaces for Pareto $\sigma$'s with parameter 1.5}
\label{Pareto}
\end{minipage}
\end{figure}

In the following we define the continuum $\alpha$-stable field (compare Definition 2.1. in \citet{KumMan}).
\begin{definition}
A random distribution (field) $\Xi_{\alpha}$ on the dual space of smooth functions on $\T^d$, $(C^{\infty}(\T^d)/\sim)^*$, is called $\alpha$-stable if, given two independent copies $\Xi_{\alpha,\,1}$ and $\Xi_{\alpha,\,2}$ of $\Xi_\alpha$, then for any $a,\, b>0$ and $f \in (C^{\infty}(\T^d)/\sim)^*$
\[
\mathbb{E}[\exp(i \la \Xi_{\alpha,\,1}, af\ra )]\mathbb{E}[\exp(i \la \Xi_{\alpha,\,2}, bf\ra )]= \mathbb{E}\left[\exp\left(i \la \Xi_{\alpha}, (a^\alpha+b^\alpha )^{\frac{1}{\alpha}} f \ra \right)
\right].
\]
\end{definition}
Alternatively we can represent the above characteristic functional of the field as
\[
\mathbb{E}(\exp(i \langle \Xi_{\alpha},f\rangle)) = \exp \left ( -\|\Delta^{-1} f\|^{\alpha}_{L^{\alpha}(\T^d)} \right ).
\]

The following Theorem 5 from \citet{CHRheavy} states the finite-dimensional convergence of the odometer field. 

\begin{theorem}\label{thm:scaling_RV}
Let $d\ge 1$,  $(\sigma(x))_{x\in \Z_n^d}$ be i.i.d. and satisfy Definition \ref{firstassumption}, and furthermore let $(s(x))_{x\in \Z_n^d}$ satisfy \eqref{ini_s}. Consider $a_n=4\pi^2 n^{d-d/\alpha -2}$.
There exists a random distribution $\Xi_\alpha$ on $(C^\infty(\T^d))^\ast$ such that: for all $m\in \N$ and $f_1, f_2,\ldots,\, f_m\in C^\infty(\T^d)$ with mean zero, the random variables $\la a_n\Xi_n, f_j \ra$ converge jointly in distribution to a random variable $\la a_n \Xi_\alpha, f_j\ra$. Moreover, the characteristic functional of $\Xi_\alpha$ is given by 
\begin{equation}\label{eq:limitfield}
\mathbb{E}[\exp(i \la \Xi_{\alpha}, f\ra )]=\exp\left(-\int_{\T^d} \left| \sum_{z\in \Z^d\setminus \{0\}} \frac{\exp(-2\pi i z\cdot x)}{\|z\|^2} \widehat{f}(z)\right|^{\alpha} dx \right).
\end{equation}
\end{theorem}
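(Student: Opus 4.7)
The approach is to reduce the convergence of the random distribution to a characteristic-function computation for a sum of independent heavy-tailed variables, and then to identify the limit via a Riemann-sum argument on the Fourier side. By Proposition~\ref{prop:dislaw} the odometer satisfies $u^{n.n.}_\infty(x) \stackrel{d}{=} \eta(x) - \min_z \eta(z)$ with $\eta(x) = \sum_y g^{n.n.}(x,y)(s(y)-1)$. Since every $f_j$ has zero mean, both the minimum shift and the constant $n^{-d}\sum_y \sigma(y)$ drop out when paired with $f_j$, yielding
\[
\la a_n \Xi_n, f \ra \;=\; \sum_{w \in \Z^d_n} c_{n,w}(f)\, \sigma(w), \qquad c_{n,w}(f) := a_n \sum_{z \in \T^d_n} T_n(z)\, g^{n.n.}(nz, w),
\]
with $T_n(z) := \int_{B(z,1/(2n))} f(x)\,dx$ as in \eqref{map_T}. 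Joint convergence for $(f_1,\ldots,f_m)$ follows by the Cram\'er--Wold device from one-dimensional convergence for every linear combination $f = \sum_j \theta_j f_j \in \mathcal{F}$, so it suffices to compute $\lim_{n\to\infty}\mathbb{E}[\exp(i\la a_n \Xi_n, f\ra)]$ for a single such $f$.

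Next I would identify the coefficients asymptotically. Expanding $g^{n.n.}$ in the Fourier basis \eqref{def-fourier-basis-discrete} and invoking the eigenvalue expansion $\lambda_v\sim -4\pi^2\|v/n\|^2$ used in the proof of Theorem~\ref{theo_nn}, together with the specific choice $a_n = 4\pi^2 n^{d-d/\alpha-2}$, I obtain
\[
c_{n,w}(f) \;=\; n^{-d/\alpha}\,\mathcal{G}f\!\left(\tfrac{w}{n}\right) + r_{n,w}, \qquad \mathcal{G}f(x) := \sum_{v \in \Z^d\setminus\{0\}} \frac{\widehat{f}(v)\,e^{-2\pi i v \cdot x}}{\|v\|^2},
\]
where $r_{n,w}$ collects errors from replacing $T_n(z)$ by $n^{-d}f(z)$, from the Taylor expansion of $\lambda_v$, and from restricting the $v$-sum to $\Z^d_n$ rather than $\Z^d$. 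Smoothness of $f$ forces rapid decay of $\widehat{f}$, so $\mathcal{G}f \in C^\infty(\T^d)$ and a careful estimate produces $\sum_w |r_{n,w}|^\alpha = o(1)$.

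Then I would compute the characteristic functional using independence of the $\sigma(w)$:
\[
\mathbb{E}\bigl[\exp(i \la a_n\Xi_n, f\ra)\bigr] \;=\; \prod_{w \in \Z^d_n} \mathbb{E}\bigl[\exp(i c_{n,w}(f)\,\sigma(w))\bigr].
\]
In the exactly stable case $\sigma \stackrel{d}{=} S\alpha S(1)$, this product equals $\exp\!\bigl(-\sum_w |c_{n,w}(f)|^\alpha\bigr)$, and substituting the previous asymptotic form reduces the question to the Riemann-sum limit
\[
\sum_{w \in \Z^d_n} n^{-d}\left|\mathcal{G}f\!\left(\tfrac{w}{n}\right)\right|^\alpha \;\longrightarrow\; \int_{\T^d} |\mathcal{G}f(x)|^\alpha\, dx,
\]
which is immediate from continuity of $\mathcal{G}f$. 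This gives exactly the characteristic functional \eqref{eq:limitfield}. For $\sigma$ only in the normal domain of attraction, I would invoke the expansion $\log\mathbb{E}[e^{it\sigma}] = -|t|^\alpha(1+o(1))$ as $t\to 0$, valid by Definition~\ref{firstassumption}, and note that $\max_w |c_{n,w}(f)| \to 0$, so the expansion holds uniformly in $w$ and can be integrated across the sum.

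The hardest step is precisely this last uniformisation: upgrading the pointwise characteristic-function expansion into a bound that survives summation over all $n^d$ coordinates $w$. Three delicate points must be handled simultaneously: (i) a uniform quantitative error for $\log\mathbb{E}[e^{it\sigma}] + |t|^\alpha$, in which a slowly varying function enters; (ii) splitting the $w$-sum into a bulk region where $|\mathcal{G}f(w/n)|$ is bounded below and a thin region where $\mathcal{G}f$ is small, the latter controlled by crude $L^\alpha$ tail bounds; and (iii) Karamata-type manipulations to show that slowly varying corrections do not perturb the limit. Once one-dimensional convergence is secured, tightness in a negative-order Sobolev space along the lines of Section~\ref{sec:scale} (via Rellich's Theorem~\ref{thm-rellich}) would upgrade finite-dimensional convergence to convergence in law of the random distribution $a_n \Xi_n$ to $\Xi_\alpha$.
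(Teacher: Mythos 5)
Your core computation is essentially the paper's proof, compressed: reduce $\la a_n\Xi_n,f\ra$ to $\sum_w c_{n,w}(f)\sigma(w)$ using zero-mean test functions, expand the Green's function in the Fourier basis, replace $\lambda_v$ by $-4\pi^2\|v/n\|^2$, and pass to the Riemann sum $\sum_w n^{-d}|\mathcal{G}f(w/n)|^\alpha\to\int_{\T^d}|\mathcal{G}f|^\alpha$. The paper organises exactly this into Steps 1--5 (with an explicit mollifier $\widehat\phi_\epsilon$ doing the work you assign to ``a careful estimate'' of $\sum_w|r_{n,w}|^\alpha$; that mollification, plus the extension of the $v$-sum from $\Z^d_n$ to $\Z^d$, is where most of the technical effort sits, and for $\alpha\in(1,2]$ you additionally need $\big||a|^\alpha-|b|^\alpha\big|\lesssim|a-b|\,(|a|^{\alpha-1}+|b|^{\alpha-1})$ rather than subadditivity). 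Where you genuinely diverge is the passage from exactly stable $\sigma$ to the domain of normal attraction: the paper transfers the result via a comparison in probability with the exactly stable field (Proposition 12 of the cited work), whereas you expand $\log\mathbb{E}[e^{it\sigma}]=-\mathfrak{c}^\alpha|t|^\alpha(1+o(1))$ directly. Your route is legitimate and arguably more self-contained, but you overstate its difficulty: since $\max_w|c_{n,w}(f)|=O(n^{-d/\alpha})\to0$ and $\sum_w|c_{n,w}(f)|^\alpha$ stays bounded, the single error function $\varepsilon(t)=o(1)$ is evaluated at arguments tending to $0$ uniformly, so no bulk/thin splitting or Karamata manipulation is needed (for the domain of \emph{normal} attraction the slowly varying function is asymptotically constant anyway).

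One concrete point to correct: your closing claim that tightness in $H_{-\epsilon}$ ``along the lines of Section~\ref{sec:scale} (via Rellich)'' upgrades the result to convergence in law of the distribution. The tightness argument of Section~\ref{sec:scale} runs through Markov's inequality applied to $\mathbb{E}\big[\|a_n\Xi_n\|^2_{H_{-\epsilon/2}}\big]$, which does not exist here since the $\sigma$'s have no second moment for $\alpha<2$; the paper explicitly notes that in this setting one works on $(C^\infty(\T^d)/\sim)^*$ and that the Rellich route is unavailable. This is why Theorem~\ref{thm:scaling_RV} claims only finite-dimensional convergence, which is all your argument needs to deliver; drop the tightness step rather than asserting it.
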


\begin{proof}
Let us present a sketch of the proof and the main key steps. We will again rely on Fourier analysis since we need to determine a characteristic functional. Note that we are not working any more on AWS and Sobolev spaces but rather dual spaces of smooth functions on the torus. Therefore we do not have Rellich's theorem at hand and cannot prove tightness as in the previsous cases. 

 However, since we are dealing now with random fields with possibly no moments, we cannot use any more a moment convergence argument. What will be helpful is to consider a simpler case. We will first consider $\sigma\sim S\alpha S(1)$ and then look at the general case. 

The proof is decomposed into 5 steps and they are related to each other by: 
\begin{center}
Step~\ref{step:5}$\Rightarrow$ Step~\ref{step:4}$\Rightarrow$ Step~\ref{step:3}$\Rightarrow$
Step~\ref{step:2}$\Rightarrow$
Step~\ref{step:1}.
\end{center}

Similar as before, due to mean zero test functions, we can write
\begin{equation}\label{Xi_alpha}
\langle \Xi_{n}, f \rangle = \sum_{x\in \Z^d_n} \left ( \sum_{z\in \T^d_n} g(x,nz)T_n(z)\right )\sigma(x)=:\sum_{x\in \Z^d_n} k_n(x)\sigma(x)
\end{equation}
where $T_n$ was defined in \eqref{map_T}. Then we get
\[
\mathbb{E}(\exp(i\langle a_n \Xi_n,f \rangle)) =\exp \left(-\sum_{x\in \Z^d_n} |a_n k_n(x)|^{\alpha} \right).
\]

Defining a reminder term
\[
R_n(w):= \int_{B\left(w,\,\frac{1}{2n}\right)} (f(u)-f(w)) d u
\] 
which will be small in the limit we can rewrite $k_n$:

\begin{align}
k_n(x)&=-\frac{1}{n^d}\sum_{w\in\T_n^d}\left( n^{-d} f(w)+R_n(w)\right) \sum_{z\in \Z_n^d\setminus\{0\}} \frac{ e^{2\pi i z(x-nw)} }{\lambda_z}\nonumber\\
&=-\frac{1}{n^{2d}}\sum_{w\in\T_n^d} f(w)\!\sum_{z\in \Z_n^d\setminus\{0\}} \frac{ e^{2\pi i z(x-nw)}}{\lambda_z}-\frac{1}{n^d}\sum_{w\in\T_n^d}R_n(w)\!\sum_{z\in \Z_n^d\setminus\{0\}} \frac{ e^{2\pi i z(x-nw) }}{\lambda_z}\nonumber\\
&:= l_n(x)+ C_n(x)\label{eq:def_lC}.
\end{align}
First one shows that the convergence of $\exp\left(-\sum_{x\in \Z_n^d} |a_n k_n(x)|^\alpha\right)$ can be given in terms of the same quantity where $k_n(\cdot)$ is replaced by $l_n(\cdot)$:
\begin{step}\label{step:1}
\[
\lim_{n\rightarrow +\infty}\left|\exp\left(-\sum_{x\in \Z_n^d} |a_n k_n(x)|^\alpha\right)- \exp\left(-\sum_{x\in \Z_n^d} |a_n l_n(x)|^\alpha\right)\right|=0.
\]
\end{step}
In the next steps one proves that $l_n$ yields the correct characteristic function. In Step~\ref{step:2} we are introducing a mollifier which will help to extend sums from $\Z_n^d$ to the whole lattice. 
\begin{step}\label{step:2}
{Let $\phi\in \mathcal S(\R^d)$, the Schwartz space, with
$\int_{\R^d}\phi(x) d x=1$. Let $\epsilon>0$ and let $\phi_\epsilon(x):= \epsilon^{-d}\phi\left({x}{\epsilon}^{-1}\right)$ for $\epsilon>0$.} Then 
\begin{align*}
\lim_{\epsilon\downarrow 0}\lim_{{n\rightarrow +\infty}}\left|\sum_{x\in \Z_n^d} |a_n l_n(x)|^{\alpha}- \frac{a_n^\alpha}{n^{d\alpha}} \sum_{x\in \T_n^d}\left| \sum_{z\in \Z_n^d\setminus \{0\}} \frac{\widehat \phi_\epsilon(z)\exp(-2\pi \imath z\cdot x) }{\lambda_z} \widehat{f_n}(z)\right|^{\alpha}\right|=0
\end{align*}
where $\widehat{f_n}(z)= n^{-d}\sum_{w\in \T_n^d} f(w)\exp(2\pi i w\cdot z)$. 
\end{step}
The goal of the third step is to approximate each eigenvalue $\lambda_z$ of the Laplacian with the norm of the point $z$, namely
\begin{step}\label{step:3}
Uniformly in $\epsilon>0$
\begin{align*}
\lim_{n\to+\infty}\frac{a_n^\alpha}{n^{d\alpha}} &\left|\sum_{x\in \T_n^d}\left[\left| \sum_{z\in \Z_n^d\setminus \{0\}} \frac{\widehat \phi_\epsilon(z)\e^{-2\pi \imath z\cdot x} }{\lambda_z} \widehat{f_n}(z)\right|^{\alpha}
\right.\right.\\
&\left.\left.-\frac{ n^{2\alpha}}{4^\alpha {\pi^{2\alpha}} } \left| \sum_{z\in \Z_n^d\setminus \{0\}} \frac{\widehat \phi_\epsilon(z)\e^{-2\pi \imath z\cdot x} }{\|z\|^2} \widehat{f_n}(z)\right|^{\alpha}\right]\right|=0
\end{align*}
\end{step}
In the next step we extend the sums in Step \ref{step:3} over $\Z^d$ using the decay of the mollifier.
\begin{step}\label{step:4}
Uniformly in $\epsilon>0$
\begin{align*}
\lim_{n\rightarrow +\infty}\frac{a_n^\alpha n^{2\alpha}}{n^{d\alpha}{4^\alpha\pi^{2\alpha}}}&\left| \sum_{x\in \T_n^d}\left| \sum_{z\in \Z_n^d\setminus \{0\}} \frac{\widehat \phi_\epsilon(z)\exp(-2\pi \imath z\cdot x) }{\|z\|^2} \widehat{f_n}(z)\right|^{\alpha}\right.\\
& -
\left. \sum_{x\in \T_n^d}\left| \sum_{z\in \Z^d\setminus \{0\}} \frac{\widehat \phi_\epsilon(z)\exp(-2\pi \imath z\cdot x) }{\|z\|^2} \widehat{f_n}(z)\right|^{\alpha}\right|=0.
\end{align*}
\end{step}
At last, we can finally show the convergence of the sum to the required integral.
\begin{step}\label{step:5}
\eq{}\label{eq:ice}\lim_{\epsilon\downarrow 0}\lim_{{n\to+\infty}}\frac{1}{n^d}\sum_{x\in \T_n^d}\left| \sum_{z\in \Z^d\setminus \{0\}} \frac{\widehat \phi_\epsilon(z)\e^{-2\pi \imath z\cdot x} }{\|z\|^2} \widehat{f_n}(z)\right|^{\alpha}
= \int_{\T^d} \left| \sum_{z\in \Z^d\setminus \{0\}} \frac{\e^{-2\pi \imath z\cdot x}}{\|z\|^2} \widehat{f}(z)\right|^{\alpha} \De x.\eeq{}
\end{step}

Those steps are quite involved to prove since we cannot rely on moment assumptions and need to perform fine analysis. 
It remains to prove  Theorem \ref{thm:scaling_RV} for regularly varying initial distributions. This will be implied by Proposition 12 of \citet{CHRheavy} stating that for all $f\in \mathcal{F}$, $\epsilon>0$

\[
\lim_{n\rightarrow \infty} \mathbb{P} \left ( | \langle \Xi_n,f \rangle - \langle \Xi'_n,f \rangle| > \epsilon \right )=0
\]
where
\[
\langle \Xi'_{n}, f \rangle = \sum_{x\in \Z^d_n} \left ( \sum_{z\in \T^d_n} g(x,nz)T_n(z)\right )\sigma(x)  
\]
and $\sigma \sim \rho_{\alpha}$ from Definition \ref{firstassumption}.

\end{proof}

\section{Internal Diffusion Limited Aggregation}\label{sec:iDLA}

In this section we want to discuss the connection between odometer functions of divisible sandpile models and internal diffusion limited aggregation models (iDLA). 
 
It was introduced for the first time by \citet{DiaFul}. The iDLA model describes the growth of a random aggregate of particles from the inside out. Start initially with $n$ particles at the origin $o\in \Z^d$ and let each particle perform a random walk until it reaches an unoccupied site. Consider the random set of occupied sites $D(n)$. Bramson et al. in \citet{Brams} were the first ones to identify that $D(n^d)$ is very close to the Euclidean ball of radius $n$, which was later sharpened by \citet{Gaudi1,Gaudi2}. 

Levine and Peres in \citet{LevPerDLA} identified that the scaling limit of three growth models, namely the iDLA, rotor-router and divisible sandpile models on $\Z^d$ to be the same and are related to solutions to certain PDE free boundary problems in $\R^d$. 
To identify the limiting shape, the authors in \citet{LevPerDLA} consider the odometer function of the divisible sandpile. Recall that it satisfies the following relation (see Proposition \ref{prop:stab}):

\[
\Delta u(x) = s_{\infty}(x) - s(x)
\]

where $s_{\infty}$ is the final divisible sandpile configuration and $s$ the initial one. Due to the constraint, $\sum_{x\in \Z_n^d} s(z) =n^d$, we obtain that $s_{\infty}\equiv 1$. In general for $\sum_{x\in \Z^d_n} s(x)< n^d$ the limiting configuration $s_{\infty}$ is a random configuration depending on $s$.

Remember that an alternative way to find the odometer function is to solve the obstacle problem, see Lemma \ref{odo_lev}, where one has to find a proper obstacle $\gamma$ satisfying the Laplacian equation $(-\Delta) \gamma = s-1$ and a solution to the obstacle problem $v$ satisfying $v(x) = \inf \{ f(x)|f\geq \gamma; (-\Delta)f \leq 0 \}$. Then the odometer is equal to
\begin{equation}\label{obs_u}
u=v-\gamma.
\end{equation}
In fact, this was the approach chosen in \citet{frometa:jara} to determine the scaling limit of the odometer of a truncated long-range divisible sandpile in $\Z^2$.

Let us point out again a crucial fact that the scaling limit for the divisible sandpile, obtained in the previous sections, corresponds to the scaling limit of the \textit{obstacle} in this context and not of \eqref{obs_u}. This is due to the fact that the mean 0 test functions cancel out the contribution of $v$ which is the solution of the obstacle problem.

For finding the limiting shape of the odometer, one has first to find an appropriate obstacle $\gamma$. A good candidate will turn out to be
\begin{equation}\label{gamma}
\gamma(x) = -|x|^2 - \sum_{y\in \Z^d} G(x,y)s(y)
\end{equation}
where $G(x,y)$ is the Green's function in $\Z^d$ for $d\geq 3$ and the recurrent potential kernel in $\Z^2$. Chosen $\gamma$ in this way
we get that $\Delta (u+\gamma) \leq 0$ hence $u+\gamma$ is a superharmonic function on $\Z^d$. For any superharmonic function $f\geq \gamma$ 
\[
\Delta (f-\gamma - u) \leq 0
\]
on the domain $\overline{D}=\{ x\in \Z^d| s_{\infty}(x)=1 \}$ and non-negative outside $D$ hence non-negative everywhere. Hence we get that $u$ is equal to \eqref{obs_u}.

The description in the continuum $\R^d$ is then immediate. Given an initial mass $s\in \R^d$ and obstacle

\begin{equation}\label{gamma_r}
\gamma(x) = -|x|^2 - \int_{\R^d} G(x,y)s(y)dy
\end{equation}
where
\[
G(x,y) = \begin{cases} -\log(\|x-y \|) & \text{ if } d =2\\
\|x-y\|^{2-d} & \text{ if } d \geq 3
\end{cases}
\]
the odometer $u=v-\gamma$ where $v(x)=\{ f(x)| f \text{ is continuous, superharmonic }, f\geq \gamma\}$. 

\begin{definition}
The non-coincidence set for the obstacle problem with obstacle $\gamma$ is the domain of occupied sites given by
\[
D=\{ x\in \R^d| v(x) > \gamma(x)\}.
\]
\end{definition}
 
For a lattice spacing $\delta_n \rightarrow 0$ as $n \rightarrow \infty$ and domains $A_n \subset \delta_n \Z^d, D\subset \R^d$ write $A_n\rightarrow D$ if for any $\epsilon  >0$, $D_{\epsilon} \cap \delta_n \Z^d \subset A_n \subset D^{\epsilon}$ for $n$ large enough and $D^{\epsilon}=\{x\in \R^d|B(x,\epsilon) \not\subset D^c\}$ resp. $D_{\epsilon}= \{ x\in D| B(x,\epsilon) \subset D\}$. Let us state Theorem 1.2 from \citet{LevPerDLA} which identifies the non-coincidence set for the obstacle problem with the occupied sites of the iDLA, rotor-router and divisible sandpile model in the scaling limit.

\begin{theorem}\label{theo_shape}
 Let $B\subset \R^d$, $d\geq 2$ be a bounded open set. Put $s:\R^d \rightarrow \N\cup \{0\}$ bounded, continuous almost everywhere satisfying $\{\sigma \geq 1\} = \overline{B}$. Consider the initial configuration $s_{(n)} : \delta_n \Z^d \rightarrow \N\cup \{0\}$ with density 
\[
s_{(n)}(x) = \frac{1}{\delta_n^d} \int_{[x-\frac{\delta_n}{2}, x+\frac{\delta_n}{2}]^d} s(y) dy.
\]
Denote by $D^{\star}_n$ the domain of occupied sites for $\star\in \{ \text{div. sand.}, \text{iDLA}, \text{ ro.-rou.}\}$ respectively.
Then
\[
D^{\star}_n\rightarrow D \cup B
\]
for all $\star\in \{ \text{div. sand.}, \text{iDLA}, \text{ ro.-rou.}\}$ as $n\rightarrow \infty$ and $\delta_n \log(n)\rightarrow 0$ for $\star=div.sand.$ The convergence holds in the sense described above.
\end{theorem}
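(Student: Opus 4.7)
The plan is to exploit the obstacle-problem reformulation from Lemma \ref{odo_lev} and pass to the scaling limit at the level of obstacles. For each of the three models the domain of occupied sites admits a description in terms of the non-coincidence set of an appropriate discrete obstacle problem; the continuum non-coincidence set $D$ then appears as the limit via stability results for obstacle problems.

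First I would treat the divisible sandpile, since it is deterministic. Let $\gamma_n$ denote the discrete analogue of \eqref{gamma}, built from the lattice Green's function $G_n$ on $\delta_n \Z^d$ (or the recurrent potential kernel when $d=2$) and the initial configuration $s_{(n)}$. Classical lattice potential estimates give $\gamma_n \to \gamma$ uniformly on compact sets, with $\gamma$ as in \eqref{gamma_r}. By Proposition \ref{prop-least-action} and Lemma \ref{odo_lev}, the discrete odometer $u^{(n)}$ equals $v_n - \gamma_n$, where $v_n$ is the least discrete-superharmonic majorant of $\gamma_n$. Stability of the obstacle problem under uniform convergence of obstacles, combined with the coercivity of $\gamma$, yields $v_n \to v$ uniformly on compacts, and hence $\{u^{(n)} > 0\} = \{v_n > \gamma_n\}$ converges to $D$ in the stated sense. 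Adding the initial support $\{s_{(n)} \geq 1\}$, which converges to $\overline{B}$, gives $D^{\text{div.sand.}}_n \to D \cup B$.

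The iDLA and rotor-router cases follow by comparison with the divisible sandpile, via the abelian property. For iDLA one compares the random odometer $u^{\text{iDLA}}_n$ with the deterministic $u^{(n)}$ through a Jerison--Levine--Sheffield-type concentration estimate, bounding the sup-norm discrepancy by $O(\log n)$; the hypothesis $\delta_n \log n \to 0$ is precisely what is needed for this error to vanish after rescaling. For the rotor-router, one invokes Holroyd--Propp-style pointwise discrepancy bounds against the divisible sandpile odometer. In both cases, once the odometers agree up to negligible error, so do the non-coincidence sets, and the limit $D \cup B$ identified in the previous paragraph is inherited.

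The main obstacle will be controlling the free boundary $\partial D$ under these perturbations. A priori, small discrepancies in the obstacle or the odometer could create large discrepancies in the non-coincidence set unless one knows the obstacle problem is nondegenerate near the free boundary. Caffarelli's regularity theory supplies this nondegeneracy in the continuum; its effective discrete counterpart, requiring quantitative lower bounds on $v_n - \gamma_n$ in a neighbourhood of $\partial D$ and, for iDLA, sharp fluctuation bounds of order $O(\log n)$ rather than the naive diffusive $O(\sqrt{n})$, is where the bulk of the technical work lies.
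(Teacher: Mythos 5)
The paper does not actually prove this theorem: it is quoted verbatim as Theorem 1.2 of \citet{LevPerDLA}, and the surrounding text of Section \ref{sec:iDLA} only sets up the obstacle-problem framework (the obstacle \eqref{gamma}, its continuum version \eqref{gamma_r}, and the non-coincidence set $D$). Your first paragraph reproduces exactly that framework and matches the strategy of the cited proof for the divisible sandpile: discrete obstacles built from lattice Green's functions converging to $\gamma$, least superharmonic majorants, and stability of the non-coincidence set. That part is a faithful, if compressed, outline of the real argument.

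The iDLA part of your proposal, however, has a genuine problem. You propose to control $u^{\text{iDLA}}_n$ against the divisible sandpile odometer via ``Jerison--Levine--Sheffield-type'' $O(\log n)$ sup-norm discrepancy bounds. Those logarithmic fluctuation estimates postdate the shape theorem, are far stronger than it, and their proofs take the limit shape as an input; invoking them here is effectively circular. The actual proof in \citet{LevPerDLA} handles iDLA by a separate probabilistic argument in the spirit of Lawler--Bramson--Griffeath (martingales built from Green's functions, a ``thin tentacles'' estimate, and Borel--Cantelli for the inner and outer bounds), not by odometer comparison. Relatedly, you attach the hypothesis $\delta_n\log(n)\to 0$ to the iDLA error term, whereas the theorem as stated imposes it only for $\star=\text{div.sand.}$ (it compensates for the logarithmic behaviour of the potential kernel in $d=2$ in the deterministic convergence of obstacles, not for random fluctuations of iDLA). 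Your closing paragraph correctly identifies nondegeneracy near the free boundary as the technical crux, but as written the iDLA leg of the argument would not go through without replacing the comparison step by an independent probabilistic inner/outer bound.
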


As a consequence all three models have rotational invariant scaling limits hence the limiting shape for the iDLA and divisible sandpile when starting with a pile of $n$ particles at the origin is a ball. 

\section{Discussion and open problems}\label{sec:dis}

In this last section we would like to give some perspectives and open questions.\\
 
\noindent
\textbf{Scaling limit of the odometer in the super- and subcritical case:}
Recall that the odometer $u$ is a solution of
\[
\Delta^{\star} u(x) = s_{\infty}(x) - s(x).
\]
The assumption on the initial configuration $\sum_{x\in \Z^d_n} s(x)=n^d$ ensured that $s_{\infty}\equiv 1$ and therefore 
\[
u = (\Delta^{\star})^{-1} (1-s)
\] 
depends only on the initial distribution of $s$. In the subcritical case $\sum_{x\in \Z^d_n} s(x)<n^d$ what is the distribution of $s_{\infty}$? How are $s_{\infty}$ and $s$ correlated? What is the scaling limit of $u$? For the supercritical case we know that the sandpile will not stabilize, but can we still control it and find a proper rescaling to make the odometer converge to a particular field? \\

\noindent
\textbf{Scaling limit of maxima:}
Concerning the scaling limit of the maxima note that the asymptotics of the mean odometer or expected maximum in Theorem \ref{max_odo_nn} agree with the asymptotics of the expected maximum of a bi-Laplacian model in $\Z^d$ when defining the model on a box $\Lambda_n\subset \Z^d$ of size $n$.
It was proven in Theorem 1 of \citet{CCH2015} that the scaling limit of extrema of bi-Laplacian models are Gumbel distributed for $d\geq 5$ and in \citet{Biltu} that the maximum coincides with the one of the GFF for $d=2,3$, see Corollary 2.2. The case $d=4$ was very recently solved by \citet{mem4d} where the author showed in Theorem 1.1. that the scaling limit is a randomly shifted Gumbel distribution. We conjecture that for $d>2\alpha$ the scaling limit is a Fr\'echet type distribution and at the critical point $d=2\alpha$ it is a randomly shifted Fr\'echet.\\

\noindent
\textbf{Limiting shapes for long-range divisible sandpiles:} In \citet{frometa:jara} the authors consider a truncated long-range divisible sandpile models in $\Z^2$.  Start with an initial configuration on $\Z^2$ with finite support and consider a divisible sandpile model. Topple when the mass $s(x)\geq 1$ and redistribute not only to nearest neighbours but also to further away neighbours according to a long-range random walk with $\alpha \in (1,2)$ truncated to stay within a fixed range $M$. The authors obtain the scaling limit of the odometer function by solving a corresponding obstacle problem.  Can one prove a similar result as Theorem \ref{theo_shape} for the (truncated) long-range divisible sandpile? What is the limiting shape? We conjecture that there exist a range of parameters $\alpha \in (\alpha_1, \alpha_2)$ where the limiting shape is not a ball.\\

\noindent
\textbf{Divisible sandpiles on different graphs:}
What are scaling limits of divisible sandpile models on different graphs? We know from \citet{LMPU} that for any finite graph as long as $\sum_{x\in \Z^d_n} s(x)\leq n^d$ then the divisible sandpile configuration will stabilize. In the infinite graph case for an initial i.i.d. configuration $s$, the divisible sandpile will stabilize if $\mathbb{E}(s(o))<1$. What about $\Z^d$ or other lattices or supercritical percolation (see also \citet{IDLA_perco})? We conjecture that the results will still hold true but require a much finer analysis and 
control over the convergence of  Green's functions. 


\bibliographystyle{plainnat}
\bibliography{literaturDSM}

\end{document}